\documentclass[11pt,a4paper]{article}

\usepackage{amsmath,amssymb,amsthm,amsfonts}
\usepackage{mathtools}
\usepackage[margin=1.25in]{geometry}
\usepackage[colorlinks=true,linkcolor=blue,citecolor=blue]{hyperref}
\usepackage{algorithm}       
\usepackage{algpseudocode}   
\usepackage{cleveref}
\usepackage{booktabs}
\usepackage{array}
\usepackage{enumitem}
\usepackage{xcolor}
\usepackage{microtype}
\usepackage{pgfplots}        
\pgfplotsset{compat=1.18}
\usepackage{longtable}       
\usepackage{authblk}         

\newtheorem{theorem}{Theorem}[section]
\newtheorem{lemma}[theorem]{Lemma}
\newtheorem{proposition}[theorem]{Proposition}
\newtheorem{corollary}[theorem]{Corollary}
\newtheorem{definition}[theorem]{Definition}
\newtheorem{assumption}{Assumption}   
\newtheorem{remark}[theorem]{Remark}
\newtheorem{example}[theorem]{Example}

\crefname{assumption}{Assumption}{Assumptions}
\Crefname{assumption}{Assumption}{Assumptions}
\crefname{theorem}{Theorem}{Theorems}
\crefname{lemma}{Lemma}{Lemmas}
\crefname{proposition}{Proposition}{Propositions}
\crefname{corollary}{Corollary}{Corollaries}
\crefname{definition}{Definition}{Definitions}
\crefname{remark}{Remark}{Remarks}
\crefname{example}{Example}{Examples}
\crefname{algorithm}{Algorithm}{Algorithms}
\Crefname{algorithm}{Algorithm}{Algorithms}

\newcommand{\X}{\mathcal{X}}
\newcommand{\G}{\mathcal{G}}
\newcommand{\Omn}{\Omega_N}
\newcommand{\Oms}{\Omega^*}
\newcommand{\Omt}{\Omega^{\mathrm{trans}}}
\newcommand{\calF}{\mathcal{F}}
\newcommand{\R}{\mathbb{R}}
\newcommand{\E}{\mathbb{E}}
\newcommand{\Prob}{\mathbb{P}}
\newcommand{\phist}{\varphi^*}
\newcommand{\Neps}{\mathcal{N}_\varepsilon}
\newcommand{\Akp}{A_k^{+}}
\DeclareMathOperator*{\argmax}{arg\,max}
\DeclareMathOperator{\topN}{top\text{-}N}
\DeclareMathOperator*{\argmin}{arg\,min}
\newcommand{\evals}{\mathcal{E}}
\newcommand{\cache}{\mathcal{C}}

\begin{document}

\title{\bfseries
  Closed-Loop Generative Selection:\\[4pt]
  Convergence, Memory, and Noisy Oracles}

\author[1,3]{Kostantin Fackeldey}
\author[1,2]{Christof Sch\"utte}
\affil[1]{Zuse Institute Berlin, Takustra\ss e~7, 14195 Berlin, Germany}
\affil[2]{Department of Mathematics and Computer Science,
  Freie Universit\"at Berlin, Arnimallee~6, 14195 Berlin, Germany}
\affil[3]{Institute for Mathematics, Technical University Berlin,
Fasanenstr 89,10623 Berlin, Germany}  
\date{June 2026}
\maketitle


   \begin{abstract}
   Closed-loop generative selection has become a workhorse of computational
   drug discovery: a learned generative model proposes candidate molecules, a
   fitness oracle scores them, the best are kept, and the model is retrained on
   this elite set before the next round.  Despite its wide use, the method has
   lacked a rigorous convergence theory, largely because retraining the model
   each round breaks the Markov property on which classical
   evolutionary-algorithm analysis relies.

   We develop a self-contained theory of convergence and expected running time
   for this class of algorithms.  By recovering a Markov structure on an
   enlarged state space, we show that elitism makes the search absorbing, and
   we prove almost-sure convergence together with a runtime bound that
   decomposes the search into the time spent escaping each fitness level.  We
   then analyse the role of the model's memory---how much of the past it is
   trained on.  When learning improves steadily with more data, deeper memory
   never hurts; when it does not, an exit-time analysis pinpoints the optimal
   memory depth and shows that excess memory can actually slow convergence.
   The theory extends to multi-objective search and to noisy oracles: we
   quantify how many repeated evaluations certify progress under light-tailed
   noise, and how robust estimators restore guarantees under heavy tails.
   Recast in terms of oracle evaluations---the true bottleneck in drug
   design---the analysis yields a concrete, evaluation-minimal strategy.  A
   reproducible study confirms the predictions, including the surprising cost
   of excess memory.  We close with three open problems. 
   \end{abstract}

\medskip
\noindent\textbf{Keywords:} generative selection; elitist evolutionary
algorithms; absorbing Markov chains; expected hitting times; fitness-level
method; memory kernels; runtime analysis; noisy fitness evaluation;
heavy-tailed estimation; multi-objective optimisation.

\smallskip
\noindent\textbf{MSC 2020:} 60J10; 68Q25; 68W20; 90C26; 90C40; 62L05.
\newpage
\tableofcontents
\newpage

\section{Introduction}
\label{sec:intro}

\paragraph{Context and motivation.}
The discovery of novel molecular entities---candidate drugs, chemical probes,
functional materials---is increasingly driven by closed-loop workflows in
which a machine-learning generative model proposes candidate structures, a
fitness oracle evaluates them, and the model is retrained on the accumulated
data before the next round begins~\cite{Secker2025,Gorgulla2020,Cecchin2026}.
The chemical spaces to be searched are astronomically large: libraries of
make-on-demand compounds routinely exceed $10^{10}$ molecules, and the space
of synthetically accessible drug-like molecules is estimated at $10^{12}$ or
more.  No enumerative or purely random strategy can cover such spaces; the
closed loop is the only practical means to concentrate effort on the most
promising regions.

At its core, every such loop instantiates the same abstract pattern: an
elite pool of $N$ molecules proposes $M$ candidates through a generative
model $G_t$, the candidates are scored against a fitness functional, the
best $N$ of the old pool and the new candidates are retained under an
elitist rule, and $G_t$ is adapted---retrained, fine-tuned, or otherwise
updated---on the resulting elite pool.  We call this pattern \emph{generative
selection}.  The adaptivity of $G_t$ is what distinguishes generative
selection from a classical evolutionary algorithm with a fixed mutation
kernel, and it is the source of both the method's power and its analytical
difficulty.  The generative model learns from the history of elite pools,
concentrating future proposals on promising regions; but analysing this
learning dynamics demands tools that go beyond the standard Markov-chain
theory of fixed-kernel evolutionary algorithms.

\paragraph{Questions addressed.}
We ask the classical questions of randomised-search-heuristic theory in the
form they take for an adaptive generative loop.  Does the algorithm reach an
$\varepsilon$-optimal molecule almost surely, and how many rounds does that
require?  How does the \emph{memory} of the generative model---how much of
the past it is trained on---change the answer?  And, because in drug design
the binding resource is rarely the number of rounds but the number of oracle
evaluations, and because those evaluations are noisy, how many evaluations
are needed, and how does the analysis change when the oracle returns
$\varphi(x)$ corrupted by noise---whether sub-Gaussian or heavy-tailed?

\paragraph{Summary of contributions and roadmap.}
The paper is built from first principles and is self-contained.  Its
contributions fall into two broad groups.

The first group is \emph{oracle-agnostic}: it holds whether or not the
oracle is noisy.  We begin with the correct Markov formulation---because
$G_t$ is retrained each round, the population process $(P_t)$ is in general
not Markov, and we work instead with the augmented chain $(P_t,G_t)$ and
the history process---and establish the absorbing structure that elitism
imposes.  On this foundation we prove almost-sure convergence and a Layered
Hitting-Time Bound that decomposes the expected number of rounds into
per-level escape probabilities (Section~\ref{sec:baseline}).  We then let
the kernel \emph{learn}: for full memory and for finite $m$-step memory the
relevant history process is again Markov, and the hitting-time bound
acquires memory-dependent escape probabilities (Section~\ref{sec:finite}).
Under a natural monotone-learning condition these order into a memory
hierarchy in which more memory never hurts; when that condition fails---as
it does for a naive windowed learner---we replace the hierarchy by an
exit-time theory resolved within each fitness level, which remains valid, is
tighter, and locates the optimal memory depth at the first peak of a
one-dimensional learning profile (Section~\ref{sec:nonmonotone}).  We extend
every result to multi-objective fitness through the hypervolume indicator
(Section~\ref{sec:multiobjective}), and we recast the whole theory in the
currency of oracle evaluations, where the proposal budget $M$ traces a
Pareto frontier between evaluations and rounds whose evaluation-optimal
corner is sequential ($M=1$) operation (Section~\ref{sec:control}).

The second group concerns the \emph{noisy oracle}
(Section~\ref{sec:ctrl-noise}).  When the noise is sub-Gaussian, a molecule
must be certified by averaging repeated queries and bounding the sample
mean; this inflates the evaluation cost by a price-of-noise factor
proportional to $\sigma^2/\Delta^2$ and turns the almost-sure guarantee into
a high-probability one.  When the noise is heavy-tailed, the sample mean is
the wrong tool, and we show how robust-mean estimators (median-of-means,
trimmed means) and, in the extreme, distribution-free sign tests recover
correctness, at a query cost that degrades gracefully with the tail.

\paragraph{Related work and what is new.}
Our methods draw on a long line of work.  The fitness-level method originates
with Wegener~\cite{Wegener2001} and is systematised by
Jansen~\cite{Jansen2013}, with tail-bounded refinements due to
Lehre~\cite{Lehre2010}; drift analysis for hitting times is developed by He
and Yao~\cite{HeYao2001}, Doerr, Johannsen and Winzen~\cite{Doerr2012}, and
Lehre and Yao~\cite{LehreYao2012}, and lower bounds by
Sudholt~\cite{Sudholt2013}.  Estimation-of-distribution algorithms share the
adaptive-generative flavour~\cite{HauschildPelikan2011}, and convergence of
canonical genetic algorithms is treated by Rudolph~\cite{Rudolph1994} and
Schmitt~\cite{Schmitt2004}.  General state-space stability follows Meyn and
Tweedie~\cite{MeynTweedie1993} and finite-chain absorption Kemeny and
Snell~\cite{KemenySnell1960}; the memory-dependent analysis borrows from
online learning~\cite{CesaBianchi2006} and Markov decision
processes~\cite{Puterman1994}, and the heavy-tailed estimation from
Catoni~\cite{Catoni2012} and Lugosi and
Mendelson~\cite{LugosiMendelson2019}.  Four literatures are especially close
to the present work.  First, the runtime analysis of evolutionary algorithms
under \emph{noisy} fitness evaluation---populations as noise
filters~\cite{GiessenKotzing2016}, population-based optimisation of noisy
functions~\cite{DangLehre2015}, and compact genetic algorithms under extreme
Gaussian noise~\cite{Friedrich2017}---parallels our treatment of noisy
oracles, though there the noise corrupts a fixed operator rather than an
adaptive generative model.  Second, our finite-memory window kernels are close
in spirit to \emph{estimation-of-distribution algorithms}, whose runtime
theory has matured through upper~\cite{Witt2019} and lower~\cite{KrejcaWitt2020}
bounds for the univariate marginal distribution algorithm and
significance-based variants~\cite{DoerrKrejca2018}.  Third, replicating noisy
evaluations to certify a level crossing is an instance of \emph{pure
exploration} (fixed-confidence best-arm identification) in stochastic
bandits~\cite{EvenDar2006,Audibert2010,Kaufmann2016}, which is where our
sample-complexity scaling $\sigma^2/\Delta^2$ originates.  Fourth, the
costly-oracle regime connects to \emph{multi-fidelity} surrogate and Bayesian
optimisation~\cite{Forrester2007,Kandasamy2017}, where cheap low-fidelity
evaluations guide expensive high-fidelity ones.  The application of these tools to
elitist evolutionary algorithms with adaptive, possibly noisy generative
models is, to our knowledge, new.  In particular, the explicit analysis of
finite-memory windows as time-homogeneous Markov chains, the
non-monotone exit-time theory that drops the monotone-learning condition,
the evaluation-minimal control strategy, and the treatment of heavy-tailed
noise in this adaptive-loop context are, we believe, original contributions.

\paragraph{Structure of the paper.}
After setting up the mathematical framework (Section~\ref{sec:framework}),
we develop the oracle-agnostic theory---convergence and the layered
hitting-time bound (Section~\ref{sec:baseline}), learning kernels with
memory and the memory hierarchy (Section~\ref{sec:finite}), exit times
without monotone learning (Section~\ref{sec:nonmonotone}), the
multi-objective extension, and the cost of oracle evaluations
(Section~\ref{sec:control})---and then turn to the noisy oracle, treated
under sub-Gaussian (Section~\ref{sec:ctrl-noise}) and then heavy-tailed
noise (Section~\ref{sec:ctrl-heavy}).  We close with concluding remarks
centred on three open problems (Section~\ref{sec:concl}), and an appendix
collecting the notation.

\section{Mathematical Framework}
\label{sec:framework}

\subsection{Chemical space, fitness, and the algorithm}

Let $\X$ be a finite set, the \emph{chemical space}, with $|\X|$
possibly exceeding $10^{12}$.  A scalar \emph{fitness functional}
\[
  \varphi\colon\X\to\R,\qquad
  \phist = \max_{x\in\X}\varphi(x),
\]
encodes the design objective (higher is better).  For a tolerance
$\varepsilon\ge0$ define the \emph{target set}
\[
  \Neps = \{x\in\X : \varphi(x)\ge\phist-\varepsilon\}.
\]
Multi-objective fitness, via the hypervolume indicator, is treated in
Section~\ref{sec:multiobjective}.

A \emph{population} is an $N$-element subset of $\X$; the population
space is
\[
  \Omn = \{P\subseteq\X : |P|=N\}.
\]
Define the \emph{elite fitness} $f(P)=\max_{x\in P}\varphi(x)$.

\paragraph{The generative selection algorithm.}

\begin{algorithm}[ht]
\caption{Generative Selection (one round at time $t$)}\label{alg:gen-sel}
\begin{algorithmic}[1]
\Require elite pool $P_t\in\Omn$; generative model $G_t$; proposal budget $M$
\State \textbf{Propose:} sample $C_t=\{y_1,\dots,y_M\}$ i.i.d.\ from $G_t(\cdot\mid P_t)$
\State \textbf{Score:} evaluate $\varphi(x)$ for every $x\in C_t$
\State \textbf{Select:} $P_{t+1}\leftarrow\topN$ of $P_t\cup C_t$ under $\varphi$ (ties broken arbitrarily)
\State \textbf{Adapt:} form $G_{t+1}$ from the available history
\end{algorithmic}
\end{algorithm}

\medskip
Here $G_t(\cdot\mid P_t)$ is a probability distribution on $\X$
conditioned on the current pool, and $\topN$ returns the $N$ molecules
of highest fitness in $P_t\cup C_t$.  The three settings studied in this
paper differ only in Step~4---how $G_{t+1}$ depends on the past:

\begin{itemize}[nosep]
\item \emph{Static / single-step} (Sections~\ref{sec:static}
  and~\ref{sec:layered}):
  $G_t=G(P_t)$ depends only on the current pool.
\item \emph{Full memory} (Section~\ref{sec:full}):
  $G_t=F(P_0,\dots,P_t)$ depends on the entire history.
\item \emph{Finite $m$-step memory} (Section~\ref{sec:finite}):
  $G_t=G(P_{t-m+1},\dots,P_t)$ depends on the last $m$ pools.
\end{itemize}

\subsection{Elitism and the absorbing structure}

\begin{assumption}[Elitism]\label{ass:elitism}
  Step~3 retains the top-$N$ molecules of $P_t\cup C_t$; consequently
  $f(P_{t+1})\ge f(P_t)$ almost surely for every $t\ge0$.
\end{assumption}

\begin{definition}[Fitness levels]\label{def:levels}
  Let $v_1<v_2<\dots<v_L=\phist$ be the distinct values of $\varphi$ on
  $\X$, and set $L_j=\{x\in\X:\varphi(x)=v_j\}$.  Let
  \[
    K=K(\varepsilon)=\min\{\,j : v_j\ge\phist-\varepsilon\,\}
  \]
  be the index of the first level meeting the $\varepsilon$-target, so
  that $\Neps=\bigcup_{j\ge K}L_j$.  We write $K(\varepsilon)$ to stress
  the dependence on the tolerance; every layered bound below sums to
  $K(\varepsilon)-1$, which is how $\varepsilon$ enters its right-hand side
  (see Remark~\ref{rem:eps-levels}).  For a population $P\in\Omn$ define
  the \emph{level}
  \[
    \ell(P)=\min\bigl(K,\ \max\{\,j : f(P)=v_j\,\}\bigr)\in\{1,\dots,K\},
  \]
  i.e.\ the fitness rank of the best molecule in $P$, capped at the
  target level $K$.  For $k<K$ write $f_k=v_k$ and
  \[
    \Akp=\{y\in\X:\varphi(y)>f_k\}=\bigcup_{j>k}L_j
  \]
  for the set of molecules strictly fitter than level $k$.
\end{definition}

\begin{definition}[Absorbing and transient sets, hitting time]
\label{def:absorbing}
  Let
  \[
    \Oms=\{P\in\Omn:\ell(P)=K\}=\{P:f(P)\ge\phist-\varepsilon\},
    \qquad
    \Omt=\Omn\setminus\Oms,
  \]
  and define the \emph{hitting time}
  $\tau_\varepsilon=\inf\{t\ge0:P_t\in\Oms\}$.
\end{definition}

\begin{lemma}[Monotonicity and absorption]\label{lem:monotone}
  Under Assumption~\ref{ass:elitism}, $\ell(P_t)$ is non-decreasing in
  $t$ almost surely, and $\Oms$ is absorbing: if $P_t\in\Oms$ then
  $P_{s}\in\Oms$ for all $s\ge t$.
\end{lemma}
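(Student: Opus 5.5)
The plan is to reduce both claims to the elitism inequality $f(P_{t+1})\ge f(P_t)$ of Assumption~\ref{ass:elitism} and to the monotonicity of the map $P\mapsto \ell(P)$ in $f(P)$. No probabilistic structure beyond ``almost surely'' is needed. Everything happens on the full-measure event
\[
E=\bigcap_{t\ge0}\{f(P_{t+1})\ge f(P_t)\},
\]
which has probability one as a countable intersection of almost-sure events.

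First, I will show that $\ell(P)$ is a non-decreasing function of $f(P)$. Since $f(P)=\max_{x\in P}\varphi(x)$ is a value of $\varphi$, there is a unique index $j$ with $f(P)=v_j$, because the $v_j$ are distinct. Hence $\max\{j:f(P)=v_j\}$ is simply that $j$. The map $v_j\mapsto j$ is strictly increasing because $v_1<\dots<v_L$, and capping by $\min(K,\cdot)$ preserves monotonicity. So $f(P)\le f(P')$ implies $\ell(P)\le\ell(P')$. On $E$ this gives $\ell(P_{t+1})\ge\ell(P_t)$ for every $t$, and induction yields $\ell(P_s)\ge\ell(P_t)$ for all $s\ge t$.

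Second, for absorption I will use the identification $\Oms=\{P:\ell(P)=K\}$ from Definition~\ref{def:absorbing}. If $P_t\in\Oms$, then $\ell(P_t)=K$. By the first step, $\ell(P_s)\ge K$ for $s\ge t$, and since $\ell\le K$ always, $\ell(P_s)=K$, so $P_s\in\Oms$. For completeness I will also check the second description of $\Oms$: $\ell(P)=K$ holds iff $f(P)=v_j$ with $j\ge K$, iff $f(P)\ge v_K$, iff $f(P)\ge\phist-\varepsilon$, using the definition of $K(\varepsilon)$ as the first level with $v_j\ge\phist-\varepsilon$.

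The only step requiring real care is the almost-sure quantifier. The inequality is assumed for each $t$ separately, so I must pass to the intersection over all $t$ before inducting. Countability of the time index makes this immediate. No obstacle arises from $G_t$ or from memory, since elitism is a pathwise property of Step~3 regardless of how the candidates were generated. I expect the argument to be short and entirely deterministic on $E$.
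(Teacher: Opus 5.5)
Your proof is correct and matches the paper's argument: elitism gives $f(P_{t+1})\ge f(P_t)$, $\ell$ is a non-decreasing function of $f$ capped at $K$, and absorption follows. The paper argues absorption through $f(P_s)\ge\phist-\varepsilon$ rather than through the cap $\ell\le K$, but you check both descriptions of $\Oms$. Your explicit passage to the countable intersection of almost-sure events is a small piece of rigor that the paper leaves implicit.
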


\begin{proof}
  By Assumption~\ref{ass:elitism}, $f(P_{t+1})\ge f(P_t)$, and
  $\ell(\cdot)$ is a non-decreasing function of $f(\cdot)$ (capped at
  $K$); hence $\ell(P_{t+1})\ge\ell(P_t)$.  If $P_t\in\Oms$ then
  $f(P_t)\ge\phist-\varepsilon$, so $f(P_s)\ge\phist-\varepsilon$ and
  $P_s\in\Oms$ for all $s\ge t$.
\end{proof}

This monotonicity is the bedrock of the entire analysis.  Because the
elite fitness can never decrease, the process is trapped once it enters
the target set, and the hitting time decomposes into a sum of independent
(or at least conditionally independent) sojourns spent at each fitness
level.  The challenge is to bound the duration of each sojourn, and this
is where the adaptive nature of $G_t$ enters.

\subsection{The Markov-chain formulation}
\label{sec:markov}

A subtlety governs the entire analysis: although Step~3 looks like a
Markov transition, the process $(P_t)$ \emph{alone} is generally not a
Markov chain, because the proposal law $G_t$ in Step~1 may depend on the
whole past.  This observation is crucial: standard evolutionary-algorithm
theory, which relies on the Markov property of the population process,
does not directly apply when the generative model is retrained each round.
We therefore record the correct formulations for the three regimes
studied in this paper.

\begin{assumption}[Uniform positivity]\label{ass:positivity}
  There exists $\alpha>0$ such that, for every round $t$, every history
  or window on which $G_t$ may depend, and every $x\in\X$,
  \[
    G_t(x\mid P_t)\ge\alpha.
  \]
  Equivalently $G_t=(1-\alpha|\X|)\,G_t^{\mathrm{learned}}+\alpha|\X|\,U$
  with $U$ uniform on $\X$, i.e.\ the model mixes in a floor of uniform
  exploration.  (Necessarily $\alpha\le1/|\X|$.)
\end{assumption}

This uniform-positivity condition is mild but essential.  It guarantees
that no matter how badly the generative model has learned, every molecule
retains at least an $\alpha$ probability of being proposed, so the
algorithm can never get permanently stuck.  In practice this floor can be
achieved by a softmax temperature, a mixing coefficient, or simply by
retaining a small uniform component in the proposal distribution.

\paragraph{Static kernel: $(P_t)$ is Markov.}
If $G_t=G(P_t)$ depends only on $P_t$, then given $P_t$ the law of
$C_t$, hence of $P_{t+1}=\topN(P_t\cup C_t)$, is determined; thus
$(P_t)_{t\ge0}$ is a time-homogeneous Markov chain on $\Omn$ with kernel
\[
  K(P,A)=\Prob\bigl(\topN(P\cup C)\in A\bigr),
  \qquad C\sim G(\cdot\mid P)^{\otimes M}.
\]

\paragraph{Adaptive kernel: augmented state.}
When $G_t$ is retrained on the history, $(P_t)$ is not Markov.  We
enlarge the state.

\begin{definition}[Augmented state chain]\label{def:augmented}
  Let $\G$ be the space of probability kernels on $\X$.  Suppose the
  adaptation in Step~4 is a deterministic update
  $U\colon\G\times\Omn\to\G$, $G_{t+1}=U(G_t,P_{t+1})$.  Define the
  \emph{augmented state} $Z_t=(P_t,G_t)\in\Omn\times\G$.
\end{definition}

\begin{proposition}[Markov property of the augmented chain]
\label{prop:markov}
  Under Definition~\ref{def:augmented}, $(Z_t)_{t\ge0}$ is a
  time-homogeneous Markov chain:
  $\Prob(Z_{t+1}\in\cdot\mid Z_0,\dots,Z_t)
   =\Prob(Z_{t+1}\in\cdot\mid Z_t)$ a.s.
\end{proposition}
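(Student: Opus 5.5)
We will prove the Markov property by writing one round of Algorithm~\ref{alg:gen-sel} as a fixed measurable map of the current augmented state and fresh randomness that is independent of the past. The natural filtration is $\calF_t=\sigma(Z_0,\dots,Z_t)$, enlarged if needed to include all proposals $C_0,\dots,C_{t-1}$. The structural assumption we make explicit is the one implicit in Step~1. Conditionally on $\calF_t$, the candidate set $C_t$ is drawn i.i.d.\ from $G_t(\cdot\mid P_t)$. This means there are uniform random variables $\xi_t=(\xi_{t,1},\dots,\xi_{t,M})$, independent of $\calF_t$ and identically distributed in $t$, together with a measurable sampling map $S$, such that $C_t=S(G_t,P_t,\xi_t)$. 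On a finite $\X$ one can take $S$ to be the inverse-CDF map for each of the $M$ coordinates.

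Given this, the key steps are as follows.

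\textbf{Step 1: the update map.} We define the deterministic map
\[
\Psi\bigl((P,G),\xi\bigr)=\Bigl(P',\,U\bigl(G,P'\bigr)\Bigr),\qquad P'=\topN\bigl(P\cup S(G,P,\xi)\bigr).
\]
The rule $\topN$ must use a fixed deterministic tie-breaking rule. If ties are broken at random, we absorb that randomness into $\xi_t$. By Steps~1--4 of the algorithm and Definition~\ref{def:augmented}, this gives $Z_{t+1}=\Psi(Z_t,\xi_t)$. Note that $\Psi$ does not depend on $t$.

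\textbf{Step 2: the standard random-mapping lemma.} For bounded measurable $h$ we compute
\[
\E\bigl[h(Z_{t+1})\mid\calF_t\bigr]=\int h\bigl(\Psi(Z_t,\xi)\bigr)\,\mu(d\xi)=:(Qh)(Z_t).
\]
This uses the freezing lemma: $Z_t$ is $\calF_t$-measurable and $\xi_t\sim\mu$ is independent of $\calF_t$. The right-hand side is a function of $Z_t$ alone. Conditioning further on $\sigma(Z_t)\subseteq\calF_t$ via the tower property then yields $\Prob(Z_{t+1}\in\cdot\mid Z_0,\dots,Z_t)=\Prob(Z_{t+1}\in\cdot\mid Z_t)=Q(Z_t,\cdot)$ a.s. Time-homogeneity follows because neither $\Psi$ nor $\mu$ depends on $t$.

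\textbf{Main obstacle: measurability on $\G$.} The space $\G$ of probability kernels on $\X$ must carry a $\sigma$-algebra under which $U$ and $S$ are jointly measurable; otherwise $Q$ is not a genuine kernel. For finite $\X$ we identify each $G(\cdot\mid P)$ with a vector in the simplex. Since $\Omn$ is finite, $\G$ becomes a subset of a finite product of simplices, a Polish space with its Borel $\sigma$-algebra.

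We then assume, as a standing mild hypothesis, that $U$ is Borel measurable. The inverse-CDF map $S$ is piecewise constant with Borel pieces in $(G,\xi)$, and $\topN$ acts on a finite set. Hence $\Psi$ is measurable and $Q$ is a Markov kernel on $\Omn\times\G$.

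Finally, we note that the argument is exactly where retraining enters. $(P_t)$ alone fails to be Markov because $G_t$ carries information about the past, whereas $Z_t$ contains $G_t$ and so closes the recursion.
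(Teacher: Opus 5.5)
Your proof is correct and follows the same idea as the paper's: given $Z_t$, the proposals $C_t$ are drawn afresh from $G_t(\cdot\mid P_t)^{\otimes M}$, so $P_{t+1}=\topN(P_t\cup C_t)$ and $G_{t+1}=U(G_t,P_{t+1})$ have a conditional law that depends on the past only through $Z_t$. Your random-mapping form $Z_{t+1}=\Psi(Z_t,\xi_t)$, combined with the freezing lemma, makes this rigorous and spells out hypotheses the paper's one-line argument leaves tacit: randomness independent of $\calF_t$, tie-breaking in $\topN$ that does not depend on the history, and Borel measurability of $U$ on $\G$ viewed as a finite product of simplices.
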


\begin{proof}
  Given $Z_t=(P_t,G_t)$, the law of $C_t\sim G_t(\cdot\mid P_t)^{\otimes
  M}$ is determined, hence so is that of $P_{t+1}=\topN(P_t\cup C_t)$.
  Since $G_{t+1}=U(G_t,P_{t+1})$ is a deterministic function of $Z_t$ and
  $P_{t+1}$, the conditional law of $Z_{t+1}$ depends on the past only
  through $Z_t$.
\end{proof}

The history process formulation, needed when the update is not captured
by a fixed-dimensional $G_t$, is developed in Section~\ref{sec:full}.
All escape probabilities below are defined as infima over the relevant
conditioning information (current pool, history, or window); this makes
the hitting-time arguments valid simultaneously for the Markov marginal
and for the adaptive process.

\subsection{A discrete test problem and the learning kernel}
\label{sec:num-setup}

Throughout the paper we illustrate the theory with a transparent,
reproducible numerical study.  The test problem is chosen not for
realism---it is far simpler than true drug design---but because it
isolates precisely the phenomena the theory predicts, offering a
controlled ``laboratory'' in which to verify the mathematical claims.

\paragraph{Space and oracle.}
Let $\X=\{0,1\}^n$ be the set of binary strings of length $n$, so that
$|\X|=2^n$ plays the role of the astronomically large chemical space.
The fitness oracle is the classical \textsc{OneMax} functional
\[
  \varphi(x)=\sum_{i=1}^n x_i,\qquad \phist=n,
\]
whose unique optimum is the all-ones string.  The fitness levels of
Definition~\ref{def:levels} are the Hamming weight classes
$L_k=\{x:\varphi(x)=k\}$ with $|L_k|=\binom{n}{k}$, and
$\Akp=\{y:\varphi(y)>k\}$ has cardinality $\sum_{j>k}\binom{n}{j}$.  This
is the standard pedagogical landscape for fitness-level
analysis~\cite{Wegener2001,Jansen2013}; here it serves purely as a
controllable stand-in for a molecular objective.

\paragraph{Generative selection with a learning kernel.}
We instantiate $G_t$ as a univariate marginal (estimation-of-distribution)
model~\cite{HauschildPelikan2011}: the kernel maintains a per-coordinate
parameter vector $\theta_t\in[0,1]^n$ and proposes strings
coordinatewise independently,
\[
  G_t(y\mid\cdot)=\prod_{i=1}^n
     \theta_{t,i}^{\,y_i}(1-\theta_{t,i})^{\,1-y_i},
  \qquad
  \theta_{t,i}=\alpha'+(1-2\alpha')\,\widehat\theta_{t,i},
\]
where $\widehat\theta_{t,i}$ is the empirical frequency of bit $i$ over the
elite pools retained in memory and $\alpha'\in(0,\tfrac12)$ is a fixed
floor.  The floor realises the uniform-positivity hypothesis
(Assumption~\ref{ass:positivity}): every coordinate factor is at least
$\alpha'$, so $G_t(y\mid\cdot)\ge(\alpha')^{n}=:\alpha>0$ for all
$y\in\X$.  Selection keeps the top-$N$ strings of $P_t\cup C_t$, so
elitism (Assumption~\ref{ass:elitism}) holds by construction.  The three memory regimes of the paper
correspond exactly to how $\widehat\theta_t$ is formed: the
\emph{single-step} kernel ($m=1$) uses only the current pool, the
\emph{$m$-step} kernel averages the last $m$ pools
(Definition~\ref{def:m-kernel}), and the \emph{full-memory} kernel
(Definition~\ref{def:full}) averages all past pools.  A non-adaptive
\emph{uniform} baseline fixes $\theta_{t,i}\equiv\tfrac12$
($G_t\equiv U$), giving $q_k^0=\alpha|\Akp|$ as in
Corollary~\ref{cor:chain}.

Because both standing hypotheses (Assumptions~\ref{ass:elitism}
and~\ref{ass:positivity}) hold, Theorem~\ref{thm:base-conv} guarantees
almost-sure convergence with a
geometric tail, and Theorem~\ref{thm:base-htb} bounds the expected hitting time
by the layered sum $\sum_k 1/p_k$.  The simulations below confirm both
and probe the role of memory depth.

\paragraph{Reproducibility.}

All curves were produced by a dependency-free Python implementation of
\Cref{alg:gen-sel} (about forty lines using only the standard-library
\texttt{random} module), with fixed seeds; the uniform, single-step,
$m$-step, and full-memory kernels differ only in which elite pools enter
$\widehat\theta_t$.  Figures~\ref{fig:convergence} and~\ref{fig:optdepth} report empirical
medians and means over independent runs; Figure~\ref{fig:hierarchy} is the
closed-form prediction of Proposition~\ref{prop:sojourn-sat}.  Together they confirm
the three qualitative predictions of the theory: almost-sure convergence
of every positive-floor learning kernel, the dominance of learning
kernels over uniform exploration, and the monotone benefit of memory
\emph{exactly when} the monotone-learning condition holds---with a
quantifiable optimal depth otherwise.

\section{Convergence and Expected Hitting Times}
\label{sec:baseline}

This section establishes the basic convergence and expected-hitting-time
theory.  We treat the static and single-step kernels
(Sections~\ref{sec:static} and~\ref{sec:layered}) and then extend the
layered bound to multi-objective fitness via the hypervolume indicator
(Section~\ref{sec:multiobjective}); memory-dependent kernels are the
subject of Section~\ref{sec:finite}.

The core idea is simple: because the elite fitness never decreases, the
algorithm must climb through a finite number of fitness levels to reach
the target.  At each level, the generative model proposes candidates, and
if any of those candidates belongs to a strictly higher level, elitism
pulls the population up.  The expected time spent at each level is
inversely proportional to the probability of such an ``escape.''  The
art lies in bounding these escape probabilities---and, when the generative
model learns, in understanding how they improve with experience.

\subsection{Escape probabilities and almost-sure convergence}
\label{sec:static}

In this subsection and the next, $G_t=G(P_t)$ (static), or more generally
$G_t$ is a possibly time-dependent kernel depending only on the current
pool; all bounds are stated through infima that absorb any such
dependence.

\begin{definition}[Single-proposal escape probability]\label{def:qk}
  For $k=1,\dots,K-1$ define
  \[
    q_k=\inf_{t\ge0}\ \inf_{\substack{P\in\Omn\\\ell(P)=k}}
        G_t\!\bigl(\Akp\mid P\bigr),
    \qquad
    p_k=1-(1-q_k)^M .
  \]
  Here $q_k$ is the worst-case probability that a single proposal from a
  level-$k$ pool is strictly fitter than level $k$, and $p_k$ is the
  corresponding probability that at least one of the $M$ i.i.d.\
  proposals escapes level $k$.
\end{definition}

\begin{remark}
  The escape probability depends on the \emph{population} $P$, not on any
  individual molecule of $P$: there is no optimisation over a chosen
  elite member.  Under Assumption~\ref{ass:positivity},
  \[
    q_k\ \ge\ \alpha\,|\Akp|\ >\ 0
    \qquad\text{for all }k=1,\dots,K-1,
  \]
  since $G_t(\Akp\mid P)=\sum_{y\in\Akp}G_t(y\mid P)\ge\alpha|\Akp|$.
\end{remark}

\paragraph{Almost-sure convergence and a geometric tail.}

Even with the crudest possible bound---using only the uniform exploration
floor $\alpha$---we can already establish that the algorithm converges
almost surely and, moreover, that the tail of the hitting time decays
geometrically.

\begin{theorem}[Convergence and geometric tail]\label{thm:base-conv}
  Under Assumptions~\ref{ass:elitism} and~\ref{ass:positivity}, for
  every initial pool $P_0$,
  \[
    \Prob(\tau_\varepsilon>T)\le(1-\beta)^T\quad(T\ge0),
    \qquad\text{where }\beta=1-(1-\alpha)^M>0,
  \]
  and consequently $\Prob(\tau_\varepsilon<\infty)=1$ and
  $\E[\tau_\varepsilon]\le1/\beta$.
\end{theorem}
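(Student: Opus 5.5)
The plan is to show that, from any non-target state, each round enters $\Oms$ with conditional probability at least $\beta$, whatever the history. Then chain these one-step bounds with the tower property. Let $\calF_t$ be the $\sigma$-algebra generated by everything up to the start of round $t$: the pools $P_0,\dots,P_t$, the kernels $G_0,\dots,G_t$, and any auxiliary randomness. Working with $\calF_t$ rather than $P_t$ alone means we never need the Markov property of $(P_t)$. We only need that, given $\calF_t$, the proposals $C_t$ are i.i.d.\ from $G_t(\cdot\mid P_t)$. This is exactly how Step~1 of Algorithm~\ref{alg:gen-sel} is specified.

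\emph{Step 1 (one-round escape to the target).} Fix any $x^\star\in\X$ with $\varphi(x^\star)=\phist$. Then $x^\star\in\Neps$ for every $\varepsilon\ge0$. By Assumption~\ref{ass:positivity}, $G_t(x^\star\mid P_t)\ge\alpha$ for every history, so on $\{P_t\notin\Oms\}$
\[
  \Prob\bigl(x^\star\in C_t\mid\calF_t\bigr)=1-\bigl(1-G_t(x^\star\mid P_t)\bigr)^M\ge1-(1-\alpha)^M=\beta .
\]
If $x^\star\in C_t$, it is a maximiser of $\varphi$ on $P_t\cup C_t$. It therefore survives the top-$N$ selection, provided ties are broken so as to keep some maximiser. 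In any case $f(P_{t+1})=\phist$, because the best fitness value in $P_t\cup C_t$ is always retained whatever the tie-breaking, since $N\ge1$. Hence $P_{t+1}\in\Oms$. This gives $\Prob(P_{t+1}\in\Oms\mid\calF_t)\ge\beta$ on $\{P_t\notin\Oms\}$. Note that $\beta>0$ because $\alpha>0$ and $M\ge1$; also $\alpha\le1/|\X|\le1$, so $\beta\le1$.

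\emph{Step 2 (geometric tail).} By Lemma~\ref{lem:monotone}, $\Oms$ is absorbing, so $\{\tau_\varepsilon>T\}=\{P_T\notin\Oms\}=\bigcap_{t\le T}\{P_t\notin\Oms\}$, and this event is $\calF_T$-measurable. Step~1 gives
\[
  \Prob(\tau_\varepsilon>T+1)=\E\bigl[\mathbf 1\{\tau_\varepsilon>T\}\,\Prob(P_{T+1}\notin\Oms\mid\calF_T)\bigr]\le(1-\beta)\Prob(\tau_\varepsilon>T).
\]
Induction from $\Prob(\tau_\varepsilon>0)\le1$ yields $\Prob(\tau_\varepsilon>T)\le(1-\beta)^T$.

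\emph{Step 3 (consequences).} Letting $T\to\infty$ gives $\Prob(\tau_\varepsilon=\infty)=0$. Summing the tail bound gives $\E[\tau_\varepsilon]=\sum_{T\ge0}\Prob(\tau_\varepsilon>T)\le\sum_{T\ge0}(1-\beta)^T=1/\beta$.

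The only delicate point is Step~1 under adaptivity. The conditional independence of the $M$ proposals must hold given the full past, and positivity must be used as a bound holding uniformly over histories, which Assumption~\ref{ass:positivity} explicitly provides. A second, minor care point is tie-breaking, which is handled by arguing through $f(P_{t+1})$ rather than through membership of $x^\star$ in $P_{t+1}$.
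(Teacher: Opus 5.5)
Your proof is correct and follows the paper's argument. Both obtain the one-round bound $\Prob(P_{t+1}\in\Oms\mid\calF_t)\ge\beta$ on $\{P_t\in\Omt\}$ by forcing a fixed target molecule into $C_t$ via uniform positivity, then chain rounds through absorption (Lemma~\ref{lem:monotone}) and sum the tail for the mean. You simply spell out the tower-property induction and the tie-breaking point that the paper compresses, and you use a global maximiser where the paper uses an arbitrary $x^*\in\Neps$, which is an inessential difference.
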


\begin{proof}
  Fix any $x^*\in\Neps$.  Conditional on $P_t\in\Omt$, each proposal
  equals $x^*$ with probability $G_t(x^*\mid P_t)\ge\alpha$
  (Assumption~\ref{ass:positivity}), so the probability that none of the
  $M$ i.i.d.\ proposals equals $x^*$ is at most $(1-\alpha)^M$.  If some
  proposal equals $x^*$ then $f(P_{t+1})\ge\varphi(x^*)\ge\phist-\varepsilon$,
  i.e.\ $P_{t+1}\in\Oms$.  Hence
  $\Prob(P_{t+1}\in\Oms\mid\calF_t)\ge\beta$ on $\{P_t\in\Omt\}$.  By
  Lemma~\ref{lem:monotone}, $\Oms$ is absorbing, so
  \[
    \Prob(\tau_\varepsilon>T)
    =\Prob(P_t\in\Omt,\ 0\le t\le T)
    \le(1-\beta)^T,
  \]
  which $\to0$.  Summing the tail gives $\E[\tau_\varepsilon]=
  \sum_{T\ge0}\Prob(\tau_\varepsilon>T)\le\sum_{T\ge0}(1-\beta)^T=1/\beta$.
\end{proof}

This result is reassuring but quantitatively weak: the bound $1/\beta$ can
be astronomically large when $\alpha$ is tiny (as it must be when $|\X|$
is huge).  To obtain sharper, landscape-aware bounds we need the
fitness-level method.

\subsection{The Layered Hitting-Time Bound}
\label{sec:layered}

Theorem~\ref{thm:base-conv} gives a crude bound $1/\beta$ driven by the
exploration floor $\alpha$.  The fitness-level method gives a much
sharper, landscape-aware bound.  The engine is the following domination
lemma, stated generally enough to serve all later sections.

\begin{lemma}[Sojourn domination]\label{lem:sojourn}
  Let $(\calF_t)$ be the natural filtration and suppose that for some
  $p_k\in(0,1]$,
  \begin{equation}
    \Prob\bigl(\ell(P_{t+1})\ge k+1\mid\calF_t\bigr)\ \ge\ p_k
    \qquad\text{a.s.\ on }\{\ell(P_t)=k\}.
    \label{eq:escape-cond}
  \end{equation}
  Define $\tau_k=\inf\{t\ge0:\ell(P_t)\ge k\}$.  Then
  $\E[\tau_{k+1}-\tau_k]\le 1/p_k$.
\end{lemma}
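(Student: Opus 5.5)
The plan is to prove a geometric tail bound for the sojourn $\tau_{k+1}-\tau_k$ and then sum the tail. Two preliminary observations come from Lemma~\ref{lem:monotone}. First, since $\ell(P_t)$ is non-decreasing, on $\{\tau_k\le t<\tau_{k+1}\}$ we have $\ell(P_t)=k$ exactly. So the whole sojourn takes place on the event where the hypothesis \eqref{eq:escape-cond} is in force. Second, if the process jumps over level $k$ then $\tau_{k+1}=\tau_k$, and the sojourn is zero. To make $\tau_{k+1}-\tau_k$ well defined I set it to $0$ on $\{\tau_k=\infty\}$, where both times are infinite. This convention is harmless, and in any case the bound proved below forces $\tau_{k+1}<\infty$ a.s.\ on $\{\tau_k<\infty\}$.

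The key step is the estimate
\[
  \Prob\bigl(\tau_k=s,\ \tau_{k+1}>s+n\bigr)\ \le\ (1-p_k)^n\,\Prob(\tau_k=s),
  \qquad s,n\ge0 .
\]
I would prove it by induction on $n$; the case $n=0$ is trivial. Let $E_{s,n}=\{\tau_k=s,\ \ell(P_{s+j})=k \text{ for } 0\le j\le n\}$, which equals $\{\tau_k=s,\ \tau_{k+1}>s+n\}$ by monotonicity. The event $E_{s,n-1}$ is $\calF_{s+n-1}$-measurable, since $\tau_k$ is a stopping time of the natural filtration. On $E_{s,n-1}$ we have $\ell(P_{s+n-1})=k$, so \eqref{eq:escape-cond} gives $\Prob(\ell(P_{s+n})=k\mid\calF_{s+n-1})\le 1-p_k$; here monotonicity is used again to rule out $\ell(P_{s+n})<k$. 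Applying the tower property,
\[
  \Prob(E_{s,n})=\E\bigl[\mathbf 1_{E_{s,n-1}}\Prob(\ell(P_{s+n})=k\mid\calF_{s+n-1})\bigr]\le(1-p_k)\Prob(E_{s,n-1}),
\]
and this closes the induction.

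Summing over $s$ yields $\Prob(\tau_{k+1}-\tau_k>n)\le(1-p_k)^n$ for all $n\ge0$. Then
\[
  \E[\tau_{k+1}-\tau_k]=\sum_{n\ge0}\Prob(\tau_{k+1}-\tau_k>n)\le\sum_{n\ge0}(1-p_k)^n=1/p_k .
\]

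I expect the main obstacle to be the measure-theoretic care around the random starting time $\tau_k$. The process is not assumed Markov, so no strong Markov property is available. Decomposing over the deterministic values $s$ of $\tau_k$ and conditioning on $\calF_{s+n-1}$ avoids this. This works because \eqref{eq:escape-cond} holds a.s.\ on $\{\ell(P_t)=k\}$ for every deterministic $t$ and with respect to the full filtration. That is exactly what makes the argument valid for the adaptive, non-Markov process.
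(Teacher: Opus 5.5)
Your proof is correct and follows the same route as the paper: you derive the geometric tail $\Prob(\tau_{k+1}-\tau_k>n)\le(1-p_k)^n$ by one-step conditioning using \eqref{eq:escape-cond} and monotonicity, then apply the tail-sum formula. The only difference is technical. You decompose over the deterministic events $\{\tau_k=s\}$ and condition on $\calF_{s+n-1}$, whereas the paper conditions directly on $\calF_{\tau_k+j-1}$ at a random time and handles $\{\tau_k=\infty\}$ by citing Theorem~\ref{thm:base-conv}; your version justifies both points rigorously without changing the argument.
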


\begin{proof}
  Write $N_k=\tau_{k+1}-\tau_k\ge0$.  On the event
  $\{\tau_k<\infty\}$, for $0\le j<N_k$ the time $\tau_k+j$ has
  $\ell(P_{\tau_k+j})=k$ (the level is $\ge k$ by definition of
  $\tau_k$ and $<k+1$ by definition of $\tau_{k+1}$).  The event
  $\{N_k\ge j\}=\{\tau_k+j\le\tau_{k+1}\}$ is
  $\calF_{\tau_k+j-1}$-measurable, and on it
  \eqref{eq:escape-cond} gives
  $\Prob(N_k\ge j+1\mid\calF_{\tau_k+j-1})\le1-p_k$.  Hence
  $\Prob(N_k\ge j+1)\le(1-p_k)\,\Prob(N_k\ge j)$, so
  $\Prob(N_k\ge j)\le(1-p_k)^{\,j-1}$ for $j\ge1$, and
  \[
    \E[N_k]=\sum_{j\ge1}\Prob(N_k\ge j)
    \le\sum_{j\ge1}(1-p_k)^{j-1}=\frac1{p_k}.
  \]
  (If $\tau_k=\infty$ on a null set only---guaranteed by
  Theorem~\ref{thm:base-conv}---the bound is unaffected.)
\end{proof}

The lemma is simple but powerful: it converts a per-round conditional
escape probability into a bound on the expected sojourn at a level.
Summing over all transient levels yields the main result of this section.

\begin{theorem}[Layered Hitting-Time Bound]\label{thm:base-htb}
  Under Assumptions~\ref{ass:elitism} and~\ref{ass:positivity}, for any
  initial pool $P_0$ with $\ell(P_0)=k_0$,
  \begin{equation}
    \E[\tau_\varepsilon\mid P_0]
    \ \le\ \sum_{k=k_0}^{K(\varepsilon)-1}\frac1{p_k}
    \ =\ \sum_{k=k_0}^{K(\varepsilon)-1}\frac1{1-(1-q_k)^M}.
    \label{eq:base-htb}
  \end{equation}
  In the regime $Mq_k\ll1$ this simplifies to
  $\E[\tau_\varepsilon\mid P_0]\lesssim
  \frac1M\sum_{k=k_0}^{K(\varepsilon)-1}\frac1{q_k}$.
\end{theorem}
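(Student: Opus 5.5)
The plan is to write $\tau_\varepsilon$ as a telescoping sum of sojourn times and bound each sojourn with Lemma~\ref{lem:sojourn}. With $\tau_k=\inf\{t\ge0:\ell(P_t)\ge k\}$ as in that lemma, the condition $\ell(P_0)=k_0$ gives $\tau_{k_0}=0$. By Definition~\ref{def:absorbing}, $\tau_{K}=\tau_\varepsilon$, where $K=K(\varepsilon)$. Hence
\[
  \tau_\varepsilon=\sum_{k=k_0}^{K-1}\bigl(\tau_{k+1}-\tau_k\bigr),
\]
with every summand nonnegative by Lemma~\ref{lem:monotone}, since the level is non-decreasing. Some levels may be skipped: if the process jumps over level $k$, then $\tau_{k+1}=\tau_k$. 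The identity still holds in that case, and the bound for that term is trivially satisfied. Taking expectations and using linearity for nonnegative terms reduces the theorem to showing $\E[\tau_{k+1}-\tau_k]\le 1/p_k$ for each $k$.

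The key step is to verify the escape condition \eqref{eq:escape-cond} with $p_k=1-(1-q_k)^M$. Fix $t$ and work on the event $\{\ell(P_t)=k\}$ with $k<K$. Conditional on $\calF_t$, the proposals $y_1,\dots,y_M$ are i.i.d.\ from $G_t(\cdot\mid P_t)$. By Definition~\ref{def:qk}, each proposal lies in $\Akp$ with probability at least $q_k$, because $q_k$ is an infimum over $t$ and over level-$k$ pools. Independence then gives that some proposal lies in $\Akp$ with probability at least $1-(1-q_k)^M=p_k$.

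It remains to check that such an event forces $\ell(P_{t+1})\ge k+1$. If $y\in C_t$ has $\varphi(y)>v_k$, elitism via $\topN$ keeps a molecule of fitness at least $\varphi(y)$, so $f(P_{t+1})\ge\varphi(y)\ge v_{k+1}$. Therefore the uncapped level is at least $k+1$, and since $k+1\le K$ the cap does not interfere. The remark after Definition~\ref{def:qk} together with Assumption~\ref{ass:positivity} gives $q_k\ge\alpha|\Akp|>0$, so $p_k\in(0,1]$ as the lemma requires. Lemma~\ref{lem:sojourn} now yields each sojourn bound, and summing them gives \eqref{eq:base-htb}.

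The main obstacle is the conditioning subtlety. In the adaptive setting, $G_t$ may depend on more than $P_t$, so the conditional law of $C_t$ given $\calF_t$ is $G_t(\cdot\mid P_t)^{\otimes M}$ with $G_t$ being $\calF_t$-measurable. I would handle this by relying on the fact that $q_k$ is defined as an infimum over all such $G_t$, which makes the bound uniform regardless of what $G_t$ depends on. The event $\tau_k=\infty$ is null by Theorem~\ref{thm:base-conv}. For the asymptotic statement, when $Mq_k\ll1$ we have $1-(1-q_k)^M=Mq_k(1+O(Mq_k))$, so $1/p_k\approx 1/(Mq_k)$. Separately, the inequality $(1-q)^M\le e^{-Mq}$ gives the rigorous bound $p_k\ge 1-e^{-Mq_k}$, which justifies the $\lesssim$.
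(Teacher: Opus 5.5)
Your proposal is correct and follows essentially the same route as the paper: verify the escape condition \eqref{eq:escape-cond} with $p_k=1-(1-q_k)^M$ from the $M$ i.i.d.\ proposals and elitism, apply Lemma~\ref{lem:sojourn} level by level, and telescope $\tau_\varepsilon=\sum_{k=k_0}^{K-1}(\tau_{k+1}-\tau_k)$ using monotonicity of the level. Your added care about skipped levels and the cap at $K$, and your use of $(1-q)^M\le e^{-Mq}$ in place of the paper's $1-(1-q_k)^M\ge Mq_k(1-q_k)^{M-1}$ for the small-$q$ form, are harmless refinements of the same argument.
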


\begin{proof}
  By Definition~\ref{def:qk}, on $\{\ell(P_t)=k\}$ each of the $M$
  i.i.d.\ proposals lies in $\Akp$ with probability
  $G_t(\Akp\mid P_t)\ge q_k$, so the probability that at least one does
  is $\ge1-(1-q_k)^M=p_k$; and one such proposal forces
  $\ell(P_{t+1})\ge k+1$ by elitism.  Thus \eqref{eq:escape-cond} holds
  and Lemma~\ref{lem:sojourn} gives
  $\E[\tau_{k+1}-\tau_k]\le1/p_k$.  By Lemma~\ref{lem:monotone} the
  levels are non-decreasing, so
  $\tau_\varepsilon=\tau_K=\sum_{k=k_0}^{K(\varepsilon)-1}(\tau_{k+1}-\tau_k)$
  ($P_0$-a.s.\ finite by Theorem~\ref{thm:base-conv}); summing the
  per-level bounds yields \eqref{eq:base-htb}.  The small-$q$ form uses
  $1-(1-q_k)^M\ge Mq_k(1-q_k)^{M-1}\approx Mq_k$.
\end{proof}

\begin{remark}[Where the accuracy $\varepsilon$ enters]\label{rem:eps-levels}
  The tolerance $\varepsilon$ does not appear explicitly on the right-hand
  side of \eqref{eq:base-htb}---nor of any layered bound in this paper
  (Theorems~\ref{thm:full-htb}, \ref{thm:m-htb}, and~\ref{thm:nm-exit}
  and the evaluation bounds of Section~\ref{sec:control})---because it is
  carried by the \emph{summation range}.  The sum runs over the transient
  levels $k=k_0,\dots,K-1$ with
  $K=K(\varepsilon)=\min\{j:v_j\ge\phist-\varepsilon\}$ the index of the
  first $\varepsilon$-optimal level (Definition~\ref{def:levels}).
  Decreasing $\varepsilon$ raises $K(\varepsilon)$, \emph{appending} the
  harder near-optimal levels (smaller $q_k$), so the bound is monotone in
  $\varepsilon$ and, as $\varepsilon\to0$, runs to the top of the landscape
  and grows---without bound when the top levels have $q_k\to0$.  Thus a
  tighter accuracy is paid for by more, and harder, summands rather than by
  an explicit $\varepsilon$-factor.  Under a noisy oracle $\varepsilon$
  enters a second time, through the certification margin; see
  Remark~\ref{rem:eps}.
\end{remark}

\begin{remark}[Relation to the fitness-level method]\label{rem:fitness-level}
  The bound \eqref{eq:base-htb} is the classical fitness-level (or
  level-based) upper bound~\cite{Wegener2001,Jansen2013} specialised to the
  generative-selection loop.  Partitioning the search space into the level
  sets of Definition~\ref{def:levels} and writing $s_k$ for a lower bound on
  the per-round probability of leaving level $k$ upwards, the fitness-level
  method gives $\E[\tau_\varepsilon]\le\sum_k 1/s_k$; here
  $s_k=p_k=1-(1-q_k)^M$ is supplied by the batch of $M$ i.i.d.\ proposals,
  and the exponent $M$ makes the offspring-population speed-up explicit, in
  the spirit of the parallel fitness-level results of
  Sudholt~\cite{Sudholt2013} and the drift refinements of Doerr, Johannsen
  and Winzen~\cite{Doerr2012}.  Two features distinguish the present setting.
  First, the escape probability $q_k$ is not tied to a fixed mutation
  operator but is a property of the \emph{adaptive} kernel $G_t$, and can
  therefore \emph{improve} as the loop learns---precisely what the memory
  analysis of Section~\ref{sec:finite} quantifies.  Second, in the
  memoryless single-proposal case $M=1$ with $G_t\equiv G$ fixed, one has
  $p_k=q_k$ and \eqref{eq:base-htb} reduces verbatim to the textbook bound
  $\E[\tau_\varepsilon\mid P_0]\le\sum_{k=k_0}^{K(\varepsilon)-1} 1/q_k$.
\end{remark}

\begin{definition}[Landscape difficulty]\label{def:difficulty}
  The quantity $D=\sum_{k=k_0}^{K(\varepsilon)-1}1/q_k$ is the \emph{landscape
  difficulty}; by Theorem~\ref{thm:base-htb},
  $\E[\tau_\varepsilon]\lesssim D/M$ in the small-$q$ regime.  A level
  $k^*$ with $q_{k^*}\ll q_k$ ($k\ne k^*$) is a \emph{bottleneck}
  (``activity cliff''): it dominates $D$ and hence the hitting time.
\end{definition}

\paragraph{Numerical illustration: convergence.}

Figure~\ref{fig:convergence} shows the median best-so-far fitness $f(P_t)$ over
$150$ independent runs at $n=40$, $N=20$, $M=40$.  The non-adaptive
uniform kernel stalls near $f\approx30$: for $k>n/2$ the escape set
$\Akp$ is an exponentially small fraction of $\X$, so $q_k^0$ is
minuscule and the layered bound $\sum_k 1/p_k^0$ is astronomically
large---the algorithm does not reach the optimum within the budget, with
empirical solve rate $0.00$.  Every \emph{learning} kernel, by contrast,
drives $f(P_t)$ to the optimum $\phist=40$ with solve rate $1.00$,
illustrating Theorem~\ref{thm:base-conv}: the adaptive proposal distribution
concentrates on the escape set and converges almost surely.

\begin{figure}[ht]
\centering
\begin{tikzpicture}
\begin{axis}[
    width=0.86\textwidth, height=0.46\textwidth,
    xlabel={round $t$}, ylabel={median best fitness $f(P_t)$},
    xmin=0, xmax=50, ymin=24, ymax=43,
    ytick={24,26,28,30,32,34,36,38,40},
    legend style={at={(0.97,0.55)},anchor=east}, legend cell align=left,
    grid=both, major grid style={black!12}, minor grid style={black!6},
    tick align=outside, thick,
]
\addplot[black, dashed, thin, forget plot] coordinates {(0,40) (50,40)};
\node[anchor=south, font=\footnotesize] at (axis cs:25,40.3) {$\phist=40$ (optimum)};
\addplot[blue, mark=*, mark size=1pt, mark repeat=4] coordinates {
(0,26.0) (1,27.0) (2,29.0) (3,32.0) (4,34.0) (5,35.0) (6,37.0) (7,38.0) (8,39.0) (9,40.0) (10,40.0) (12,40.0) (14,40.0) (16,40.0) (18,40.0) (20,40.0) (24,40.0) (28,40.0) (32,40.0) (36,40.0) (40,40.0) (44,40.0) (48,40.0) (50,40.0)};
\addlegendentry{learning, $m=1$ (single-step)}
\addplot[teal, mark=square*, mark size=1pt, mark repeat=4] coordinates {
(0,26.0) (1,27.0) (2,28.0) (3,29.0) (4,31.0) (5,31.0) (6,32.0) (7,33.0) (8,34.0) (9,35.0) (10,35.0) (11,36.0) (12,37.0) (13,37.0) (14,38.0) (15,39.0) (16,39.0) (17,39.0) (18,40.0) (20,40.0) (24,40.0) (28,40.0) (32,40.0) (36,40.0) (40,40.0) (44,40.0) (48,40.0) (50,40.0)};
\addlegendentry{learning, $m=5$}
\addplot[orange, mark=triangle*, mark size=1.2pt, mark repeat=4] coordinates {
(0,26.0) (2,28.0) (4,31.0) (6,32.0) (8,33.0) (10,34.0) (12,34.0) (14,35.0) (16,35.0) (18,36.0) (20,36.0) (22,36.0) (24,37.0) (26,37.0) (28,37.0) (30,37.0) (32,38.0) (34,38.0) (36,38.0) (38,38.0) (40,38.0) (42,38.0) (44,38.0) (46,39.0) (48,39.0) (50,39.0)};
\addlegendentry{learning, full memory}
\addplot[red, mark=o, mark size=1.2pt, mark repeat=5, densely dashed] coordinates {
(0,26.0) (2,28.0) (4,28.0) (6,29.0) (8,29.0) (10,29.0) (12,29.0) (14,29.0) (16,30.0) (20,30.0) (24,30.0) (28,30.0) (32,30.0) (36,30.0) (40,30.0) (44,30.0) (48,30.0) (50,30.0)};
\addlegendentry{uniform $G_t\equiv U$ (no learning)}
\end{axis}
\end{tikzpicture}
\caption{Overall convergence on \textsc{OneMax} ($n=40$, $N=20$, $M=40$;
  median of $150$ runs).  Learning kernels reach the optimum
  $\phist=40$ (solve rate $1.00$), confirming the almost-sure convergence
  of Theorem~\ref{thm:base-conv}; the non-adaptive uniform kernel stalls
  (solve rate $0.00$) because its high-level escape probabilities
  $q_k^0$ are exponentially small.  Mean hitting times were
  $9.3$ ($m{=}1$), $18.1$ ($m{=}5$), and $69.2$ (full memory) rounds.}
\label{fig:convergence}
\end{figure}

\subsection{Multi-objective fitness via the hypervolume indicator}
\label{sec:multiobjective}

Drug design is intrinsically multi-objective: potency, selectivity,
solubility, synthesizability, and pharmacokinetic properties all matter
simultaneously.  In practice one rarely optimises a single scalar
function; instead, a vector of objectives $\Phi=(\Phi_1,\dots,\Phi_d)$ is
evaluated, and the design seeks molecules that are not dominated on any
component.  We lift the entire theory to this setting via the
\emph{hypervolume indicator}, a standard scalarisation that measures the
volume of objective-space dominated by a population.

\begin{definition}[Hypervolume indicator]\label{def:hv}
  Fix a reference point $r\in\R^d$ with $r_i<\Phi_i(x)$ for all $x\in\X$
  and all $i$.  For $P\in\Omn$,
  \[
    \mathcal{H}(P)=\lambda_d\!\Bigl(\bigcup_{x\in P}[r,\Phi(x)]\Bigr),
    \qquad
    [r,\Phi(x)]=\{y\in\R^d:r_i\le y_i\le\Phi_i(x)\ \forall i\},
  \]
  where $\lambda_d$ is $d$-dimensional Lebesgue measure and $[r,\Phi(x)]$
  is the hyperrectangle with lower corner $r$ and upper corner
  $\Phi(x)$.
\end{definition}

The corner ordering is essential: $[r,\Phi(x)]$ requires $r\preceq
\Phi(x)$, which the reference-point condition guarantees, so each box is
non-degenerate.

\begin{lemma}[Monotonicity of $\mathcal H$]\label{lem:hv-mono}
  If $P\subseteq P'$ then $\mathcal H(P)\le\mathcal H(P')$.  More
  generally, adding a molecule $y$ with $\Phi(y)\not\preceq\Phi(x)$ for
  all $x\in P$ strictly increases $\mathcal H$.
\end{lemma}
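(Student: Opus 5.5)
The plan is to treat $\mathcal H$ as a set function on arbitrary finite subsets of $\X$. The defining formula in Definition~\ref{def:hv} makes sense for any finite $P$, not only for $P\in\Omn$, and this matters because $P\cup\{y\}$ has $N+1$ elements. Write $D(P)=\bigcup_{x\in P}[r,\Phi(x)]$ for the dominated region. Both claims then reduce to elementary properties of Lebesgue measure applied to $D(P)$.

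\emph{First claim.} If $P\subseteq P'$, the union defining $D(P)$ is over a subset of the boxes defining $D(P')$, so $D(P)\subseteq D(P')$. Monotonicity of $\lambda_d$ then gives $\mathcal H(P)\le\mathcal H(P')$.

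\emph{Second claim.} The approach is to exhibit a small cube near the corner $\Phi(y)$ that lies inside $[r,\Phi(y)]$ but is disjoint from $D(P)$. Since $\Phi(y)\not\preceq\Phi(x)$ for each $x\in P$, there is, for each $x$, an index $i(x)$ with $\Phi_{i(x)}(y)>\Phi_{i(x)}(x)$. Set
\[
  \delta=\tfrac12\min\Bigl\{\min_{x\in P}\bigl(\Phi_{i(x)}(y)-\Phi_{i(x)}(x)\bigr),\ \min_{1\le i\le d}\bigl(\Phi_i(y)-r_i\bigr)\Bigr\}.
\]
This is a minimum of finitely many strictly positive numbers: $P$ is finite, the first group is positive by the choice of $i(x)$, and the second group is positive by the strict reference-point condition $r_i<\Phi_i(y)$. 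Hence $\delta>0$.

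Now let $B=\{z:\Phi_i(y)-\delta\le z_i\le\Phi_i(y)\ \forall i\}$. The second term in the minimum gives $B\subseteq[r,\Phi(y)]\subseteq D(P\cup\{y\})$. For any $z\in B$ and any $x\in P$,
\[
  z_{i(x)}\ge\Phi_{i(x)}(y)-\delta>\Phi_{i(x)}(x),
\]
so $z\notin[r,\Phi(x)]$. Therefore $B\cap D(P)=\emptyset$. Additivity of $\lambda_d$ on the disjoint sets $D(P)$ and $B$ then yields
\[
  \mathcal H(P\cup\{y\})\ \ge\ \lambda_d\bigl(D(P)\bigr)+\lambda_d(B)\ =\ \mathcal H(P)+\delta^d\ >\ \mathcal H(P).
\]

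The only real obstacle is the second claim, where one needs a \emph{uniform} margin $\delta$ that works for every $x\in P$ simultaneously. Finiteness of $P$ supplies this, and strictness of the reference point keeps the cube $B$ non-degenerate. Without either, $\delta$ could be $0$ and the increase could vanish.
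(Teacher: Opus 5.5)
Your proof is correct and follows the same route as the paper: set inclusion plus monotonicity of $\lambda_d$ for the first claim, and for the second a positive-measure region near the corner $\Phi(y)$ that lies inside $[r,\Phi(y)]$ but outside every box $[r,\Phi(x)]$, $x\in P$. Your explicit cube of side $\delta>0$ makes rigorous the step the paper only sketches; the uniform margin comes from finiteness of $P$ and the strict reference-point condition.
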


\begin{proof}
  $\bigcup_{x\in P}[r,\Phi(x)]\subseteq\bigcup_{x\in P'}[r,\Phi(x)]$ gives
  monotonicity by measure monotonicity.  If $\Phi(y)$ is non-dominated
  by $P$, the box $[r,\Phi(y)]$ contains a point (near corner $\Phi(y)$)
  outside every $[r,\Phi(x)]$, $x\in P$, a set of positive measure.
\end{proof}

\paragraph{Hypervolume fitness levels and convergence.}

Set $f(P)=\mathcal H(P)$ and let $\mathcal H^*=\mathcal
H(\X)=\lambda_d(\bigcup_{x\in\X}[r,\Phi(x)])$ be the hypervolume of the
true Pareto front.  Elitism in Step~3 now retains the $N$ molecules
maximising the hypervolume contribution, so $\mathcal H(P_{t+1})\ge
\mathcal H(P_t)$.  Fix a target tolerance $\varepsilon>0$ and let
\[
  \mathcal N_\varepsilon^{\mathrm{HV}}
  =\{P\in\Omn:\mathcal H(P)\ge\mathcal H^*-\varepsilon\}.
\]
Partition the attainable hypervolume gap into levels
$0=\delta_0<\delta_1<\dots<\delta_K=\mathcal H^*-(\mathcal
H^*-\varepsilon)$ and define the level $\ell^{\mathrm{HV}}(P)$ by which
band $\mathcal H^*-\mathcal H(P)$ falls into (smaller gap $=$ higher
level).  The analogue of the escape set is
\[
  \Akp(\mathrm{HV})
  =\bigl\{y\in\X : \mathcal H(P\cup\{y\})\ge\mathcal H(P)+\Delta f_k\bigr\},
\]
the molecules whose addition raises the hypervolume by at least the
band width $\Delta f_k=\delta_{k+1}-\delta_k$.

\begin{assumption}[Hypervolume uniform positivity]\label{ass:hv-pos}
  There is $\alpha>0$ with $G_t(y\mid P)\ge\alpha$ for every $y\in\X$,
  uniformly over the conditioning information; and for each transient
  level $k$ there is at least one $y$ with $\mathcal
  H(P\cup\{y\})\ge\mathcal H(P)+\Delta f_k$ (attainability of each band).
\end{assumption}

\begin{definition}[Hypervolume escape probabilities]\label{def:hv-qk}
  $q_k^{\mathrm{HV}}=\inf G_t(\Akp(\mathrm{HV})\mid P)$, the infimum over
  the relevant conditioning information among pools with
  $\ell^{\mathrm{HV}}(P)=k$; and $p_k^{\mathrm{HV}}=1-(1-q_k^{\mathrm{HV}})^M$.
\end{definition}

\begin{theorem}[Multi-objective convergence and hitting time]
\label{thm:hv-htb}
  Under Assumption~\ref{ass:elitism} (hypervolume elitism) and
  Assumption~\ref{ass:hv-pos}, the hypervolume process $\mathcal H(P_t)$
  is non-decreasing, $\Prob(\tau_\varepsilon^{\mathrm{HV}}<\infty)=1$
  with geometric tail, and
  \[
    \E[\tau_\varepsilon^{\mathrm{HV}}\mid P_0]
    \ \le\ \sum_{k=k_0}^{K(\varepsilon)-1}\frac1{p_k^{\mathrm{HV}}}
    \ =\ \sum_{k=k_0}^{K(\varepsilon)-1}\frac1{1-(1-q_k^{\mathrm{HV}})^M}.
  \]
\end{theorem}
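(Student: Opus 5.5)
The plan is to reduce the statement to the scalar machinery already established, with $f(P)=\mathcal H(P)$ and the hypervolume levels $\ell^{\mathrm{HV}}$ in place of $\ell$. The proof has three parts: monotonicity and absorption, almost-sure convergence with a geometric tail, and the layered bound via Lemma~\ref{lem:sojourn}.

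\emph{Monotonicity and absorption.} Hypervolume elitism keeps the $N$-subset of $P_t\cup C_t$ that maximises $\mathcal H$. Since $P_t$ is itself a candidate $N$-subset, this gives $\mathcal H(P_{t+1})\ge\mathcal H(P_t)$ almost surely. The level $\ell^{\mathrm{HV}}$ is a non-decreasing function of $\mathcal H$ (capped at $K$), so the argument of Lemma~\ref{lem:monotone} goes through verbatim. Hence $\ell^{\mathrm{HV}}(P_t)$ is non-decreasing and $\mathcal N_\varepsilon^{\mathrm{HV}}$ is absorbing.

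\emph{Convergence and geometric tail.} I would repeat the proof of Theorem~\ref{thm:base-conv}, but with a per-level escape instead of a single target molecule $x^*$. A single pool $x^*$ need not reach the target, since a target pool requires $N$ molecules jointly, so the single-molecule argument does not transfer directly. Instead I would use Assumption~\ref{ass:hv-pos}: on $\{\ell^{\mathrm{HV}}(P_t)=k\}$ there is at least one $y$ in $\Akp(\mathrm{HV})$, which has mass at least $\alpha$. This gives $q_k^{\mathrm{HV}}\ge\alpha>0$, and hence $p_k^{\mathrm{HV}}\ge\beta=1-(1-\alpha)^M$ for every transient $k$. Chaining at most $K-1$ such escapes then yields a geometric tail, for instance
\[
\Prob(\tau^{\mathrm{HV}}_\varepsilon>T)\le\Prob\bigl(\mathrm{Bin}(T,\beta)<K-1\bigr),
\]
which decays geometrically in $T$ and gives $\Prob(\tau^{\mathrm{HV}}_\varepsilon<\infty)=1$.

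\emph{Layered bound.} I will verify the escape condition \eqref{eq:escape-cond} with $p_k=p_k^{\mathrm{HV}}$. On $\{\ell^{\mathrm{HV}}(P_t)=k\}$, each of the $M$ i.i.d.\ proposals lies in $\Akp(\mathrm{HV})$ with conditional probability at least $q_k^{\mathrm{HV}}$, by the infimum in Definition~\ref{def:hv-qk}. So with probability at least $p_k^{\mathrm{HV}}$ some proposal $y$ satisfies $\mathcal H(P_t\cup\{y\})\ge\mathcal H(P_t)+\Delta f_k$. Lemma~\ref{lem:sojourn} then gives $\E[\tau_{k+1}-\tau_k]\le1/p_k^{\mathrm{HV}}$. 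Telescoping over $k=k_0,\dots,K-1$, exactly as in Theorem~\ref{thm:base-htb}, gives the bound.

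\emph{Main obstacle.} The delicate step is showing that such a $y$ actually forces $\ell^{\mathrm{HV}}(P_{t+1})\ge k+1$, for two reasons.

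First, $P_t\cup\{y\}$ has $N+1$ elements, whereas the selected pool has only $N$. I would handle this by arguing that the best $N$-subset of $P_t\cup C_t$ has hypervolume at least that of $P_t\cup\{y\}$ minus the smallest exclusive contribution, and that this loss must be controlled. The cleanest route is to interpret $\Akp(\mathrm{HV})$ (or the elitism rule) so that the gain $\Delta f_k$ is attained by an $N$-subset $(P_t\setminus\{x\})\cup\{y\}$. I will state this reading explicitly, as it is implicit in the definition of the escape set.

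Second, a gain of $\Delta f_k$ must move the gap $\mathcal H^*-\mathcal H$ across the next band boundary. If $P_t$ sits strictly inside its band, a gain equal to one band width suffices to leave the band only if the bands are measured from the current value. So I would either define the levels via the band containing the current gap and use $\Delta f_k$ as the distance to the next boundary, or note that the gain of at least $\Delta f_k$ crosses at least one boundary because the remaining distance to that boundary is at most $\Delta f_k$.

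With these two points settled, everything else is a direct transcription of Section~\ref{sec:layered}.
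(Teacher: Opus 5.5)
Your skeleton matches the paper's proof: monotonicity and absorption via Lemma~\ref{lem:monotone}, a band-raising proposal drawn with probability at least $\alpha$ from Assumption~\ref{ass:hv-pos}, then Lemma~\ref{lem:sojourn} summed over levels. The paper, however, simply asserts that ``a band-raising proposal forces $\ell^{\mathrm{HV}}(P_{t+1})\ge k+1$ by elitism'' and takes the tail ``as in Theorem~\ref{thm:base-conv}''. Both points you single out are exactly where that argument is incomplete, and your treatment is the right one.

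\textbf{Truncation.} The swap reading is not optional. With the literal escape set $\{y:\mathcal H(P\cup\{y\})\ge\mathcal H(P)+\Delta f_k\}$, the statement can fail. Take $d=2$, $r=(0,0)$, $\X=\{a,b\}$ with $\Phi(a)=(2,1)$, $\Phi(b)=(1,2)$, $N=1$, and uniform $G_t$ (so $\alpha=\tfrac12$).
\begin{itemize}
\item Then $\mathcal H(\{a\})=\mathcal H(\{b\})=2$ and $\mathcal H^*=3$.
\item For any band width $\Delta f_k\le1$, $b$ lies in the literal escape set of $\{a\}$. So the attainability clause holds and $q_k^{\mathrm{HV}}\ge\tfrac12$.
\item Yet every pool has hypervolume $2$. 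For $\varepsilon<1$ the target set is empty and $\tau_\varepsilon^{\mathrm{HV}}=\infty$.
\end{itemize}

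With your reading ($y$ escapes if $\mathcal H((P\setminus\{x\})\cup\{y\})\ge\mathcal H(P)+\Delta f_k$ for some $x\in P$) the step goes through. Such a $y$ cannot lie in $P_t$, so $(P_t\setminus\{x\})\cup\{y\}$ is an admissible $N$-subset of $P_t\cup C_t$. Exact maximisation of $\mathcal H$ over $N$-subsets then keeps a pool at least that good. This is also the reading your monotonicity argument needs, since Lemma~\ref{lem:hv-mono} alone does not give $\mathcal H(P_{t+1})\ge\mathcal H(P_t)$ when $P_{t+1}\not\supseteq P_t$.

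One addition is needed. State explicitly that the attainability clause of Assumption~\ref{ass:hv-pos} must be read in the same swap sense. Your bound $q_k^{\mathrm{HV}}\ge\alpha$ requires the reinterpreted escape set to be nonempty, and in the example above it is empty.

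\textbf{Band crossing.} Your argument is correct provided $\Delta f_k$ denotes the width of the band the current gap occupies. A gap $g\in[\delta_j,\delta_{j+1})$ reduced by at least $\delta_{j+1}-\delta_j$ falls strictly below $\delta_j$.

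\textbf{Tail.} The binomial chaining $\Prob(\tau_\varepsilon^{\mathrm{HV}}>T)\le\Prob(\mathrm{Bin}(T,\beta)<K-1)$ correctly replaces the single-target-molecule argument of Theorem~\ref{thm:base-conv}. That argument does not transfer, because one escaping proposal guarantees only one band, not the target. The standard coupling suffices: on each transient round the level rises with conditional probability at least $\beta$ given $\calF_t$. The resulting tail is of order $T^{K-2}(1-\beta)^{T}$ (constants depending on $K,\beta$) rather than $(1-\beta)^T$, which is still geometric.
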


\begin{proof}
  Monotonicity of $\mathcal H(P_t)$ is Lemma~\ref{lem:hv-mono} under
  hypervolume elitism, so $\ell^{\mathrm{HV}}(P_t)$ is non-decreasing and
  $\mathcal N_\varepsilon^{\mathrm{HV}}$ is absorbing
  (Lemma~\ref{lem:monotone} applies verbatim with $f=\mathcal H$).
  Convergence and the tail follow as in Theorem~\ref{thm:base-conv}:
  some band-raising $y$ exists (Assumption~\ref{ass:hv-pos}) and is
  proposed with probability $\ge\alpha$ each draw.  A band-raising
  proposal forces $\ell^{\mathrm{HV}}(P_{t+1})\ge k+1$ by elitism, so the
  per-round escape probability at level $k$ is $\ge p_k^{\mathrm{HV}}$;
  Lemma~\ref{lem:sojourn} and summation give the bound.
\end{proof}

\paragraph{Memory-dependence of the hypervolume escape probabilities.}

All of Sections~\ref{sec:full}--\ref{sec:comparison} transfer with
$q_k\rightsquigarrow q_k^{\mathrm{HV}}$:

\begin{itemize}[nosep]
\item \emph{Full memory:} define $q_k^{\infty,\mathrm{HV}}$ as in
  Definition~\ref{def:full-qk} with escape set $\Akp(\mathrm{HV})$;
  Theorem~\ref{thm:full-htb} holds verbatim.
\item \emph{Finite memory:} the window process $S_t^m$ is Markov
  (Theorem~\ref{thm:m-markov} does not use the form of $\varphi$), and
  Theorem~\ref{thm:m-htb} holds with $q_k^{m,\mathrm{HV}}$.
\item \emph{Monotonicity:} under Assumption~\ref{ass:mono} stated for
  $\Akp(\mathrm{HV})$, Theorem~\ref{thm:mono} gives
  $q_k^{m,\mathrm{HV}}\ge q_k^{m-1,\mathrm{HV}}$, hence the full
  hypervolume memory hierarchy.
\end{itemize}

\begin{remark}[Pareto-rank variant]
  An alternative level structure uses non-domination rank: define
  $\ell(P)$ via the best Pareto rank present in $P$ and $\Akp$ as the
  molecules of strictly better rank.  Because non-domination rank is
  also monotone under elitist selection, Lemmas~\ref{lem:monotone}
  and~\ref{lem:sojourn} apply unchanged and all hitting-time and
  memory-hierarchy results follow identically.
\end{remark}

\section{Learning Kernels with Memory}
\label{sec:finite}

This section studies kernels that \emph{learn} from the history of elite
pools: first the idealised \emph{full-memory} case, in which $G_t$ depends
on the entire trajectory, and then the practically important \emph{finite
$m$-step memory}, in which $G_t$ uses only the last $m$ pools.

The central insight is that memory---how much of the past the generative
model is trained on---creates a Markov chain on an appropriately enlarged
state space.  For full memory, the history process itself is Markov; for
finite memory, the sliding window of the last $m$ pools is a
time-homogeneous Markov chain.  This structural observation lets us apply
the same sojourn-domination technique developed in Section~\ref{sec:layered},
but now with escape probabilities that depend on the memory depth.

\subsection{Full-memory learning kernels}
\label{sec:full}

We now let the generative model depend on the \emph{entire} history of
elite pools---the idealised limit of retraining on all data discovered
so far.  While impractical when the number of rounds grows large, this
setting serves as a theoretical benchmark against which finite-memory
algorithms can be measured.

\paragraph{The history process is Markov.}

\begin{definition}[Full-memory kernel]\label{def:full}
  Let $F\colon\bigcup_{t\ge0}\Omn^{t+1}\to\G$ be measurable.  The
  \emph{full-memory} algorithm runs \Cref{alg:gen-sel} with
  $G_t=F(P_0,\dots,P_t)$.  The \emph{history process} is
  $\mathbf{H}_t=(P_0,\dots,P_t)\in S_\infty:=\bigcup_{n\ge1}\Omn^n$.
\end{definition}

\begin{proposition}[Markov property of the history process]
\label{prop:history-markov}
  Under the full-memory algorithm, $(\mathbf{H}_t)_{t\ge0}$ is a Markov
  chain on $S_\infty$.
\end{proposition}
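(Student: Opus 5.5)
The plan mirrors the proof of Proposition~\ref{prop:markov}: exhibit $\mathbf{H}_{t+1}$ as a measurable function of $\mathbf{H}_t$ and of fresh randomness that is conditionally independent of the past given $\mathbf{H}_t$. The observation that drives everything is that the history process carries its own filtration, $\sigma(\mathbf{H}_0,\dots,\mathbf{H}_t)=\sigma(P_0,\dots,P_t)=\sigma(\mathbf{H}_t)$, since each $\mathbf{H}_s$ with $s\le t$ is a prefix of $\mathbf{H}_t$. The Markov property is therefore almost tautological; the work is in writing down the one-step kernel explicitly and checking that it is a genuine (measurable) transition kernel on $S_\infty$.

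\emph{Step 1: the kernel.} For $h=(P_0,\dots,P_t)\in\Omn^{t+1}\subseteq S_\infty$ set $G_h:=F(h)\in\G$. Then define
\[
  Q(h,\cdot)=\text{law of }\bigl(P_0,\dots,P_t,\topN(P_t\cup C)\bigr),\qquad C\sim G_h(\cdot\mid P_t)^{\otimes M}.
\]
Here $Q(h,\cdot)$ is supported on $\Omn^{t+2}$. Because $\Omn$ is finite, $S_\infty$ is countable; with the discrete $\sigma$-algebra every map is measurable and $Q$ is a stochastic matrix. So measurability of $F$ and of $\topN$ is automatic. The tie-breaking in $\topN$ must be a fixed deterministic rule, or else independent auxiliary randomness absorbed into the same construction.

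\emph{Step 2: the Markov identity.} Conditional on $\calF_t=\sigma(P_0,\dots,P_t)$, Step~1 of Algorithm~\ref{alg:gen-sel} draws $C_t$ i.i.d.\ from $G_t(\cdot\mid P_t)$ with $G_t=F(\mathbf{H}_t)$. This encodes the standing modelling assumption that the proposal randomness in round $t$ is fresh given the past pools. Hence
\[
  \Prob(\mathbf{H}_{t+1}=h'\mid\calF_t)=Q(\mathbf{H}_t,h')\qquad\text{a.s.}
\]
Since $\calF_t=\sigma(\mathbf{H}_0,\dots,\mathbf{H}_t)=\sigma(\mathbf{H}_t)$, it follows that $\Prob(\mathbf{H}_{t+1}\in\cdot\mid\mathbf{H}_0,\dots,\mathbf{H}_t)=Q(\mathbf{H}_t,\cdot)=\Prob(\mathbf{H}_{t+1}\in\cdot\mid\mathbf{H}_t)$. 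I will also remark that $Q$ does not depend on $t$, because the length of $h$ already encodes $t$. This makes the chain time-homogeneous on $S_\infty$, though it is transient: the length increases by one at each step.

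\emph{Main obstacle.} There is no real analytic difficulty, only bookkeeping. The points to state carefully are the conditional-independence assumption on the proposal randomness (without it nothing is Markov) and the handling of ties in $\topN$. If $\X$ were not finite, one would need $F$ measurable and a standard Borel structure on the disjoint union $S_\infty$; here finiteness lets us bypass this. Finally, I would note the consequence used later: the escape infima over histories make Lemma~\ref{lem:sojourn} applicable with $\calF_t=\sigma(\mathbf{H}_t)$.
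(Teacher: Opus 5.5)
Your proposal is correct and takes essentially the same route as the paper, which notes that $\mathbf{H}_{t+1}=(\mathbf{H}_t,P_{t+1})$ and that, given $\mathbf{H}_t=h$, the law of $P_{t+1}$ is determined by $\mathrm{last}(h)$ and $F(h)$. Your explicit kernel $Q$, the identity $\sigma(\mathbf{H}_0,\dots,\mathbf{H}_t)=\sigma(\mathbf{H}_t)$, and your remarks on tie-breaking, countability of $S_\infty$ and time-homogeneity are careful elaborations of that one-line argument, not a different approach.
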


\begin{proof}
  $\mathbf{H}_{t+1}=(\mathbf{H}_t,P_{t+1})$.  Given
  $\mathbf{H}_t=h=(P_0,\dots,P_t)$, the law of $P_{t+1}$ is determined by
  $P_t=\mathrm{last}(h)$ and $G_t=F(h)$, both functions of $h$.  Hence
  the conditional law of $\mathbf{H}_{t+1}$ depends on the past only
  through $\mathbf{H}_t$.
\end{proof}

\paragraph{Convergence and the layered bound.}

\begin{assumption}[Full-memory uniform positivity]\label{ass:full-pos}
  There exists $\alpha>0$ such that for all $t$, all $h\in\Omn^{t+1}$,
  and all $x\in\X$: $F(h)(x\mid\mathrm{last}(h))\ge\alpha$.
\end{assumption}

\begin{theorem}[Convergence and geometric tail, full memory]
\label{thm:full-conv}
  Under Assumptions~\ref{ass:elitism} and~\ref{ass:full-pos},
  $\Prob(\tau_\varepsilon>T)\le(1-\beta)^T$ with
  $\beta=1-(1-\alpha)^M$; in particular $\Prob(\tau_\varepsilon<\infty)=1$.
\end{theorem}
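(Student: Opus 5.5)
The plan is to repeat the argument of Theorem~\ref{thm:base-conv} verbatim, replacing the single-step kernel by the full-memory kernel $G_t=F(\mathbf{H}_t)$ and the natural filtration by the one generated by the history process, $\calF_t=\sigma(\mathbf{H}_t)=\sigma(P_0,\dots,P_t)$. Proposition~\ref{prop:history-markov} tells us that, conditional on $\calF_t$, the proposals $y_1,\dots,y_M$ of round $t$ are i.i.d.\ with law $F(\mathbf{H}_t)(\cdot\mid P_t)$, where $P_t=\mathrm{last}(\mathbf{H}_t)$. This conditional description is the only structural input needed.

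First, I will fix some $x^*\in\Neps$; this set is nonempty because $\X$ is finite and attains $\phist$. On the event $\{P_t\in\Omt\}$, Assumption~\ref{ass:full-pos} applied with $h=\mathbf{H}_t$ gives $\Prob(y_i=x^*\mid\calF_t)\ge\alpha$ for each $i$. Conditional independence then bounds the probability that no proposal equals $x^*$ by $(1-\alpha)^M$. If some $y_i=x^*$, elitism (Assumption~\ref{ass:elitism}) forces $f(P_{t+1})\ge\varphi(x^*)\ge\phist-\varepsilon$, so $P_{t+1}\in\Oms$. This yields the one-step estimate $\Prob(P_{t+1}\in\Oms\mid\calF_t)\ge\beta$ a.s.\ on $\{P_t\in\Omt\}$.

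Second, I will iterate this estimate. Lemma~\ref{lem:monotone} uses only elitism, so it holds unchanged and shows that $\Oms$ is absorbing. Hence $\{\tau_\varepsilon>T\}=\bigcap_{t\le T}\{P_t\in\Omt\}$. The event $\{\tau_\varepsilon>T-1\}$ is $\calF_{T-1}$-measurable, so conditioning on $\calF_{T-1}$ and applying the one-step bound gives
\[
\Prob(\tau_\varepsilon>T)\le(1-\beta)\,\Prob(\tau_\varepsilon>T-1).
\]
Induction from $\Prob(\tau_\varepsilon>0)\le1$ then gives $(1-\beta)^T$. Finally, $\beta>0$ because $\alpha>0$ and $M\ge1$. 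Letting $T\to\infty$ gives $\Prob(\tau_\varepsilon<\infty)=1$, and summing the tail gives $\E[\tau_\varepsilon]\le1/\beta$ as a by-product.

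The only step requiring care is the conditional i.i.d.\ claim in the first step, namely that the positivity floor holds uniformly over all histories and not just in expectation. This is exactly what Assumption~\ref{ass:full-pos} provides, since it quantifies over all $h\in\Omn^{t+1}$. The Markov property of $(\mathbf{H}_t)$ ensures the conditioning is legitimate even though $(P_t)$ alone is not Markov. I expect no further obstacle: the argument never needs time-homogeneity of the kernel, only the uniform lower bound.
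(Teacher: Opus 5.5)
Your proposal is correct and matches the paper's proof. The paper simply states that the argument of Theorem~\ref{thm:base-conv} goes through verbatim once you condition on the history $\mathbf{H}_t=h$ and use the floor $F(h)(x^*\mid\mathrm{last}(h))\ge\alpha$, uniformly over histories with $\mathrm{last}(h)\in\Omt$; this is exactly your one-step bound followed by the induction. (Minor remark: you do not need the Markov property of $(\mathbf{H}_t)$, only that the round-$t$ proposals are conditionally i.i.d.\ given the past with a law obeying the floor.)
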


\begin{proof}
  Identical to Theorem~\ref{thm:base-conv}, conditioning on
  $\mathbf{H}_t=h$ in place of $P_t$ and using
  $F(h)(x^*\mid\mathrm{last}(h))\ge\alpha$ from
  Assumption~\ref{ass:full-pos}; the bound is uniform over histories with
  $\mathrm{last}(h)\in\Omt$.
\end{proof}

\begin{definition}[Full-memory escape probabilities]\label{def:full-qk}
  For $k=1,\dots,K-1$,
  \[
    q_k^\infty=\inf_{t\ge0}\
      \inf_{\substack{h\in\Omn^{t+1}\\\ell(\mathrm{last}(h))=k}}
      F(h)\!\bigl(\Akp\mid\mathrm{last}(h)\bigr),
    \qquad p_k^\infty=1-(1-q_k^\infty)^M.
  \]
\end{definition}

Under Assumption~\ref{ass:full-pos}, $q_k^\infty\ge\alpha|\Akp|>0$.

\begin{theorem}[Layered Hitting-Time Bound, full memory]
\label{thm:full-htb}
  Under Assumptions~\ref{ass:elitism} and~\ref{ass:full-pos}, for any
  $P_0$ with $\ell(P_0)=k_0$,
  \[
    \E[\tau_\varepsilon\mid P_0]
    \ \le\ \sum_{k=k_0}^{K(\varepsilon)-1}\frac1{p_k^\infty}
    \ =\ \sum_{k=k_0}^{K(\varepsilon)-1}\frac1{1-(1-q_k^\infty)^M}.
  \]
\end{theorem}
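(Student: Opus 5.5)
The plan is to reduce the statement to the Sojourn-Domination Lemma~\ref{lem:sojourn}. That lemma is formulated for an arbitrary filtration and needs only the conditional escape condition \eqref{eq:escape-cond}, so the Markov structure matters only for identifying the conditional law of the proposals. Let $\calF_t=\sigma(\mathbf{H}_t)=\sigma(P_0,\dots,P_t)$ be the natural filtration of the history process. By Proposition~\ref{prop:history-markov} and Definition~\ref{def:full}, conditional on $\calF_t$ the proposals $C_t=\{y_1,\dots,y_M\}$ are i.i.d.\ with law $F(\mathbf{H}_t)(\cdot\mid P_t)$, where $P_t=\mathrm{last}(\mathbf{H}_t)$.

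\textbf{Step 1 (per-round escape bound).} Fix $k<K$. On the event $\{\ell(P_t)=k\}$ the realised history $h=\mathbf{H}_t$ lies in $\Omn^{t+1}$ and satisfies $\ell(\mathrm{last}(h))=k$. Definition~\ref{def:full-qk} therefore gives $F(h)(\Akp\mid\mathrm{last}(h))\ge q_k^\infty$. Because this infimum is taken over all $t$ and all such histories, the bound holds pathwise and does not depend on how the history was reached. Conditional independence of the $M$ draws then yields
\[
\Prob\bigl(C_t\cap\Akp\neq\emptyset\mid\calF_t\bigr)\ \ge\ 1-(1-q_k^\infty)^M=p_k^\infty .
\]
If some $y\in C_t$ lies in $\Akp$, then $\varphi(y)>f_k$, and the top-$N$ rule of Assumption~\ref{ass:elitism} keeps $y$ or something fitter. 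Hence $f(P_{t+1})\ge\varphi(y)\ge v_{k+1}$, so $\ell(P_{t+1})\ge k+1$, with the cap at $K$ being harmless. This establishes \eqref{eq:escape-cond} with $p_k=p_k^\infty$. The bound $p_k^\infty>0$ follows from $q_k^\infty\ge\alpha|\Akp|>0$ under Assumption~\ref{ass:full-pos}.

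\textbf{Step 2 (summation).} Lemma~\ref{lem:sojourn} gives $\E[\tau_{k+1}-\tau_k]\le 1/p_k^\infty$ for each $k=k_0,\dots,K-1$. Lemma~\ref{lem:monotone} applies because its proof uses only elitism, not the form of $G_t$, so levels are non-decreasing. Consequently $\tau_{k_0}=0$ and $\tau_\varepsilon=\tau_K=\sum_{k=k_0}^{K-1}(\tau_{k+1}-\tau_k)$. This identity holds almost surely: all $\tau_k\le\tau_\varepsilon<\infty$ a.s.\ by Theorem~\ref{thm:full-conv}. Two further points need care. A level may be skipped, in which case some increments vanish; this only helps. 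The lemma also requires each $\tau_k<\infty$, which the same theorem supplies. Linearity of expectation, applied to the nonnegative increments, gives the stated sum.

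\textbf{Main obstacle.} The only delicate point is measurability in Step~1: checking that the conditioning in Lemma~\ref{lem:sojourn} is on the full history filtration and not on $P_t$ alone. Since $(P_t)$ is not Markov here, the escape bound must be uniform over all histories ending at level $k$. That uniformity is exactly what the double infimum in Definition~\ref{def:full-qk} provides. I would state this explicitly and observe that the proof of Lemma~\ref{lem:sojourn} used only $\calF_t$-measurability of the events $\{N_k\ge j\}$, which remains true for $\calF_t=\sigma(\mathbf{H}_t)$.
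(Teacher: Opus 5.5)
Your proposal is correct and follows the paper's proof. The paper likewise uses the history-uniform infimum in Definition~\ref{def:full-qk} to obtain the escape condition \eqref{eq:escape-cond} with $p_k=p_k^\infty$ on $\{\ell(P_t)=k\}$, then applies Lemma~\ref{lem:sojourn} and sums over levels exactly as in Theorem~\ref{thm:base-htb}. You additionally spell out the filtration, the elitism step, and the almost-sure finiteness, which the paper leaves implicit. One small point: the Markov property of $\mathbf{H}_t$ (Proposition~\ref{prop:history-markov}) is not actually needed. The algorithm's specification that $C_t$ is drawn i.i.d.\ from $F(\mathbf{H}_t)(\cdot\mid P_t)$ already gives the conditional law you use.
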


\begin{proof}
  By Definition~\ref{def:full-qk}, on $\{\ell(P_t)=k\}$ the conditional
  escape probability given $\calF_t$ is $\ge p_k^\infty$ regardless of
  the history $\mathbf{H}_t$.  Lemma~\ref{lem:sojourn} (with
  $p_k=p_k^\infty$) and summation over levels, exactly as in
  Theorem~\ref{thm:base-htb}, give the bound.
\end{proof}

\begin{remark}[Gain over the non-adaptive baseline]
  Let $q_k^0=\alpha|\Akp|$ be the escape probability of pure uniform
  exploration ($G_t\equiv U$).  If learning never hurts, $q_k^\infty\ge
  q_k^0$, so $\E[\tau_\varepsilon\mid\text{full memory}]\le
  \E[\tau_\varepsilon\mid\text{baseline}]$; the speed-up is captured by
  the ratios $q_k^\infty/q_k^0$.
\end{remark}

\paragraph{From regret to sojourn time.}

When the generative model is an online learner, its regret while sitting
at a level converts directly into a sojourn bound.  The next theorem is
\emph{exact} (no small-$q$ approximation): it uses only the identity
$1-p_k^{(s)}=(1-q_k^{(s)})^M$ and $\ln(1-q)\le-q$.

\begin{definition}[Per-round escape sequence at level $k$]
  While the chain sits at level $k$, index the rounds by $s=0,1,2,\dots$
  and let $q_k^{(s)}=G_{t}(\Akp\mid P_t)$ be the single-proposal escape
  probability at the $s$-th such round.  Let $q_k^\infty$ be the
  saturated value of Definition~\ref{def:full-qk}.
\end{definition}

\begin{theorem}[Regret-to-sojourn conversion]\label{thm:online}
  Suppose $q_k^{(s)}\ge q_k^\infty(1-r_s)$ for a sequence $r_s\ge0$ with
  finite total regret $R_\infty=\sum_{s\ge0}r_s<\infty$.  Then
  \begin{equation}
    \E[\tau_{k+1}-\tau_k]
    \ \le\
    \frac{e^{Mq_k^\infty R_\infty}}{\,1-e^{-Mq_k^\infty}\,}.
    \label{eq:online}
  \end{equation}
\end{theorem}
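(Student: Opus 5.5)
We mirror the proof of Lemma~\ref{lem:sojourn}, but with a round-dependent escape probability in place of a constant one. Write $N_k=\tau_{k+1}-\tau_k$ and index the rounds spent at level $k$ by $s=0,1,2,\dots$. The hypothesis gives, on $\{N_k\ge s+1\}$ (the chain is still at level $k$ in its $s$-th round), a conditional escape probability of at least $p_k^{(s)}=1-(1-q_k^{(s)})^M$. This holds because the $M$ proposals are i.i.d.\ given $\calF_t$, and by elitism any proposal in $\Akp$ lifts the level. Hence $\Prob(N_k\ge s+2\mid\calF_{\tau_k+s})\le(1-q_k^{(s)})^M$. Iterating by the tower property, exactly as in Lemma~\ref{lem:sojourn}, gives
\[
  \Prob(N_k\ge j)\ \le\ \prod_{s=0}^{j-2}\bigl(1-q_k^{(s)}\bigr)^M ,\qquad j\ge1,
\]
with the empty product equal to $1$ for $j=1$.

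Next we bound the product. Using $\ln(1-q)\le -q$ and $q_k^{(s)}\ge q_k^\infty(1-r_s)$, we get
\[
  \prod_{s=0}^{j-2}(1-q_k^{(s)})^M\le \exp\Bigl(-Mq_k^\infty\sum_{s=0}^{j-2}(1-r_s)\Bigr)
  =e^{-Mq_k^\infty (j-1)}\,e^{Mq_k^\infty\sum_{s\le j-2}r_s}\le e^{Mq_k^\infty R_\infty}\,e^{-Mq_k^\infty(j-1)}.
\]
The last step uses $r_s\ge0$, so that every partial sum is at most $R_\infty$. Note that $1-r_s$ may be negative, in which case the hypothesis is vacuous since $q_k^{(s)}\ge0$. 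The inequality remains valid anyway, because we only need $q_k^{(s)}\ge q_k^\infty(1-r_s)$ in the exponent.

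Finally, $\E[N_k]=\sum_{j\ge1}\Prob(N_k\ge j)\le e^{Mq_k^\infty R_\infty}\sum_{j\ge1}e^{-Mq_k^\infty(j-1)}$. The geometric series sums to $1/(1-e^{-Mq_k^\infty})$, which yields \eqref{eq:online}.

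The main obstacle is the conditioning bookkeeping. The quantity $q_k^{(s)}$ is random, since it depends on the history through $G_t$. The hypothesis must therefore be read as holding almost surely on the event that the $s$-th level-$k$ round occurs. The product bound then has to be derived as a deterministic upper bound on a supermartingale-type product rather than as a product of expectations. I would handle this by proving by induction that $\Prob(N_k\ge j)\le e^{-Mq_k^\infty\sum_{s\le j-2}(1-r_s)}$. The key point is that the pointwise bound at step $s$ is deterministic, so it can be pulled outside the conditional expectation. The remaining degenerate cases, $q_k^\infty=0$ (where the bound is trivially infinite) and $\tau_k=\infty$ on a null set (handled by Theorem~\ref{thm:full-conv}), are routine.
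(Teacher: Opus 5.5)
Your proof is correct and follows the paper's own argument. Both write the expected sojourn as a sum of survival products, bound each factor by $(1-q_k^{(s)})^M\le e^{-Mq_k^{(s)}}$ using $\ln(1-q)\le-q$, invoke the regret hypothesis with $r_s\ge0$ to cap the partial sums by $R_\infty$, and sum the resulting geometric series. Your induction with the deterministic per-round bound $e^{-Mq_k^\infty(1-r_s)}$ handles the randomness of $q_k^{(s)}$ more carefully than the paper does, since the paper simply writes the sojourn as an exact product of the escape factors.
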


\begin{proof}
  The sojourn equals $\sum_{s\ge0}\prod_{j=0}^{s-1}(1-p_k^{(j)})$.  Since
  $1-p_k^{(j)}=(1-q_k^{(j)})^M$ exactly,
  \[
    \prod_{j=0}^{s-1}(1-p_k^{(j)})
    =\exp\!\Bigl(M\sum_{j=0}^{s-1}\ln(1-q_k^{(j)})\Bigr)
    \le\exp\!\Bigl(-M\sum_{j=0}^{s-1}q_k^{(j)}\Bigr),
  \]
  using $\ln(1-q)\le-q$.  By hypothesis
  $\sum_{j=0}^{s-1}q_k^{(j)}\ge q_k^\infty\bigl(s-\sum_{j=0}^{s-1}r_j\bigr)
  \ge q_k^\infty(s-R_\infty)$, so
  \[
    \E[\tau_{k+1}-\tau_k]
    \le\sum_{s\ge0}e^{-Mq_k^\infty(s-R_\infty)}
    =e^{Mq_k^\infty R_\infty}\sum_{s\ge0}e^{-Mq_k^\infty s}
    =\frac{e^{Mq_k^\infty R_\infty}}{1-e^{-Mq_k^\infty}}.\qedhere
  \]
\end{proof}

\begin{remark}[Interpretation]\label{rem:online}
  As $Mq_k^\infty\to0$ with $R_\infty$ fixed, the right-hand side of
  \eqref{eq:online} equals $\frac1{Mq_k^\infty}(1+O(Mq_k^\infty))$,
  recovering the optimal sojourn $1/p_k^\infty\approx1/(Mq_k^\infty)$ of
  a learner that has already converged.  The factor $e^{Mq_k^\infty
  R_\infty}$ is the multiplicative penalty for the transient learning
  phase.  Finiteness of $R_\infty$ holds for any online method with
  per-step regret $r_s=O(s^{-1-\delta})$, $\delta>0$ (e.g.\ logarithmic
  cumulative regret); the borderline $O(\sqrt T)$ regime
  ($r_s\sim s^{-1/2}$) is \emph{not} summable and is flagged as
  a question we leave open.  We caution that
  $1-e^{-x}\le x$, so one may \emph{not} replace the denominator of
  \eqref{eq:online} by $Mq_k^\infty$; the correct bound is
  \eqref{eq:online} itself.
\end{remark}

The regret-to-sojourn conversion provides a direct bridge between the
online-learning literature and the hitting-time analysis of generative
selection.  It quantifies exactly how much the transient learning phase
slows convergence relative to the idealised saturated learner.

Full memory is an idealisation; in practice the model is retrained on a
sliding window of the last $m$ elite pools.  The remainder of this section
shows that a window of depth $m$ yields a \emph{time-homogeneous} Markov
chain on $\Omn^m$, and quantifies how the hitting time improves with $m$.

\subsection{The window chain and its hitting-time bound}

\begin{definition}[$m$-step memory kernel]\label{def:m-kernel}
  Fix $m\ge1$ and a measurable $G\colon\Omn^m\to\G$.  The \emph{$m$-step}
  algorithm runs \Cref{alg:gen-sel} with
  $G_t=G\bigl(P_{\max(0,t-m+1)},\dots,P_t\bigr)$; for $t\ge m-1$ this is
  $G(P_{t-m+1},\dots,P_t)$.  The \emph{window process} is
  \[
    S_t^m=(P_{t-m+1},\dots,P_t)\in\Omn^m,\qquad t\ge m-1.
  \]
\end{definition}

\begin{theorem}[$m$-step Markov property]\label{thm:m-markov}
  For $t\ge m-1$, $(S_t^m)$ is a time-homogeneous Markov chain on
  $\Omn^m$.
\end{theorem}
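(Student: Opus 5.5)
The plan is to mirror the proofs of Proposition~\ref{prop:markov} and Proposition~\ref{prop:history-markov}. The only new work is to show that the transition law does not depend on $t$. I will exhibit an explicit kernel $Q_m$ on $\Omn^m$ that does not depend on $t$, and verify that
\[
  \Prob\bigl(S_{t+1}^m\in\cdot\mid\calF_t\bigr)=Q_m(S_t^m,\cdot)
\]
a.s.\ for every $t\ge m-1$. Here $\calF_t=\sigma(P_0,\dots,P_t)$ is the natural filtration; it contains $\sigma(S_{m-1}^m,\dots,S_t^m)$, so the Markov property with respect to the window's own past follows by the tower property.

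\textbf{Step 1: construct the kernel.} For $s=(Q_1,\dots,Q_m)\in\Omn^m$, let $C\sim G(s)(\cdot\mid Q_m)^{\otimes M}$ and put $P'=\topN(Q_m\cup C)$ with a fixed tie-breaking rule. Define $Q_m(s,\cdot)$ as the law of the shifted window $(Q_2,\dots,Q_m,P')$. Measurability of $s\mapsto Q_m(s,A)$ follows from the measurability of $G$ in Definition~\ref{def:m-kernel}. The spaces $\Omn^m$ and $\X$ are finite, so this is only a finite sum of products of the entries $G(s)(y\mid Q_m)$.

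\textbf{Step 2: identify the one-step law.} For $t\ge m-1$, Definition~\ref{def:m-kernel} gives $G_t=G(S_t^m)$. This is a fixed function of the current window, whose form does not change with $t$; this is exactly where $t\ge m-1$ is used. Fix $t$. Conditionally on $\calF_t$, the proposals $C_t$ are i.i.d.\ from $G(S_t^m)(\cdot\mid P_t)$, and $P_t$ is the last coordinate of $S_t^m$. Hence $P_{t+1}=\topN(P_t\cup C_t)$ has conditional law determined by $S_t^m$ alone. The other coordinates of $S_{t+1}^m=(P_{t-m+2},\dots,P_t,P_{t+1})$ are deterministic shifts of $S_t^m$. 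Together these give $\Prob(S_{t+1}^m\in A\mid\calF_t)=Q_m(S_t^m,A)$.

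\textbf{Main obstacle.} The main difficulty is bookkeeping rather than substance. Two points need care. First, the proposals must be fresh randomness, conditionally independent of $\calF_t$ given $G_t$ and $P_t$; Algorithm~\ref{alg:gen-sel} assumes this implicitly, and the proof will state it explicitly. Second, the tie-breaking in $\topN$ must be either deterministic or use independent randomness with a $t$-independent law; otherwise homogeneity could fail. For $t<m-1$ the kernel uses a truncated window, so the process is inhomogeneous there. This is why the statement starts at $t=m-1$, and the proof will note that $S_{m-1}^m$ simply serves as the initial state.
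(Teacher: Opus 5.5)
Your proposal is correct and takes the same route as the paper. In both, the conditional law of $P_{t+1}$ given the past depends only on $S_t^m$ through $G(S_t^m)(\cdot\mid P_t)$, the next window is $S_t^m$ shifted with $P_{t+1}$ appended, and homogeneity holds because $G$ has no explicit $t$-dependence. You make the kernel explicit and state the fresh-randomness and tie-breaking assumptions that the paper leaves implicit under ``ties broken arbitrarily''. One cosmetic fix: calling the kernel $Q_m$ clashes with $Q_m$ as the last coordinate of $s=(Q_1,\dots,Q_m)$, so use another letter, e.g.\ $\kappa_m$.
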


\begin{proof}
  Write $S_t^m=(Q_1,\dots,Q_m)$ with $Q_m=P_t$.  Step~3 gives
  $P_{t+1}=\topN(Q_m\cup C_t)$, $C_t\sim G(S_t^m)(\cdot\mid Q_m)^{\otimes
  M}$, so $P_{t+1}$ is conditionally independent of
  $(P_0,\dots,P_{t-m})$ given $S_t^m$.  The next window
  $S_{t+1}^m=(Q_2,\dots,Q_m,P_{t+1})$ is a deterministic function of
  $S_t^m$ and $P_{t+1}$, and the transition law is the same for all
  $t\ge m-1$ because $G$ has no explicit time dependence.
\end{proof}

\begin{remark}
  At $m=1$, $S_t^1=P_t$ and we recover the single-step chain of
  Sections~\ref{sec:static} and~\ref{sec:layered}.  As $m$ grows, more of
  the learning history is encoded in the state; $m\to\infty$ recovers the
  full-memory setting of Section~\ref{sec:full}.
\end{remark}

\paragraph{Convergence and the layered bound.}

\begin{assumption}[$m$-step uniform positivity]\label{ass:m-pos}
  There is $\alpha>0$ with $G(S)(x\mid Q_m)\ge\alpha$ for all
  $S=(Q_1,\dots,Q_m)\in\Omn^m$ and all $x\in\X$.
\end{assumption}

\begin{definition}[$m$-step escape probabilities]\label{def:m-qk}
  For $k=1,\dots,K-1$,
  \[
    q_k^m=\inf_{\substack{S=(Q_1,\dots,Q_m)\in\Omn^m\\\ell(Q_m)=k}}
          G(S)\!\bigl(\Akp\mid Q_m\bigr),
    \qquad
    p_k^m=1-(1-q_k^m)^M .
  \]
\end{definition}

\begin{theorem}[Convergence and Layered Bound, $m$-step memory]
\label{thm:m-htb}
  Under Assumptions~\ref{ass:elitism} and~\ref{ass:m-pos}:
  $\Prob(\tau_\varepsilon<\infty)=1$ with geometric tail
  $(1-\beta)^T$; and for any $P_0$ with $\ell(P_0)=k_0$,
  \[
    \E[\tau_\varepsilon\mid P_0]
    \ \le\ \sum_{k=k_0}^{K(\varepsilon)-1}\frac1{p_k^m}
    \ =\ \sum_{k=k_0}^{K(\varepsilon)-1}\frac1{1-(1-q_k^m)^M}.
  \]
\end{theorem}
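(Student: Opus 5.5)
The plan is to reuse the template of Theorems~\ref{thm:base-conv} and~\ref{thm:base-htb}, with the conditioning done on the natural filtration $(\calF_t)$. The only structural difference is the warm-up phase $t<m-1$, during which the window is not yet full and $G_t=G(P_0,\dots,P_t)$ is evaluated on a shorter argument. We read Definition~\ref{def:m-kernel} as saying that Assumption~\ref{ass:m-pos} and the infimum in Definition~\ref{def:m-qk} also cover these truncated windows. One clean way to arrange this is to pad the window by repeating $P_0$, so that $G_t=G(P_0,\dots,P_0,P_1,\dots,P_t)$ and every round's proposal law is $G(S)(\cdot\mid Q_m)$ for some $S\in\Omn^m$ with $Q_m=P_t$. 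We state this convention explicitly. With it, the bounds hold from $t=0$, and Theorem~\ref{thm:m-markov} is not needed for the estimates themselves; it only certifies the Markov structure of the window process.

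\emph{Step 1 (convergence and geometric tail).} Fix $x^*\in\Neps$. On $\{P_t\in\Omt\}$ the conditional law of $C_t$ given $\calF_t$ is $G(S_t^m)(\cdot\mid P_t)^{\otimes M}$. By Assumption~\ref{ass:m-pos}, each proposal equals $x^*$ with probability at least $\alpha$, so
\[
  \Prob(P_{t+1}\in\Oms\mid\calF_t)\ge\beta=1-(1-\alpha)^M .
\]
Lemma~\ref{lem:monotone} shows that $\Oms$ is absorbing. Iterating the conditional bound via the tower property then gives $\Prob(\tau_\varepsilon>T)\le(1-\beta)^T$, and hence $\tau_\varepsilon<\infty$ almost surely, exactly as in the proof of Theorem~\ref{thm:base-conv}.

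\emph{Step 2 (per-level escape).} On $\{\ell(P_t)=k\}$ with $k<K$, the current window $S_t^m$ has last component $Q_m=P_t$ at level $k$. By Definition~\ref{def:m-qk}, each proposal therefore lies in $\Akp$ with conditional probability at least $q_k^m$, whatever the earlier pools $Q_1,\dots,Q_{m-1}$ are. The proposals are conditionally i.i.d., so
\[
  \Prob(\exists\,y\in C_t\cap\Akp\mid\calF_t)\ge p_k^m .
\]
If such a $y$ is proposed, elitism retains it or something at least as fit, so $\ell(P_{t+1})\ge k+1$. This is condition~\eqref{eq:escape-cond} with $p_k=p_k^m$. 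Lemma~\ref{lem:sojourn} then gives $\E[\tau_{k+1}-\tau_k]\le 1/p_k^m$. Since $\tau_{k_0}=0$ and $\tau_\varepsilon=\tau_K$ is almost surely finite by Step~1, telescoping over $k=k_0,\dots,K(\varepsilon)-1$ yields the stated bound. Levels skipped in a single jump have $\tau_{k+1}=\tau_k$ and contribute zero.

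\emph{Main obstacle.} The main risk is the uniformity claim in Step~2 rather than any computation. The infimum must be taken over all windows whose \emph{last} pool is at level $k$, including windows containing stale, lower-level pools left over from before the last jump and the padded warm-up windows. If the infimum were over a smaller set, say windows consistent with the current sojourn, the bound would need an extra argument. Because Definition~\ref{def:m-qk} ranges over all of $\Omn^m$ with $\ell(Q_m)=k$, the conditional bound holds pathwise and the sojourn lemma applies with no further work. The remaining task is to handle $t<m-1$ through the padding convention above.
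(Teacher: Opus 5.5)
Your proposal is correct and follows the paper's proof. Convergence and the geometric tail go exactly as in Theorem~\ref{thm:base-conv} with the floor $\alpha$ of Assumption~\ref{ass:m-pos}, and the per-level bound comes from $q_k^m$ being an infimum over all windows with $\ell(Q_m)=k$, so \eqref{eq:escape-cond} holds with $p_k=p_k^m$ and Lemma~\ref{lem:sojourn} plus summation finishes. Your padding convention for the warm-up rounds $t<m-1$ usefully fills a detail the paper passes over: it defines $G$ only on $\Omn^m$, yet allows shorter argument lists before the window is full.
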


\begin{proof}
  Convergence and the tail follow as in Theorem~\ref{thm:base-conv} with
  $\alpha$ from Assumption~\ref{ass:m-pos}.  By
  Definition~\ref{def:m-qk}, on $\{\ell(P_t)=k\}$ the conditional escape
  probability given $\calF_t$ is $\ge p_k^m$ for every window with
  $\ell(Q_m)=k$; Lemma~\ref{lem:sojourn} and summation give the bound.
\end{proof}

\subsection{Monotonicity in memory depth}

The central structural result of this section is that, under a natural
condition, more memory never hurts: the escape probabilities are
non-decreasing in the memory depth $m$, and therefore the expected hitting
time is non-increasing.

\begin{assumption}[Monotone learning]\label{ass:mono}
  For all $m\ge2$, all $(Q_1,\dots,Q_m)\in\Omn^m$, and all $k$,
  \[
    G(Q_1,\dots,Q_m)\!\bigl(\Akp\mid Q_m\bigr)
    \ \ge\
    G(Q_2,\dots,Q_m)\!\bigl(\Akp\mid Q_m\bigr).
  \]
  That is, conditioning on one more past pool does not decrease the
  probability of proposing above level $k$.
\end{assumption}

\begin{remark}
  Assumption~\ref{ass:mono} concerns the \emph{learner}, not the
  landscape.  It holds for Bayesian posteriors (more data cannot worsen
  expected predictive performance under a proper prior), for
  maximum-likelihood estimators with monotone sufficient statistics, and
  for fine-tuned models trained on more samples from a higher-quality
  elite distribution.  When it holds, we can prove a clean monotonicity
  theorem; when it fails, the analysis of Section~\ref{sec:nonmonotone}
  applies.
\end{remark}

\begin{theorem}[Monotonicity in $m$]\label{thm:mono}
  Under Assumption~\ref{ass:mono}, $q_k^m\ge q_k^{m-1}$ for all $m\ge2$
  and all $k$.  Hence $p_k^m\ge p_k^{m-1}$ and
  \[
    \E[\tau_\varepsilon\mid m\text{-step}]
    \ \le\
    \E[\tau_\varepsilon\mid(m-1)\text{-step}].
  \]
\end{theorem}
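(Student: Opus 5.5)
The plan is to compare the two infima in Definition~\ref{def:m-qk} directly, then push the comparison through the monotone map $q\mapsto1-(1-q)^M$, and finally read the hitting-time statement off the layered bound of Theorem~\ref{thm:m-htb}. Throughout I take the family of kernels $G\colon\Omn^m\to\G$ for different $m$ to be linked as Assumption~\ref{ass:mono} requires. That is, the depth-$(m-1)$ kernel is the one obtained by dropping the oldest pool.

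For the first claim, fix $k$ and any window $S=(Q_1,\dots,Q_m)\in\Omn^m$ with $\ell(Q_m)=k$. Its truncation $S'=(Q_2,\dots,Q_m)\in\Omn^{m-1}$ has the same last pool, so $\ell(Q_m)=k$ and $S'$ is admissible in the infimum defining $q_k^{m-1}$. Assumption~\ref{ass:mono} then gives
\[
  G(S)(\Akp\mid Q_m)\ \ge\ G(S')(\Akp\mid Q_m)\ \ge\ q_k^{m-1}.
\]
Taking the infimum over all such $S$ yields $q_k^m\ge q_k^{m-1}$. No reverse surjectivity argument is needed, since each depth-$m$ window lower-bounds through its own truncation. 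Because $q\mapsto1-(1-q)^M$ is non-decreasing on $[0,1]$, it follows that $p_k^m\ge p_k^{m-1}$.

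For the hitting-time comparison, I will read the displayed inequality as a comparison of the guaranteed bounds from Theorem~\ref{thm:m-htb}. Summand by summand, $1/p_k^m\le1/p_k^{m-1}$, and both are finite because $q_k^{m-1}\ge\alpha|\Akp|>0$. Hence
\[
  \sum_{k=k_0}^{K-1}\frac1{p_k^m}\ \le\ \sum_{k=k_0}^{K-1}\frac1{p_k^{m-1}}.
\]

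This last step is the main obstacle. Assumption~\ref{ass:mono} compares the two kernels only at the \emph{same} window, while the two algorithms generate different trajectories, so a pathwise coupling of the actual hitting times is not available in general. I therefore plan to state explicitly that the inequality $\E[\tau_\varepsilon\mid m\text{-step}]\le\E[\tau_\varepsilon\mid(m-1)\text{-step}]$ is meant as the ordering of the layered upper bounds. That is the only form the stated hypotheses support.

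If a statement about the actual expectations is insisted upon, I would try a coupling by induction over rounds. The idea is to run both chains from the same $P_0$ and couple the proposals so that the $m$-step chain's level dominates stochastically, using elitism (Lemma~\ref{lem:monotone}) for persistence. I expect this to fail without additional monotonicity of $G$ in the pools themselves, so I would not rely on it.
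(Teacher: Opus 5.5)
Your argument is correct and is essentially the paper's proof: truncate a depth-$m$ window to its last $m-1$ pools, apply Assumption~\ref{ass:mono}, take the infimum, and then use that $q\mapsto1-(1-q)^M$ is increasing and $p\mapsto1/p$ is decreasing. The paper's proof likewise only establishes the ordering $\sum_k 1/p_k^m\le\sum_k 1/p_k^{m-1}$ of the layered bounds of Theorem~\ref{thm:m-htb}, so reading the displayed inequality as a comparison of these guaranteed bounds is accurate and more careful than the statement's literal wording; Assumption~\ref{ass:mono} alone does not order the actual hitting times.
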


\begin{proof}
  Fix $S=(Q_1,\dots,Q_m)$ with $\ell(Q_m)=k$.  By
  Assumption~\ref{ass:mono},
  \[
    G(Q_1,\dots,Q_m)(\Akp\mid Q_m)
    \ge G(Q_2,\dots,Q_m)(\Akp\mid Q_m)
    \ge q_k^{m-1},
  \]
  the last step because $(Q_2,\dots,Q_m)$ is an $(m-1)$-window with
  $\ell(Q_m)=k$ and $q_k^{m-1}$ is the infimum over all such windows.
  Taking the infimum over $S$ gives $q_k^m\ge q_k^{m-1}$.  Monotonicity
  of $p_k^m$ in $q_k^m$ and of $1/p_k^m$ complete the proof.
\end{proof}

\begin{corollary}[Monotone chain of escape probabilities]\label{cor:chain}
  With $q_k^0=\alpha|\Akp|$ (uniform-exploration baseline), under
  Assumptions~\ref{ass:m-pos} and~\ref{ass:mono},
  \[
    q_k^0\le q_k^1\le q_k^2\le\cdots\le q_k^m\ \nearrow\ q_k^\infty
    \quad(m\to\infty),
  \]
  and correspondingly for $p_k^m$ and for the hitting-time bounds.
\end{corollary}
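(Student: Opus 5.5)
The plan is to assemble the chain from three separate facts: a bottom link from uniform positivity, the middle links from Theorem~\ref{thm:mono}, and an identification of the limit. Throughout I read the $m$-step kernels as one family $G\colon\bigcup_{m\ge1}\Omn^m\to\G$ evaluated on windows of every length. Assumption~\ref{ass:mono} already presupposes this reading, since it compares $G$ on windows of lengths $m$ and $m-1$.

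\emph{Bottom link $q_k^0\le q_k^1$.} Fix $Q_1$ with $\ell(Q_1)=k$. By Assumption~\ref{ass:m-pos} at $m=1$,
\[
G(Q_1)(\Akp\mid Q_1)=\sum_{y\in\Akp}G(Q_1)(y\mid Q_1)\ge\alpha|\Akp|=q_k^0 .
\]
Taking the infimum over all such $Q_1$ gives $q_k^1\ge q_k^0$. This is the same computation as the Remark following Definition~\ref{def:qk}.

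\emph{Middle links.} For each $m\ge2$, Theorem~\ref{thm:mono} gives $q_k^{m-1}\le q_k^m$. The map $q\mapsto1-(1-q)^M$ is non-decreasing on $[0,1]$, so the $p_k^m$ inherit the same ordering. Since $1/p$ is decreasing, so do the summands of the layered bounds in Theorem~\ref{thm:m-htb}, and hence the bounds themselves, summed over $k=k_0,\dots,K(\varepsilon)-1$. At the bottom this also covers the uniform baseline with $p_k^0=1-(1-q_k^0)^M$.

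\emph{The limit.} The sequence $(q_k^m)_m$ is non-decreasing and bounded by $1$, so $\bar q_k:=\lim_m q_k^m=\sup_m q_k^m$ exists. By continuity of $q\mapsto1-(1-q)^M$ we get $p_k^m\nearrow1-(1-\bar q_k)^M$, and the layered bounds decrease to the corresponding limit.

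The remaining step, and the one I expect to be the main obstacle, is identifying $\bar q_k$ with $q_k^\infty$. I would take the full-memory kernel to be the same family applied to the whole history, $F(h)=G(h)$. I would then read $q_k^\infty$ as the \emph{saturated} value referred to before Theorem~\ref{thm:online}: the infimum over histories of length at least $t$, letting $t\to\infty$.

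For a history $h$ of length $n\ge t$ with $\ell(\mathrm{last}(h))=k$, the window $h$ itself is admissible in Definition~\ref{def:m-qk} with $m=n$. Hence $G(h)(\Akp\mid\mathrm{last}(h))\ge q_k^n\ge q_k^t$, and the inner infimum equals $\inf_{n\ge t}q_k^n=q_k^t$. So the saturated value is $\lim_t q_k^t=\bar q_k$.

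I would flag explicitly that the literal infimum over \emph{all} $t\ge0$ in Definition~\ref{def:full-qk} includes length-one histories. Under the identification $F(h)=G(h)$ it therefore equals $q_k^1$, and the arrow ``$\nearrow q_k^\infty$'' is only meaningful under the saturated reading. The proof should state that convention rather than hide it.
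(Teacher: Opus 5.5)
Your proof is correct. For the ordering it is the argument the paper relies on. The paper writes no separate proof: the bottom link $q_k^0\le q_k^1$ is the positivity remark following Definition~\ref{def:qk}, read at depth $m=1$, and the middle links are Theorem~\ref{thm:mono}. Monotonicity of $q\mapsto1-(1-q)^M$ and of $1/p$ then carries the ordering to $p_k^m$ and to the layered bounds.

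The limit is where you differ. The paper never derives $q_k^m\nearrow q_k^\infty$ from Definition~\ref{def:full-qk}. It relies on the remark that $m\to\infty$ ``recovers'' full memory, and in Theorem~\ref{thm:limit} it simply assumes the convergence (e.g.\ under \eqref{eq:sat}). Your observation that, once $F=G$, the literal infimum over all $t\ge0$ collapses to $q_k^1$ is correct. The saturated reading you adopt is the one the paper itself invokes just before Theorem~\ref{thm:online}, and under it your identification $q_k^\infty=\sup_m q_k^m$ is sound.

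One caveat is worth recording with that convention. With the saturated value, $\sum_k 1/p_k^\infty$ is the limit of the $m$-step bounds. It is not, by itself, a bound for the full-memory algorithm obtainable from Lemma~\ref{lem:sojourn}, because early rounds with short histories can escape with probability below the saturated value. Theorem~\ref{thm:online} exists precisely to account for that transient.
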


\paragraph{Saturation model and closed-form sojourn estimates.}

To make Theorem~\ref{thm:mono} quantitative we posit a parametric law
for the growth of $q_k^m$ with $m$.

\begin{definition}[Exponential saturation model]\label{def:sat}
  For each level $k$ there exist $q_k^0>0$, $q_k^\infty\ge q_k^0$, and a
  \emph{learning rate} $c_k>0$ with
  \begin{equation}
    q_k^m=q_k^\infty-(q_k^\infty-q_k^0)\,e^{-c_k m},\qquad m\ge0.
    \label{eq:sat}
  \end{equation}
\end{definition}

In the saturation regime the escape probability when the window holds
$j$ level-$k$ pools is $q_k^j$; thus during a sojourn at level $k$ the
$s$-th round (counting from entry) has effective escape probability
$q_k^{\min(s,m)}$, since after $m$ rounds the window is saturated with
level-$k$ data.

\begin{proposition}[Closed-form sojourn under saturation]
\label{prop:sojourn-sat}
  Under \eqref{eq:sat} and the windowing just described, the expected
  sojourn at level $k$ with $m$-step memory is
  \begin{equation}
    \E[\tau_{k+1}-\tau_k]
    \ =\ A_k(m)+\frac{B_k(m)}{p_k^m},
    \label{eq:sojourn-sat}
  \end{equation}
  where, with $p_k^j=1-(1-q_k^j)^M$,
  \[
    A_k(m)=\sum_{s=0}^{m-1}\prod_{j=0}^{s-1}(1-p_k^j)\le m,
    \qquad
    B_k(m)=\prod_{j=0}^{m-1}(1-p_k^j)\le(1-p_k^0)^m.
  \]
  Consequently:
  \emph{(a)} $B_k(m)\to0$ exponentially and
  $\E[\tau_{k+1}-\tau_k]\to1/p_k^\infty$ as $m\to\infty$;
  \emph{(b)} at $m=0$, $A_k(0)=0$, $B_k(0)=1$, giving $1/p_k^0$;
  \emph{(c)} the saving $\Delta_k(m)=\tfrac1{p_k^0}-A_k(m)-B_k(m)/p_k^m\ge0$
  is non-decreasing in $m$.
\end{proposition}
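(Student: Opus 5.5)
The plan is to compute the sojourn law exactly from the windowing rule and then split the tail sum at $s=m$. Under the saturation convention just described, I interpret ``effective escape probability'' literally: conditional on having survived $s$ rounds at level $k$, the next round escapes with probability exactly $p_k^{(s)}:=p_k^{\min(s,m)}=1-(1-q_k^{\min(s,m)})^M$. This is the only modelling input; it replaces the inequality \eqref{eq:escape-cond} of Lemma~\ref{lem:sojourn} by an equality. Writing $N_k=\tau_{k+1}-\tau_k$, the argument of Lemma~\ref{lem:sojourn}, with equalities throughout, gives
\[
\Prob(N_k>s)=\prod_{j=0}^{s-1}(1-p_k^{(j)}),
\qquad
\E[N_k]=\sum_{s\ge0}\Prob(N_k>s).
\]

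\emph{The identity.} I split $\E[N_k]=\sum_{s\ge0}\Prob(N_k>s)$ into the ranges $s<m$ and $s\ge m$. For $s<m$ we have $\min(j,m)=j$ for all $j<s$, so the partial sum is exactly $A_k(m)$. For $s\ge m$ the product factors as
\[
\prod_{j=0}^{m-1}(1-p_k^j)\cdot(1-p_k^m)^{s-m}=B_k(m)\,(1-p_k^m)^{s-m},
\]
and the geometric series sums to $B_k(m)/p_k^m$. This proves \eqref{eq:sojourn-sat}.

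\emph{The bounds.} For $A_k(m)\le m$: it has $m$ summands, each a product of factors in $[0,1]$. For $B_k(m)\le(1-p_k^0)^m$: by \eqref{eq:sat}, $q_k^j$ is non-decreasing in $j$ because $q_k^\infty\ge q_k^0$ and $e^{-c_kj}$ decreases. Hence $p_k^j\ge p_k^0$ for every $j$. Part (b) is immediate from the empty sum and the empty product.

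\emph{Part (c).} Set $S(m)=A_k(m)+B_k(m)/p_k^m$. Using $A_k(m+1)=A_k(m)+B_k(m)$ and $B_k(m+1)=B_k(m)(1-p_k^m)$, I will compute the telescoping difference
\[
S(m)-S(m+1)=B_k(m)\,(1-p_k^m)\Bigl(\frac1{p_k^m}-\frac1{p_k^{m+1}}\Bigr)\ \ge\ 0,
\]
which is non-negative because $p_k^{m+1}\ge p_k^m$. Therefore $S$ is non-increasing and $\Delta_k(m)=S(0)-S(m)$ is non-negative and non-decreasing.

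\emph{Part (a), the expected main obstacle.} The bound $B_k(m)\le(1-p_k^0)^m$ gives exponential decay of $B_k(m)$ directly. The convergence of the whole sojourn is more delicate. Since $A_k(m)$ increases to $A_k(\infty)=\sum_{s}\prod_{j<s}(1-p_k^j)$, the literal limit is $A_k(\infty)$. This is the sojourn of a learner whose escape probability keeps improving, and it is generally at least $1/p_k^\infty$ rather than equal to it. To relate $A_k(\infty)$ to $1/p_k^\infty$, I would apply Theorem~\ref{thm:online} with
\[
r_s=\Bigl(1-\frac{q_k^0}{q_k^\infty}\Bigr)e^{-c_ks},
\]
so that $q_k^{s}\ge q_k^\infty(1-r_s)$ and $R_\infty=\dfrac{1-q_k^0/q_k^\infty}{1-e^{-c_k}}<\infty$. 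The theorem then bounds the limit by
\[
\frac{e^{Mq_k^\infty R_\infty}}{1-e^{-Mq_k^\infty}},
\]
which approaches $1/p_k^\infty$ up to the transient factor $e^{Mq_k^\infty R_\infty}$, and that factor tends to $1$ as $c_k\to\infty$. So I expect (a) to hold only in this asymptotic sense, with $A_k(\infty)$ in place of $1/p_k^\infty$ for finite $c_k$. I would state that caveat explicitly rather than claim exact equality in the limit.
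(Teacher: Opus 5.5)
Your proof of \eqref{eq:sojourn-sat} and of the bounds on $A_k(m)$ and $B_k(m)$ is the paper's argument: split the survival sum at $s=m$ and sum the geometric tail. You improve on the paper in two small ways. First, you obtain $p_k^j\ge p_k^0$ directly from \eqref{eq:sat}, whereas the paper cites Corollary~\ref{cor:chain} and so needlessly invokes Assumption~\ref{ass:mono}. Second, your identity $S(m)-S(m+1)=B_k(m)(1-p_k^m)\bigl(1/p_k^m-1/p_k^{m+1}\bigr)\ge0$ is a genuine proof of (c), which the paper only calls immediate.

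Your objection to the second half of (a) is correct. The gap lies in the statement, not in your proposal; the paper's proof simply asserts (a) as immediate. The argument runs as follows:
\begin{itemize}
\item $B_k(m)/p_k^m\le(1-p_k^0)^m/p_k^0\to0$ and $A_k(m)\nearrow A_k(\infty)$, so the sojourn converges to $A_k(\infty)=\sum_{s\ge0}\prod_{j<s}(1-p_k^j)$.
\item Since $1-p_k^j\ge1-p_k^\infty$ for every $j$, strictly at $j=0$, you get $A_k(\infty)>1/p_k^\infty$ whenever $q_k^0<q_k^\infty$.
\end{itemize}
With the parameters of Example~\ref{ex:bottleneck}, $A_{k^*}(\infty)\approx2.99$, while $1/p_{k^*}^\infty\approx1.58$.

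One correction to your caveat: the transient factor does not tend to $1$ as $c_k\to\infty$. The entry round always uses $q_k^0$, because $e^{-c_k\cdot0}=1$. Hence $R_\infty\to1-q_k^0/q_k^\infty$, and in fact $A_k(\infty)\to1+(1-p_k^0)/p_k^\infty$. This still exceeds $1/p_k^\infty$ by $(p_k^\infty-p_k^0)/p_k^\infty$.

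The regime in which $1/p_k^\infty$ is recovered is the one of Remark~\ref{rem:online}. Combine the lower bound $A_k(\infty)\ge1/p_k^\infty$ with your Theorem~\ref{thm:online} upper bound, and use $p_k^\infty\le Mq_k^\infty$. This gives $p_k^\infty A_k(\infty)\to1$ as $Mq_k^\infty\to0$ with $R_\infty$ fixed. So the limit is $1/p_k^\infty$ only up to a vanishing relative error, not exactly.
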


\begin{proof}
  The escape probability at round $s$ is $p_k^{\min(s,m)}$, so for
  $s\ge m$ it is constant at $p_k^m$.  Hence
  \[
    \E[\tau_{k+1}-\tau_k]
    =\sum_{s=0}^{\infty}\prod_{j=0}^{s-1}(1-p_k^{\min(j,m)})
    =\underbrace{\sum_{s=0}^{m-1}\prod_{j=0}^{s-1}(1-p_k^j)}_{A_k(m)}
     +\underbrace{\prod_{j=0}^{m-1}(1-p_k^j)}_{B_k(m)}
       \sum_{i=0}^{\infty}(1-p_k^m)^i,
  \]
  and the geometric sum is $1/p_k^m$.  The bound $A_k(m)\le m$ holds
  since each summand $\le1$; $B_k(m)\le(1-p_k^0)^m$ holds because
  $p_k^j\ge p_k^0$ (Corollary~\ref{cor:chain}).  Parts (a)--(c) are
  immediate.
\end{proof}

\paragraph{Explicit decay.}
In the small-$q$ regime $p_k^j\approx Mq_k^j$ and, using
$1-p_k^j=(1-q_k^j)^M$,
\[
  B_k(m)\le\exp\!\Bigl(-M\sum_{j=0}^{m-1}q_k^j\Bigr)
  =\exp\!\Bigl(-Mmq_k^\infty
    +\tfrac{M(q_k^\infty-q_k^0)(1-e^{-c_km})}{1-e^{-c_k}}\Bigr),
\]
so $B_k(m)$ decays like $e^{-Mmq_k^\infty}$ and the $m$-step bound
approaches the full-memory bound $\sum_k1/p_k^\infty$ exponentially fast
in $m$.

\subsection{A worked example: relief of a single bottleneck}
\label{sec:example}

We illustrate the memory hierarchy with a stylised landscape that
isolates the effect of learning at one bottleneck level.

\begin{example}\label{ex:bottleneck}
  Suppose all levels except one are easy: $q_k^m\approx1$ for $k\ne k^*$,
  so their sojourns are negligible, and the hitting time is governed by
  the bottleneck $k^*$.  Take a proposal budget $M=100$ and a saturation
  model \eqref{eq:sat} at $k^*$ with
  \[
    q_{k^*}^0=10^{-3},\qquad q_{k^*}^\infty=10^{-2},\qquad c_{k^*}=0.5 .
  \]
  Using $q_{k^*}^m=q_{k^*}^\infty-(q_{k^*}^\infty-q_{k^*}^0)e^{-0.5m}$ and
  $p_{k^*}^m=1-(1-q_{k^*}^m)^M$, the saturated per-round escape
  probability and the expected bottleneck sojourn $1/p_{k^*}^m$ are:

  \begin{center}
  \renewcommand{\arraystretch}{1.25}
  \begin{tabular}{@{}cccc@{}}
  \toprule
  memory $m$ & $q_{k^*}^m$ & $p_{k^*}^m=1-(1-q_{k^*}^m)^{100}$
    & sojourn $1/p_{k^*}^m$ \\
  \midrule
  $0$        & $0.00100$ & $0.0952$ & $10.51$ \\
  $1$        & $0.00454$ & $0.366$  & $2.74$  \\
  $2$        & $0.00669$ & $0.489$  & $2.05$  \\
  $4$        & $0.00878$ & $0.586$  & $1.71$  \\
  $8$        & $0.00984$ & $0.628$  & $1.59$  \\
  $\infty$   & $0.01000$ & $0.632$  & $1.58$  \\
  \bottomrule
  \end{tabular}
  \end{center}

  The no-memory algorithm needs $\approx10.5$ rounds to clear the
  bottleneck; a single step of memory already cuts this to $\approx2.7$
  rounds, and the saturated value $\approx1.58$ rounds is essentially
  reached by $m=8$. The overall speed-up from memory at this bottleneck
  is a factor $10.51/1.58\approx6.6$. This is exactly the
  diminishing-returns structure of
  Proposition~\ref{prop:sojourn-sat}: the marginal benefit of each
  additional memory step decays geometrically, so a modest window
  ($m\approx 5$--$8$ here) captures almost all of the achievable
  acceleration. The same numbers, read through
  Corollary~\ref{cor:bottleneck}, bound the memory-adjusted difficulty
  $1/q_{k^*}^m$ falling from $1000$ to $100$.
\end{example}

\subsection{The memory hierarchy}
\label{sec:comparison}

\begin{theorem}[Memory hierarchy]\label{thm:hierarchy}
  Under Assumptions~\ref{ass:elitism}, \ref{ass:full-pos},
  \ref{ass:m-pos}, and~\ref{ass:mono}, for all $m\ge1$,
  \[
    \underbrace{\sum_k\frac1{p_k^\infty}}_{\text{full memory}}
    \le\cdots\le
    \underbrace{\sum_k\frac1{p_k^m}}_{m\text{-step}}
    \le
    \underbrace{\sum_k\frac1{p_k^{m-1}}}_{(m-1)\text{-step}}
    \le\cdots\le
    \underbrace{\sum_k\frac1{p_k^1}}_{\text{single-step}}
    \le
    \underbrace{\sum_k\frac1{p_k^0}}_{\text{no memory}} .
  \]
\end{theorem}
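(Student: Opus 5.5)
The plan is to derive the chain of inequalities term by term from the ordering of the escape probabilities, then sum over levels. All sums run over the same range $k=k_0,\dots,K(\varepsilon)-1$, so it suffices to prove the levelwise ordering
\[
  p_k^0\le p_k^1\le\cdots\le p_k^{m-1}\le p_k^m\le\cdots\le p_k^\infty
\]
for each fixed transient level $k$. The map $q\mapsto 1-(1-q)^M$ is non-decreasing on $[0,1]$, and $p\mapsto 1/p$ is non-increasing on $(0,1]$. All quantities are positive: each is at least $\alpha|\Akp|$, by Assumptions~\ref{ass:m-pos} and~\ref{ass:full-pos}. So the levelwise ordering of the $p$'s reduces to the corresponding ordering of the $q$'s.

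The finite-depth links come directly from earlier results. For $m\ge2$, $q_k^m\ge q_k^{m-1}$ is Theorem~\ref{thm:mono}. The first link $q_k^1\ge q_k^0=\alpha|\Akp|$ follows from the positivity remark after Definition~\ref{def:qk}: $G(Q_1)(\Akp\mid Q_1)\ge\alpha|\Akp|$, and we take the infimum over $Q_1$. Summing $1/p_k^{(\cdot)}$ over $k$ then yields every inequality in the displayed chain except the leftmost one involving full memory.

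The main obstacle is the full-memory link $q_k^\infty\ge q_k^m$ for every $m$. This requires relating $F$ to the windowed $G$, and the paper has not stated such a relation explicitly. The only stated link is Corollary~\ref{cor:chain}, which asserts $q_k^m\nearrow q_k^\infty$ and hence $q_k^m\le q_k^\infty$ for all $m$. I would invoke it, making explicit the implicit modelling convention: for a history $h$ of length $t+1$, the full-memory kernel $F(h)$ is the $(t+1)$-window kernel $G(h)$, and Assumption~\ref{ass:mono} holds at all depths.

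Under this convention the argument goes as follows. Fix any $h$ with $\ell(\mathrm{last}(h))=k$.
\begin{itemize}[nosep]
\item If $|h|\ge m$, then $m$-fold application of Assumption~\ref{ass:mono} (dropping oldest pools) gives $F(h)(\Akp\mid\mathrm{last}(h))\ge G(\text{last }m\text{ pools of }h)(\Akp\mid\mathrm{last}(h))\ge q_k^m$.
\item If $|h|<m$, we pad by interpreting the short history as a truncated window, as in Definition~\ref{def:m-kernel}. Alternatively we bound it below by $q_k^{|h|}$ and use the already-proved monotonicity only in the direction available.
\end{itemize}
I expect this short-history case to be the delicate point. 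If it does not close cleanly, I would fall back on citing Corollary~\ref{cor:chain} directly for $q_k^\infty\ge q_k^m$.

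Taking the infimum over $h$ gives $q_k^\infty\ge q_k^m$, hence $p_k^\infty\ge p_k^m$. Summing $1/p_k^{(\cdot)}$ over $k$ completes the leftmost inequality and with it the full chain.
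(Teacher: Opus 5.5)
Your treatment of the finite-depth links and of the baseline link matches the paper's argument: Theorem~\ref{thm:mono}, monotonicity of $q\mapsto 1/(1-(1-q)^M)$, and $q_k^0=\alpha|\Akp|\le q_k^1$ from positivity. The genuine gap is the leftmost link $q_k^\infty\ge q_k^m$. Your convention argument cannot be repaired; it actually refutes that link. The infimum in Definition~\ref{def:full-qk} runs over all $t\ge0$, including $t=0$. There $h=(Q)$, and your convention gives $F(h)=G(Q)$, so $q_k^\infty\le q_k^1$. In the other direction, $|h|-1$ applications of Assumption~\ref{ass:mono} give $F(h)(\Akp\mid\mathrm{last}(h))\ge G(\mathrm{last}(h))(\Akp\mid\mathrm{last}(h))\ge q_k^1$. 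Hence $q_k^\infty=q_k^1$ exactly. Under $F(h)=G(h)$ and the literal definitions, the leftmost inequality therefore says $\sum_k 1/p_k^1\le\sum_k 1/p_k^m$. That is false as soon as $q_k^m>q_k^1$ at some level. Neither short-history remedy helps, because Assumption~\ref{ass:mono} only produces lower bounds through \emph{shorter} windows, i.e.\ bounds by $q_k^{|h|}\le q_k^m$, which is the wrong direction. Also, in your first bullet the number of applications is $|h|-m$, not $m$.

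Your fallback to Corollary~\ref{cor:chain} is what the paper's proof effectively does: it simply points to Theorem~\ref{thm:full-htb} for the leftmost term. However, the limit claim $q_k^m\nearrow q_k^\infty$ in that corollary is not derived from the definitions; it is a modelling identification. The leftmost link holds only if $q_k^\infty$ is read as a saturated value. One option is $q_k^\infty:=\lim_{m\to\infty}q_k^m$, as in Definition~\ref{def:sat}. Another is $\lim_{T\to\infty}\inf_{t\ge T}$ of the inner infimum in Definition~\ref{def:full-qk}; under your convention and Assumption~\ref{ass:mono} this equals $\sup_m q_k^m$. State this as an explicit hypothesis instead of presenting the convention argument as a proof. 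Note also what it costs. With this reading, $\sum_k 1/p_k^\infty$ is no longer a hitting-time bound for the full-memory algorithm via Theorem~\ref{thm:full-htb}, because round $t$ only guarantees escape probability $q_k^{t+1}$. That transient has to be paid for separately, e.g.\ as in Theorem~\ref{thm:online}.
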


\begin{proof}
  Each inequality is Theorem~\ref{thm:mono} together with monotonicity of
  $1/p_k^m$ in $q_k^m$; the leftmost term is
  Theorem~\ref{thm:full-htb} and the rightmost is the
  uniform-exploration baseline $q_k^0=\alpha|\Akp|$.
\end{proof}

\begin{theorem}[Convergence of the $m\to\infty$ limit]\label{thm:limit}
  If $q_k^m\nearrow q_k^\infty$ for every $k$ (e.g.\ under
  \eqref{eq:sat}), then
  \[
    \lim_{m\to\infty}\sum_{k=k_0}^{K(\varepsilon)-1}\frac1{1-(1-q_k^m)^M}
    =\sum_{k=k_0}^{K(\varepsilon)-1}\frac1{1-(1-q_k^\infty)^M}.
  \]
\end{theorem}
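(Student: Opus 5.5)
The plan is to reduce the statement to continuity of a single real function, applied term by term to a finite sum. The summation range $k=k_0,\dots,K(\varepsilon)-1$ is finite and does not depend on $m$: $K(\varepsilon)$ is fixed by the landscape and the tolerance, not by the memory depth. So the limit of the sum equals the sum of the limits, provided each summand converges. It therefore suffices to show, for each fixed transient level $k$,
\[
  \lim_{m\to\infty}\frac1{1-(1-q_k^m)^M}=\frac1{1-(1-q_k^\infty)^M}.
\]

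For each fixed $k$, define $g(q)=1/\bigl(1-(1-q)^M\bigr)$ on $(0,1]$. The denominator is a polynomial in $q$, hence continuous. It is strictly positive for $q\in(0,1]$, because $0\le 1-q<1$ gives $(1-q)^M<1$. So $g$ is continuous on $(0,1]$. The argument $q_k^m$ stays in this domain: $q_k^m\ge q_k^0=\alpha|\Akp|>0$ by Corollary~\ref{cor:chain} and the uniform-positivity floor, and $q_k^m\le 1$ as a probability. The limit $q_k^\infty\ge q_k^m>0$ is also in the domain. The hypothesis $q_k^m\nearrow q_k^\infty$ then gives $g(q_k^m)\to g(q_k^\infty)$ by sequential continuity. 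Since $g$ is decreasing, the convergence is monotone from above, which matches the memory hierarchy of Theorem~\ref{thm:hierarchy}.

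The only point needing care is the positivity of the denominators, including at the limit, since a vanishing denominator would break continuity. This is why I would explicitly invoke the lower bound $q_k^0=\alpha|\Akp|>0$. Without it, a level with $q_k^m\to 0$ would give a divergent summand. Under the saturation model~\eqref{eq:sat} no extra work is needed. There $q_k^m=q_k^\infty-(q_k^\infty-q_k^0)e^{-c_km}$ is increasing in $m$ and converges to $q_k^\infty$ since $c_k>0$. Moreover, the explicit decay bound after Proposition~\ref{prop:sojourn-sat} shows the convergence is in fact exponentially fast.
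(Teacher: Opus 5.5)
Your proof is correct and is essentially the paper's argument: the summation range is finite and independent of $m$, and each summand $1/(1-(1-q)^M)$ is a continuous, decreasing function of $q$ on $(0,1]$, so the limit passes inside the sum. Your explicit check that the denominators stay bounded away from zero spells out what the paper's one-line proof leaves implicit. Note that $q_k^m\ge\alpha|\Akp|>0$ already follows from Assumption~\ref{ass:m-pos} alone, so this step does not need Corollary~\ref{cor:chain} or its monotone-learning hypothesis.
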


\begin{proof}
  Each summand is positive, non-increasing in $m$, and continuous in
  $q_k^m$; as the sum is finite ($K-k_0$ terms), the limit passes inside.
\end{proof}

\begin{definition}[Memory-adjusted difficulty]
  $D^m=\sum_{k=k_0}^{K(\varepsilon)-1}1/q_k^m$.  By Theorem~\ref{thm:mono},
  $D^m\le D^{m-1}$: memory reduces effective landscape difficulty.
\end{definition}

\begin{corollary}[Bottleneck relief]\label{cor:bottleneck}
  Under \eqref{eq:sat}, at a bottleneck level $k^*$
  (Definition~\ref{def:difficulty}),
  \[
    \frac1{q_{k^*}^m}
    \ \le\
    \frac{e^{c_{k^*}m}}
         {q_{k^*}^\infty(e^{c_{k^*}m}-1)+q_{k^*}^0}
    \ \xrightarrow[m\to\infty]{}\ \frac1{q_{k^*}^\infty}.
  \]
  Thus a single learnable bottleneck, which dominates the no-memory
  hitting time, is relieved at geometric rate $c_{k^*}$ in $m$.
\end{corollary}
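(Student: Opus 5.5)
The bound in Corollary~\ref{cor:bottleneck} should follow directly from the saturation law \eqref{eq:sat}, by an algebraic rearrangement; I expect it to hold with equality. The plan is to write $q_{k^*}^m$ explicitly, invert it, and clear the exponential from the denominator. After that, the limit follows by continuity. Throughout I abbreviate $c=c_{k^*}$, $q^0=q_{k^*}^0$ and $q^\infty=q_{k^*}^\infty$.

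\emph{Step 1 (positivity).} First check that the expression to be inverted is strictly positive. By Definition~\ref{def:sat}, $q^m=q^\infty-(q^\infty-q^0)e^{-cm}$ is a convex combination $(1-e^{-cm})q^\infty+e^{-cm}q^0$ of $q^\infty$ and $q^0$. Since $q^\infty\ge q^0>0$, this gives $q^m\ge q^0>0$, so $1/q^m$ is well defined. This is consistent with Corollary~\ref{cor:chain}.

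\emph{Step 2 (rearrangement).} Multiply numerator and denominator of $1/q^m$ by $e^{cm}>0$:
\[
  \frac1{q^m}
  =\frac{e^{cm}}{q^\infty e^{cm}-(q^\infty-q^0)}
  =\frac{e^{cm}}{q^\infty(e^{cm}-1)+q^0}.
\]
This establishes the displayed inequality, in fact as an identity. The denominator is positive by Step~1, and also directly, since $e^{cm}-1\ge0$ and $q^0>0$.

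\emph{Step 3 (limit and rate).} Divide numerator and denominator by $e^{cm}$ to get
\[
  \frac1{q^m}=\frac{1}{q^\infty+(q^0-q^\infty)e^{-cm}}.
\]
As $m\to\infty$, the term $e^{-cm}\to0$ because $c>0$, so by continuity $1/q^m\to1/q^\infty$. For the ``geometric rate'' claim, compute the gap:
\[
  \frac1{q^m}-\frac1{q^\infty}
  =\frac{(q^\infty-q^0)e^{-cm}}{q^\infty q^m}
  \le\frac{q^\infty-q^0}{q^\infty q^0}\,e^{-cm},
\]
using $q^m\ge q^0$ from Step~1.

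\emph{Interpretation.} Since $k^*$ is a bottleneck, $1/q_{k^*}^m$ dominates $D^m$, so $D^m$ is relieved at the same geometric rate.

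There is no genuine obstacle in this argument. The only points needing care are the positivity of the denominator (Step~1) and reading the ``rate $c_{k^*}$'' statement as the exponential bound on the gap derived in Step~3.
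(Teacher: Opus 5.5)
Your proof is correct and follows the paper's own argument: rearrange \eqref{eq:sat} to $q_{k^*}^m=\bigl(q_{k^*}^\infty(e^{c_{k^*}m}-1)+q_{k^*}^0\bigr)/e^{c_{k^*}m}$ and invert, so the displayed bound in fact holds with equality. Your positivity check and the explicit gap estimate $1/q_{k^*}^m-1/q_{k^*}^\infty\le\frac{q_{k^*}^\infty-q_{k^*}^0}{q_{k^*}^\infty q_{k^*}^0}\,e^{-c_{k^*}m}$ are useful additions: they make precise the ``geometric rate $c_{k^*}$'' claim, which the paper leaves implicit.
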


\begin{proof}
  Rearranging \eqref{eq:sat},
  $q_{k^*}^m=\dfrac{q_{k^*}^\infty(e^{c_{k^*}m}-1)+q_{k^*}^0}{e^{c_{k^*}m}}$;
  invert.
\end{proof}

\subsection{Memory depth: a bias--variance trade-off}
\label{sec:num-memory}

Figure~\ref{fig:convergence} already shows a striking feature: \emph{more}
memory converges \emph{slower} here.  This is not a contradiction of the
memory hierarchy (Theorem~\ref{thm:hierarchy}); it is a failure of its
hypothesis.  The hierarchy assumes \emph{monotone learning}
(Assumption~\ref{ass:mono})---that conditioning on an additional past pool never
lowers the escape probability.  Under elitism the elite fitness is
non-decreasing, so on \textsc{OneMax} older pools have systematically
\emph{lower} marginals; averaging them into $\widehat\theta_t$ biases the
proposal distribution toward worse strings, and the plain windowed mean
\emph{violates} Assumption~\ref{ass:mono}.  The simulation thus delineates
exactly when memory helps.

To expose the trade-off we make a single pool an unreliable summary of
the landscape by (i) shrinking the pool to $N=6$, $M=12$ and (ii)
corrupting the oracle with evaluation noise---selection ranks by
$\varphi(x)+\mathcal{N}(0,\sigma^2)$, $\sigma=4$, a stand-in for a noisy
scoring function such as docking.  Now a single noisy pool gives a
high-variance estimate of $\theta$, and aggregating a few recent pools
\emph{reduces variance} faster than it adds staleness bias.
Figure~\ref{fig:optdepth} shows the resulting U-shaped curve: the expected
hitting time falls from $61.2$ rounds at $m=1$ to a minimum of $41.7$
rounds at $m^\star=5$, then rises again as stale bias dominates
($195.8$ rounds at full memory, off scale).  This is precisely the
diminishing-returns and optimal-depth structure anticipated by
Proposition~\ref{prop:sojourn-sat} and the optimal-depth question: a finite
window captures the variance-reduction benefit of memory while limiting
the bias from outdated pools.  The control strategy of
Section~\ref{sec:control} returns to this same noisy oracle and shows how
confidence-bounded, replicated decisions still recover a true
$\varepsilon$-molecule.

\begin{figure}[ht]
\centering
\begin{tikzpicture}
\begin{axis}[
    width=0.7\textwidth, height=0.44\textwidth,
    xlabel={memory depth $m$}, ylabel={mean hitting time $\E[\tau_\varepsilon]$},
    xmin=0.5, xmax=10.5, ymin=35, ymax=66,
    xtick={1,2,3,4,5,6,8,10},
    grid=both, major grid style={black!12},
    tick align=outside, thick,
]
\addplot[violet, mark=*, mark size=1.5pt,
    error bars/.cd, y dir=both, y explicit]
  coordinates {
    (1,61.16) +- (0,2.77) (2,49.83) +- (0,2.06) (3,43.12) +- (0,1.57)
    (4,42.92) +- (0,1.49) (5,41.74) +- (0,1.41) (6,43.67) +- (0,1.38)
    (8,45.22) +- (0,1.24) (10,48.35) +- (0,1.29)};
\addplot[only marks, mark=o, mark size=4pt, red, thick, forget plot]
  coordinates {(5,41.74)};
\node[anchor=south, red, font=\footnotesize] at (axis cs:5,41.74) {$m^\star=5$};
\end{axis}
\end{tikzpicture}
\caption{Optimal memory depth under a noisy oracle
  ($n=20$, $N=6$, $M=12$, $\sigma=4$; mean of
  $200$ runs, error bars $\pm1$ s.e.m.).  When a single pool is an
  unreliable summary, moderate memory reduces estimation variance and
  speeds convergence; excessive memory reintroduces stale bias (full
  memory: $195.8$ rounds, off scale).  The minimum at $m^\star=5$
  illustrates the diminishing-returns structure of
  Proposition~\ref{prop:sojourn-sat} and the optimal-depth question.  Because the
  plain windowed mean violates monotone learning (Assumption~\ref{ass:mono}) on
  \textsc{OneMax}, the curve is U-shaped rather than monotone.}
\label{fig:optdepth}
\end{figure}

\paragraph{The monotone hierarchy (numerical).}

When the learner \emph{does} satisfy Assumption~\ref{ass:mono}---so that
additional memory cannot lower an escape probability---Theorem~\ref{thm:mono}
and Theorem~\ref{thm:hierarchy} predict a strictly monotone benefit of
depth.  Figure~\ref{fig:hierarchy} plots the expected per-level sojourn
$1/p_k^m$ under the exponential saturation model of
Definition~\ref{def:sat}, with the parameters of the worked Example in
Section~\ref{sec:example} ($M=100$, $q_k^0=10^{-3}$, $q_k^\infty=10^{-2}$,
learning rate $c_k=0.5$).  The sojourn falls monotonically from $10.5$
rounds at $m=0$ to the saturated value $1/p_k^\infty\approx1.58$ rounds,
with exponentially diminishing marginal returns---the curve is essentially
flat beyond $m\approx6$, quantifying how a modest window already realises
almost the entire full-memory speed-up of Theorem~\ref{thm:limit}.

\begin{figure}[ht]
\centering
\begin{tikzpicture}
\begin{axis}[
    width=0.7\textwidth, height=0.44\textwidth,
    xlabel={memory depth $m$},
    ylabel={expected sojourn $1/p_k^m$ (rounds)},
    xmin=0, xmax=12, ymin=0, ymax=11,
    grid=both, major grid style={black!12},
    tick align=outside, thick,
]
\addplot[blue, mark=*, mark size=1.3pt] coordinates {
(0,10.503) (1,2.735) (2,2.045) (3,1.812) (4,1.706) (5,1.651) (6,1.621)
(7,1.603) (8,1.593) (9,1.587) (10,1.583) (11,1.581) (12,1.579)};
\addplot[black, dashed, thin, forget plot] coordinates {(0,1.577) (12,1.577)};
\node[anchor=south west, font=\footnotesize] at (axis cs:6.2,1.6)
  {saturated $1/p_k^\infty\approx1.58$};
\end{axis}
\end{tikzpicture}
\caption{Monotone memory hierarchy under monotone learning
  (Assumption~\ref{ass:mono}), saturation model \eqref{eq:sat} with $M=100$,
  $q_k^0=10^{-3}$, $q_k^\infty=10^{-2}$, $c_k=0.5$.  The expected sojourn
  $1/p_k^m$ decreases monotonically in $m$ toward the full-memory value
  (Theorems~\ref{thm:mono}, \ref{thm:hierarchy}, and~\ref{thm:limit}), with exponentially
  diminishing returns (Proposition~\ref{prop:sojourn-sat}): a window of
  $m\approx6$ already captures nearly all of the achievable
  acceleration.}
\label{fig:hierarchy}
\end{figure}

\section{Exit Times Without the Monotone-Learning Condition}
\label{sec:nonmonotone}

The memory hierarchy of Section~\ref{sec:comparison} rests on the
monotone-learning condition (Assumption~\ref{ass:mono}).  That condition, however,
was used \emph{only} to order the hitting-time bounds across memory
depths: the layered exit-time bound itself (Theorem~\ref{thm:m-htb})
requires just elitism and uniform positivity.  In practice monotone
learning can fail---under elitism the elite fitness is non-decreasing, so
older pools in a sliding window are systematically \emph{staler} (drawn
from lower levels), and a learner that averages them can be biased toward
worse proposals.  This is exactly the bias--variance trade-off observed
numerically in Section~\ref{sec:num-setup} (Figure~\ref{fig:optdepth}),
where the expected exit time is \emph{U-shaped} in the memory depth $m$
and is minimised at a finite $m^\star$.

This section builds a quantitative exit-time theory that drops
Assumption~\ref{ass:mono} entirely.  The idea is to resolve the learning
dynamics \emph{within} a fitness level: the escape probability during a
sojourn at level $k$ is governed not by a single worst-case constant but
by how many genuinely informative (current-level) pools the window
currently holds.  This yields an exit-time bound that (a) holds with no
monotonicity assumption, (b) is never weaker---and is generically
tighter---than Theorem~\ref{thm:m-htb}, and (c) pinpoints the optimal
memory depth as the first peak of a one-dimensional ``learning profile.''
Throughout we assume only Assumption~\ref{ass:elitism} (elitism) and
Assumption~\ref{ass:m-pos} ($m$-step uniform positivity); the window
process is the time-homogeneous Markov chain of Theorem~\ref{thm:m-markov}.

\subsection{The level-\texorpdfstring{$k$}{k} learning profile and the non-monotone bound}
\label{sec:nm-profile}

During a sojourn at level $k$, elitism forces a rigid window structure.
If the chain enters level $k$ at time $\tau_k$ and has spent $s$ rounds
there, then $P_{\tau_k},\dots,P_{\tau_k+s}$ are all at level $k$
while every earlier pool is at level $\le k$; hence the window
$S^m_{\tau_k+s}=(P_{\tau_k+s-m+1},\dots,P_{\tau_k+s})$ has its
$\min(s+1,m)$ most recent entries at level $k$ and the remainder at
levels $\le k$.  We index escape probabilities by this count.

\begin{definition}[Level-$k$ learning profile]\label{def:profile}
  For $1\le j\le m$ let
  \[
    \mathcal W_k(j)=\bigl\{\,S=(Q_1,\dots,Q_m)\in\Omn^m :
      \ell(Q_i)=k\ (m-j<i\le m),\quad \ell(Q_i)\le k\ (i\le m-j)\,\bigr\}
  \]
  be the windows whose $j$ most recent pools are at level $k$ and whose
  remaining $m-j$ pools are stale (at levels $\le k$).  Define the
  \emph{level-$k$ learning profile} and its $M$-fold lift
  \[
    \rho_k(j)=\inf_{S\in\mathcal W_k(j)} G(S)\bigl(\Akp\mid Q_m\bigr),
    \qquad
    r_k(j)=1-\bigl(1-\rho_k(j)\bigr)^M ,\qquad j=1,\dots,m .
  \]
\end{definition}

The sets $\mathcal W_k(j)$ are precisely the window compositions that
\emph{occur} during a level-$k$ sojourn; the profile thus measures the
escape probability as a function of the amount of current-level data the
learner has accumulated.  Under Assumption~\ref{ass:m-pos},
$\rho_k(j)\ge\alpha\,|\Akp|>0$ for every $j$, so each $r_k(j)\in(0,1]$.

\begin{remark}[Relation to the worst-case escape probability]
\label{rem:profile-vs-qkm}
  The escape probability $q_k^m$ of Definition~\ref{def:m-qk} is the
  infimum over \emph{all} windows with $\ell(Q_m)=k$, which is a superset
  of $\bigcup_j\mathcal W_k(j)$ and additionally contains windows with an
  entry \emph{above} level $k$---configurations elitism renders
  unreachable during a level-$k$ sojourn.  Consequently
  $q_k^m\le\min_{1\le j\le m}\rho_k(j)$, i.e.\ the profile is the
  reachable, level-resolved refinement of $q_k^m$.
\end{remark}

\paragraph{The non-monotone exit-time bound.}

\begin{lemma}[Inhomogeneous sojourn domination]\label{lem:nm-sojourn}
  Define $\Pi_k(s)=\prod_{j=1}^{s}\bigl(1-r_k(j)\bigr)$ with
  $\Pi_k(0)=1$, and
  \begin{equation}
    T_k(m)=\sum_{s\ge0}\ \prod_{i=0}^{s-1}\Bigl(1-r_k\bigl(\min(i+1,m)\bigr)\Bigr)
          =\sum_{s=0}^{m-2}\Pi_k(s)\;+\;\frac{\Pi_k(m-1)}{r_k(m)} .
    \label{eq:Tkm}
  \end{equation}
  Then under Assumptions~\ref{ass:elitism} and~\ref{ass:m-pos} the
  expected sojourn at level $k$ satisfies $\E[\tau_{k+1}-\tau_k]\le
  T_k(m)$.
\end{lemma}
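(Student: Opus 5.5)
The plan is to run the argument of Lemma~\ref{lem:sojourn}, but with a round-dependent escape bound in place of the constant $p_k$. Write $N_k=\tau_{k+1}-\tau_k$. On $\{\tau_k<\infty\}$, the times $\tau_k+i$ with $0\le i<N_k$ all have $\ell(P_{\tau_k+i})=k$. The first step is to identify the window at such a time. By Lemma~\ref{lem:monotone} every earlier pool has level $\le k$. The $\min(i+1,m)$ most recent pools $P_{\tau_k+i-\min(i+1,m)+1},\dots,P_{\tau_k+i}$ are at level exactly $k$. Hence $S^m_{\tau_k+i}\in\mathcal W_k(\min(i+1,m))$. For $t<m-1$ the window is padded according to Definition~\ref{def:m-kernel}; I would treat those early rounds with the convention that the truncated window belongs to the same class, or simply start at $t\ge m-1$.

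The second step is the conditional escape bound. Given $\calF_{\tau_k+i}$, the $M$ proposals are i.i.d.\ from $G(S^m_{\tau_k+i})(\cdot\mid P_{\tau_k+i})$. Each proposal lies in $\Akp$ with probability at least $\rho_k(\min(i+1,m))$, by the definition of the profile as an infimum over $\mathcal W_k(j)$. So at least one escapes with probability $\ge r_k(\min(i+1,m))$, and by elitism such an escape forces $\ell(P_{\tau_k+i+1})\ge k+1$. Therefore, on $\{N_k\ge i+1\}$,
\[
\Prob\bigl(N_k\ge i+2\mid\calF_{\tau_k+i}\bigr)\le 1-r_k\bigl(\min(i+1,m)\bigr).
\]
Here $\{N_k\ge i+1\}$ is $\calF_{\tau_k+i}$-measurable, since it says no escape has occurred by time $\tau_k+i$. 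To make this rigorous I would apply the strong Markov structure at the stopping time $\tau_k$, or condition directly on the stopped $\sigma$-algebras as in Lemma~\ref{lem:sojourn}.

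The third step iterates this bound. Taking expectations and inducting gives
\[
\Prob(N_k\ge s+1)\le\prod_{i=0}^{s-1}\bigl(1-r_k(\min(i+1,m))\bigr),
\]
using $\Prob(N_k\ge1)\le1$. Summing over $s\ge0$ via $\E[N_k]=\sum_{s\ge0}\Prob(N_k\ge s+1)$ gives $\E[N_k]\le T_k(m)$ in its first form. For the closed form, for $s\le m-1$ the product equals $\Pi_k(s)$. For $s\ge m-1$ it equals $\Pi_k(m-1)(1-r_k(m))^{s-m+1}$. The tail is a geometric series whose sum is $\Pi_k(m-1)/r_k(m)$, and it is finite because $r_k(m)>0$ under Assumption~\ref{ass:m-pos}. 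Adding the first $m-1$ terms $\sum_{s=0}^{m-2}\Pi_k(s)$ gives the second expression in \eqref{eq:Tkm}.

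The main obstacle is the window-membership step. It needs care at the boundary: when the sojourn is short relative to $m$, and at early times where the window is not yet full, we must verify that the stale entries are at level $\le k$ rather than $<k$. Being at level $\le k$ is exactly what membership in $\mathcal W_k(j)$ requires, so no monotone-learning assumption enters. If $\tau_k$ falls before time $m-1$, I would handle it by noting that the padded window still has its last $\min(i+1,m)$ entries at level $k$ and the rest at level $\le k$.
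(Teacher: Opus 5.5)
Your proposal is correct and follows essentially the same route as the paper's proof. You place the window at the $s$-th sojourn round in $\mathcal W_k(\min(s+1,m))$, turn the profile infimum into a conditional per-round escape bound $r_k(\min(s+1,m))$, iterate the conditioning argument of Lemma~\ref{lem:sojourn} to get the inhomogeneous survival product, and sum the geometric tail for the closed form.

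Your handling of the not-yet-full window at $t<m-1$ goes beyond the paper, which is silent on it. Of your two options, padding with earlier pools such as $P_0$ is the one that works, because those entries have level $\le k$; merely starting the analysis at $t\ge m-1$ would not cover sojourns that begin earlier.
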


\begin{proof}
  By the window structure above, at the $s$-th round of the sojourn
  $S^m_{\tau_k+s}\in\mathcal W_k(\min(s+1,m))$, so the single-proposal
  escape probability is at least $\rho_k(\min(s+1,m))$ and, with $M$
  i.i.d.\ proposals, the round escape probability conditional on the
  past is at least $r_k(\min(s+1,m))$; a single escaping proposal raises
  the level by elitism.  Writing $N_k=\tau_{k+1}-\tau_k$, the same
  conditioning argument as in Lemma~\ref{lem:sojourn} gives
  $\Prob(N_k>s)\le\prod_{i=0}^{s-1}\bigl(1-r_k(\min(i+1,m))\bigr)$, whence
  $\E[N_k]=\sum_{s\ge0}\Prob(N_k>s)\le T_k(m)$.  For the closed form in
  \eqref{eq:Tkm}: for $s\le m-1$ the survival product equals
  $\Pi_k(s)$, while for $s\ge m-1$ it equals
  $\Pi_k(m-1)\,(1-r_k(m))^{\,s-(m-1)}$; summing the geometric tail yields
  $\Pi_k(m-1)/r_k(m)$.
\end{proof}

\begin{theorem}[Non-monotone layered exit time]\label{thm:nm-exit}
  Under Assumptions~\ref{ass:elitism} and~\ref{ass:m-pos}---and
  \emph{without} Assumption~\ref{ass:mono}---the $m$-step algorithm
  satisfies, for any $P_0$ with $\ell(P_0)=k_0$,
  \begin{equation}
    \E[\tau_\varepsilon\mid P_0]\ \le\ \sum_{k=k_0}^{K(\varepsilon)-1} T_k(m).
    \label{eq:nm-exit}
  \end{equation}
  Moreover $T_k(m)\le 1/p_k^m$ for every $k$, so \eqref{eq:nm-exit} is
  never weaker than the layered bound of Theorem~\ref{thm:m-htb}, and is
  strictly tighter whenever the profile $\rho_k(\cdot)$ is non-constant.
\end{theorem}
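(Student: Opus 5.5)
The plan has three parts. First obtain the layered bound \eqref{eq:nm-exit} by summing the per-level bounds of Lemma~\ref{lem:nm-sojourn}. Then compare each $T_k(m)$ with $1/p_k^m$ pointwise through the survival products. Finally extract strictness from a non-constant profile.

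\emph{Step 1 (the layered bound).} By Lemma~\ref{lem:monotone}, $\ell(P_t)$ is non-decreasing and $\Oms$ is absorbing. Hence $\tau_\varepsilon=\tau_K=\sum_{k=k_0}^{K-1}(\tau_{k+1}-\tau_k)$, and the sum is a.s.\ finite by Theorem~\ref{thm:m-htb}; levels that are jumped over contribute zero sojourn. Taking expectations and applying Lemma~\ref{lem:nm-sojourn} to each summand gives \eqref{eq:nm-exit}. One point needs care: the initial window before $t=m-1$, and the window at entry time $\tau_k$, may contain pools at levels below $k$. These windows are still covered by $\mathcal W_k(j)$, because its definition allows stale entries at levels $\le k$. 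The start-up phase with shorter windows is handled by the convention $G_t=G(P_{\max(0,t-m+1)},\dots,P_t)$, read as padding with copies of $P_0$. I would state this convention explicitly, or else note that the lemma is applied verbatim.

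\emph{Step 2 ($T_k(m)\le 1/p_k^m$).} By Remark~\ref{rem:profile-vs-qkm}, $q_k^m\le\rho_k(j)$ for every $j\le m$. The map $q\mapsto1-(1-q)^M$ is increasing, so $p_k^m\le r_k(j)$ for all $j$. The $s$-th survival product in \eqref{eq:Tkm} then satisfies
\[
  \prod_{i=0}^{s-1}\bigl(1-r_k(\min(i+1,m))\bigr)\le(1-p_k^m)^s .
\]
Summing over $s\ge0$ yields $T_k(m)\le\sum_s(1-p_k^m)^s=1/p_k^m$. Comparing term by term with Theorem~\ref{thm:m-htb} then shows that \eqref{eq:nm-exit} is never weaker.

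\emph{Step 3 (strictness), the main obstacle.} If $\rho_k$ is non-constant on $\{1,\dots,m\}$, some $j_0$ has $\rho_k(j_0)>\min_j\rho_k(j)\ge q_k^m$, so $r_k(j_0)>p_k^m$. This requires $M\ge1$ and $\rho_k(j_0)\le1$, which gives strict monotonicity of the lift.

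Every survival product with $s\ge j_0$ contains the factor $1-r_k(j_0)$. The products with $s\ge m$ also contain $1-r_k(m)$, and that covers the case $j_0=m$. So each such product is strictly below $(1-p_k^m)^s$, provided $1-p_k^m>0$. If $p_k^m=1$, then every $r_k(j)=1$ and the profile is constant in the lifted sense, so that case is excluded or degenerate and should be noted.

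Summing these strict inequalities over infinitely many $s$, with all terms nonnegative, gives $T_k(m)<1/p_k^m$. The subtlety is exactly this reliance on $q_k^m\le\min_j\rho_k(j)$, together with the edge cases $r_k=1$. I expect to state strictness under the proviso $p_k^m<1$.
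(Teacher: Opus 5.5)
Your proof is correct, and Steps 1 and 2 coincide with the paper's argument: you write $\tau_\varepsilon$ as the sum of level sojourns, apply Lemma~\ref{lem:nm-sojourn} to each sojourn, and bound every survival factor by $1-p_k^m$ via Remark~\ref{rem:profile-vs-qkm}. Your Step 3 supplies the strictness clause, which the paper states without proof; your proviso $p_k^m<1$ is automatic, since non-constancy of the profile already gives $p_k^m<r_k(j_0)\le 1$.
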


\begin{proof}
  By Lemma~\ref{lem:monotone} the levels are non-decreasing, so
  $\tau_\varepsilon=\tau_K=\sum_{k=k_0}^{K(\varepsilon)-1}(\tau_{k+1}-\tau_k)$
  ($P_0$-a.s.\ finite, since Assumption~\ref{ass:m-pos} gives the geometric
  tail exactly as in Theorem~\ref{thm:m-htb}).  Taking expectations and
  applying Lemma~\ref{lem:nm-sojourn} term by term gives \eqref{eq:nm-exit}.
  For the comparison, Remark~\ref{rem:profile-vs-qkm} gives $r_k(j)\ge
  p_k^m$ for all $j$, so every survival factor in \eqref{eq:Tkm} is at
  most $1-p_k^m$, whence $T_k(m)\le\sum_{s\ge0}(1-p_k^m)^s=1/p_k^m$.
\end{proof}

\begin{proposition}[Exactness under composition-homogeneity]
\label{prop:nm-exact}
  If the learner is \emph{composition-homogeneous}---the escape
  probability equals $\rho_k(j)$ for every window in $\mathcal W_k(j)$,
  not merely as an infimum---and levels are entered without skipping,
  then $\E[\tau_{k+1}-\tau_k]=T_k(m)$ and
  $\E[\tau_\varepsilon\mid P_0]=\sum_{k=k_0}^{K(\varepsilon)-1}T_k(m)$.
\end{proposition}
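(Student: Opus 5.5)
The plan is to redo the proof of Lemma~\ref{lem:nm-sojourn}, but with equalities in place of inequalities, and then sum over levels as in Theorem~\ref{thm:nm-exit}. Write $N_k=\tau_{k+1}-\tau_k$.

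First I fix the state of the window during a level-$k$ sojourn. At the $s$-th round of the sojourn the window lies in $\mathcal W_k(\min(s+1,m))$; this is the elitism argument preceding Definition~\ref{def:profile}. Composition-homogeneity then says that the single-proposal escape probability is \emph{exactly} $\rho_k(\min(s+1,m))$, whatever the particular window in that class. Since the $M$ proposals are i.i.d.\ given $\calF_{\tau_k+s}$ (Theorem~\ref{thm:m-markov}), the event that no proposal lands in $\Akp$ has conditional probability exactly $(1-\rho_k(\min(s+1,m)))^M = 1-r_k(\min(s+1,m))$.

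Next I have to identify this event with the event that the chain stays at level $k$. The top-$N$ rule gives one direction: a pool at level $k$ stays at level $k$ precisely when no proposal lies in $\Akp$, because proposals at or below $v_k$ cannot raise $f$, and any proposal in $\Akp$ does raise it. Hence $\Prob(N_k>s+1\mid\calF_{\tau_k+s}) = 1-r_k(\min(s+1,m))$ holds exactly on $\{N_k>s\}$. Iterating with the tower property, as in Lemma~\ref{lem:sojourn}, gives $\Prob(N_k>s)=\prod_{i=0}^{s-1}(1-r_k(\min(i+1,m)))$ with equality. Summing over $s$ yields $\E[N_k]=T_k(m)$, and the closed form in \eqref{eq:Tkm} follows exactly as in the lemma.

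The main obstacle is the starting point of each sojourn and the role of the no-skipping hypothesis. The count $j=\min(s+1,m)$ requires that $P_{\tau_k}$ is the first level-$k$ pool and that the pools before it lie at levels strictly below $k$. That holds by the definition of $\tau_k$. The difficulty is that if a level is skipped, then $\tau_{k+1}=\tau_{k+2}$, the level-$(k+1)$ sojourn is empty, and its contribution $0$ differs from $T_{k+1}(m)>0$. No-skipping guarantees that every level $k_0\le k<K$ is actually visited, so $\tau_k<\tau_{k+1}$ and each sojourn begins at a genuine level-$k$ entry. It also guarantees that an escape from level $k$ lands exactly at level $k+1$, which keeps the window composition of the next sojourn in the classes $\mathcal W_{k+1}(j)$. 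I would state this explicitly, interpreting the hypothesis as ``almost surely every escape moves up exactly one level.'' Under that reading, each transient level contributes exactly $T_k(m)$. Finiteness of $\tau_\varepsilon$ comes from the geometric tail of Theorem~\ref{thm:m-htb}. Linearity of expectation over the decomposition $\tau_\varepsilon=\sum_{k=k_0}^{K(\varepsilon)-1}(\tau_{k+1}-\tau_k)$ then turns the bound of Theorem~\ref{thm:nm-exit} into the equality $\E[\tau_\varepsilon\mid P_0]=\sum_{k=k_0}^{K(\varepsilon)-1}T_k(m)$.

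A minor point to check is the initial window when $t<m-1$, where $G_t$ uses fewer than $m$ pools. I would handle this by assuming the chain is started with an $m$-window, or by noting that the convention of Definition~\ref{def:m-kernel} still places the truncated window in the appropriate class $\mathcal W_k(j)$ under the homogeneity hypothesis.
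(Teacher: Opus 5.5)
Your proposal is correct and follows the paper's own argument, which you spell out in more detail. Under composition-homogeneity the inequalities in the proof of Lemma~\ref{lem:nm-sojourn} become equalities, so $N_k$ has survival function exactly $\prod_{i<s}\bigl(1-r_k(\min(i+1,m))\bigr)$ and mean $T_k(m)$; no-skipping makes every transient sojourn nonempty, so these means sum to $\E[\tau_\varepsilon\mid P_0]$. Your identification of ``staying at level $k$'' with ``no proposal in $\Akp$'' and your handling of the truncated initial window supply details the paper leaves implicit.

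One caveat concerns the hypothesis rather than your proof. Under the standing Assumption~\ref{ass:m-pos}, from any level $k\le K(\varepsilon)-2$ some proposal lands in $A_{k+1}^{+}$ with positive probability. So the almost-sure no-skipping reading that you and the paper adopt can only hold when $K(\varepsilon)-k_0\le1$. Without it, your argument gives $\E[\tau_\varepsilon\mid P_0]=\sum_{k=k_0}^{K(\varepsilon)-1}\Prob\bigl(\ell(P_{\tau_k})=k\bigr)\,T_k(m)$.
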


\begin{proof}
  Under homogeneity the inequalities in Lemma~\ref{lem:nm-sojourn} become
  equalities, so $N_k$ is the inhomogeneous geometric variable with
  survival $\prod_{i<s}(1-r_k(\min(i+1,m)))$ and mean exactly $T_k(m)$;
  with no skipping every level is visited and the sojourns sum to
  $\tau_\varepsilon$.
\end{proof}

\subsection{The optimal memory depth}
\label{sec:nm-optdepth}

We now compare the depth-$m$ and depth-$(m{+}1)$ algorithms.  Both use
the \emph{same} learner---hence the same profile values
$\rho_k(1),\rho_k(2),\dots$---and differ only in where the window
saturates.

\begin{theorem}[Marginal value of memory; optimal depth]\label{thm:nm-margin}
  For every level $k$ and every $m\ge1$,
  \begin{equation}
    T_k(m+1)-T_k(m)
    =\Pi_k(m-1)\left(1-\frac{1}{r_k(m)}+\frac{1-r_k(m)}{r_k(m+1)}\right),
    \label{eq:nm-margin}
  \end{equation}
  and, since $\Pi_k(m-1)>0$,
  \begin{equation}
    \operatorname{sign}\!\bigl(T_k(m+1)-T_k(m)\bigr)
    =-\operatorname{sign}\!\bigl(\rho_k(m+1)-\rho_k(m)\bigr).
    \label{eq:nm-signlaw}
  \end{equation}
  Thus $T_k(\cdot)$ strictly decreases while the profile rises and
  strictly increases once it falls.  The exit-time-minimising level-$k$
  depth is the \emph{first peak of the learning profile},
  \begin{equation}
    m_k^\star=\min\{\,m\ge1 : \rho_k(m+1)\le\rho_k(m)\,\},
    \label{eq:nm-mstar}
  \end{equation}
  and if $\rho_k(\cdot)$ is unimodal then $m_k^\star$ is its global
  minimiser.
\end{theorem}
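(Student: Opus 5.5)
The plan is a direct computation from the closed form \eqref{eq:Tkm} of Lemma~\ref{lem:nm-sojourn}, followed by a factorisation that exposes the sign. Both algorithms use the same profile values $\rho_k(1),\rho_k(2),\dots$, so they share the same $r_k(j)$ and the same partial products $\Pi_k(s)$.

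\textbf{Step 1: the difference formula.} Write
\[
T_k(m)=\sum_{s=0}^{m-2}\Pi_k(s)+\frac{\Pi_k(m-1)}{r_k(m)},\qquad
T_k(m+1)=\sum_{s=0}^{m-1}\Pi_k(s)+\frac{\Pi_k(m)}{r_k(m+1)} .
\]
Subtracting, the common partial sums cancel and leave
\[
\Pi_k(m-1)+\frac{\Pi_k(m)}{r_k(m+1)}-\frac{\Pi_k(m-1)}{r_k(m)} .
\]
Substituting $\Pi_k(m)=\Pi_k(m-1)\bigl(1-r_k(m)\bigr)$ and factoring out $\Pi_k(m-1)$ gives \eqref{eq:nm-margin}.

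\textbf{Step 2: the sign law.} Rewrite $1-1/r_k(m)=-(1-r_k(m))/r_k(m)$, so the bracket in \eqref{eq:nm-margin} becomes
\[
\bigl(1-r_k(m)\bigr)\Bigl(\frac1{r_k(m+1)}-\frac1{r_k(m)}\Bigr).
\]
The map $\rho\mapsto 1-(1-\rho)^M$ is strictly increasing on $[0,1]$. Hence $r_k(m+1)\gtrless r_k(m)$ exactly when $\rho_k(m+1)\gtrless\rho_k(m)$, and the second factor has sign $-\operatorname{sign}(\rho_k(m+1)-\rho_k(m))$.

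For \eqref{eq:nm-signlaw} the prefactors must be strictly positive. Under Assumption~\ref{ass:m-pos} we have $\rho_k(j)\ge\alpha|\Akp|>0$, so each $r_k(j)>0$. Positivity of $\Pi_k(m-1)$ and of $1-r_k(m)$ additionally needs $r_k(j)<1$, i.e.\ $\rho_k(j)<1$. This is the one genuine obstacle. If some $\rho_k(j)=1$, then the escape is certain and the difference is $0$, so the strict sign law degenerates.

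I would handle this by noting that $\rho_k(j)=1$ forces $G(S)(\Akp\mid Q_m)=1$ on all of $\mathcal W_k(j)$. I would then either exclude that trivial case explicitly or read \eqref{eq:nm-signlaw} with $\operatorname{sign}(0)=0$ there. The "since $\Pi_k(m-1)>0$" in the statement implicitly assumes $\rho_k<1$, and I would state that assumption.

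\textbf{Step 3: the optimal depth.} By Step~2, for every $m<m_k^\star$ the profile strictly rises, so $T_k(m+1)<T_k(m)$. Hence $T_k$ strictly decreases up to $m_k^\star$. At $m_k^\star$ we have $\rho_k(m_k^\star+1)\le\rho_k(m_k^\star)$, so $T_k(m_k^\star+1)\ge T_k(m_k^\star)$; memory beyond the first peak no longer helps locally.

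If $\rho_k$ is unimodal, it is non-increasing for all $m\ge m_k^\star$. Then $T_k$ is non-decreasing from $m_k^\star$ onward and strictly decreasing before it, so $m_k^\star$ is a global minimiser of $m\mapsto T_k(m)$. It is unique up to plateaus where $\rho_k$ is constant.

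I would also remark that without unimodality $m_k^\star$ is only the first local minimiser, because a later rise of the profile makes $T_k$ decrease again. Finally, I would point out that the phrase "global minimiser" refers to $T_k$, not to $\rho_k$.
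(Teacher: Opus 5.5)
Your proposal is correct and follows the paper's proof essentially line for line: you difference the closed form \eqref{eq:Tkm}, substitute $\Pi_k(m)=\Pi_k(m-1)(1-r_k(m))$, and read off the sign from $(1-r_k(m))\bigl(1/r_k(m+1)-1/r_k(m)\bigr)$ using strict monotonicity of $\rho\mapsto1-(1-\rho)^M$. The degenerate case you flag is already excluded by the standing Assumption~\ref{ass:m-pos}: $L_k\subseteq\X\setminus\Akp$ is nonempty, so $G(S)(\X\setminus\Akp\mid Q_m)\ge\alpha$, which gives $\rho_k(j)\le1-\alpha<1$ and hence $0<r_k(j)<1$ for all $j$ (exactly the $0<b<1$ the paper uses without comment), so no extra hypothesis or $\operatorname{sign}(0)$ convention is needed.
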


\begin{proof}
  From \eqref{eq:Tkm},
  $T_k(m)=\sum_{s=0}^{m-2}\Pi_k(s)+\Pi_k(m-1)/r_k(m)$, so
  \[
    T_k(m+1)-T_k(m)
    =\Pi_k(m-1)+\frac{\Pi_k(m)}{r_k(m+1)}-\frac{\Pi_k(m-1)}{r_k(m)} .
  \]
  Using $\Pi_k(m)=\Pi_k(m-1)\bigl(1-r_k(m)\bigr)$ and factoring
  $\Pi_k(m-1)$ gives \eqref{eq:nm-margin}.  Writing $b=r_k(m)$,
  $b'=r_k(m+1)$, the bracket is
  $1-\tfrac1b+\tfrac{1-b}{b'}$, which is $<0$ iff
  $\tfrac{1-b}{b'}<\tfrac{1-b}{b}$ iff (as $0<b<1$) $b'>b$, i.e.\ iff
  $\rho_k(m+1)>\rho_k(m)$ (since $p\mapsto1-(1-p)^M$ is increasing); this
  is \eqref{eq:nm-signlaw}.  Hence $T_k$ decreases exactly up to the
  first index where the profile stops increasing, which is
  \eqref{eq:nm-mstar}; unimodality makes this the unique global
  minimiser.
\end{proof}

\begin{corollary}[Recovery of the monotone hierarchy]\label{cor:nm-recover}
  If $\rho_k(\cdot)$ is non-decreasing for every $k$---the natural
  restatement of monotone learning (Assumption~\ref{ass:mono})---then by
  \eqref{eq:nm-signlaw} each $T_k(\cdot)$ is non-increasing, $m_k^\star=\infty$,
  and ``more memory never hurts,'' recovering Theorem~\ref{thm:hierarchy}.  At
  the other extreme $m=1$ gives $T_k(1)=1/r_k(1)$, the single-step bound
  with $q_k=\rho_k(1)$.
\end{corollary}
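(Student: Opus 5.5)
The corollary follows directly from the sign law \eqref{eq:nm-signlaw} of Theorem~\ref{thm:nm-margin}, plus a one-line evaluation of \eqref{eq:Tkm} at $m=1$. I take the two parts in turn.

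\emph{The monotone part.} Assume $\rho_k(m+1)\ge\rho_k(m)$ for every $m\ge1$. Two cases arise for the increment.

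If $\rho_k(m+1)>\rho_k(m)$, then \eqref{eq:nm-signlaw} gives $T_k(m+1)<T_k(m)$.

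If $\rho_k(m+1)=\rho_k(m)$, then $r_k(m+1)=r_k(m)=:b$, and the bracket in \eqref{eq:nm-margin} is
\[
1-\tfrac1b+\tfrac{1-b}{b}=0,
\]
so $T_k(m+1)=T_k(m)$. The edge case $b=1$ is harmless, since then $\Pi_k(m)=0$ and the same identity holds.

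So $T_k(\cdot)$ is non-increasing for every $k$. Summing over $k=k_0,\dots,K(\varepsilon)-1$, the bound \eqref{eq:nm-exit} of Theorem~\ref{thm:nm-exit} is non-increasing in $m$: more memory never hurts.

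\emph{The optimal depth.} In the strictly increasing case, the set in \eqref{eq:nm-mstar} is empty, so $m_k^\star=\infty$ by the convention $\min\emptyset=\infty$. If the profile has plateaus, the formal definition returns the first plateau index. There $T_k$ is already at its infimum, so the minimiser is not unique, and I will state this caveat rather than claim $m_k^\star=\infty$ literally.

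\emph{Link to Theorem~\ref{thm:hierarchy}.} Under Assumption~\ref{ass:mono}, monotonicity of $\rho_k$ is the natural reachable-window analogue of the hierarchy's hypothesis. The conclusion mirrors the ordering there, and by Theorem~\ref{thm:nm-exit} each bound is no larger than the corresponding $1/p_k^m$. I would present this as recovering the qualitative hierarchy, not as a literal derivation of Assumption~\ref{ass:mono} from the profile condition. That derivation would require comparing windows of different lengths, which the profile does not do, and this is the only delicate point.

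\emph{The case $m=1$.} The sum in \eqref{eq:Tkm} runs from $s=0$ to $m-2=-1$, so it is empty, and $\Pi_k(0)=1$. Hence
\[
T_k(1)=\frac{1}{r_k(1)}.
\]
Here $\mathcal W_k(1)$ is exactly the set of single pools at level $k$, so $\rho_k(1)$ is the single-step $q_k$ of Definition~\ref{def:qk}, specialised to the time-homogeneous kernel. This recovers Theorem~\ref{thm:base-htb}.
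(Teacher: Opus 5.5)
Your argument is the paper's: the sign law \eqref{eq:nm-signlaw} gives monotonicity of $T_k(\cdot)$, and at $m=1$ the sum in \eqref{eq:Tkm} is empty and $\Pi_k(0)=1$, so $T_k(1)=1/r_k(1)$. Your explicit check that the bracket in \eqref{eq:nm-margin} vanishes when $r_k(m+1)=r_k(m)$ is correct. In fact $1-\tfrac1b+\tfrac{1-b}{b}=0$ for every $b\in(0,1]$, so the $b=1$ edge case needs no separate treatment. You are also right that \eqref{eq:nm-mstar}, with its non-strict inequality, yields $m_k^\star=\infty$ literally only for a strictly increasing profile. That is a real looseness in the statement itself, and your hedged reading of ``recovering Theorem~\ref{thm:hierarchy}'' as a qualitative analogue is appropriate.

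The caveat you propose to state in its place is false, however. It is not true that $T_k$ is already at its infimum at the first plateau index, because a non-decreasing profile can stall and then rise again. Take $M=1$, so $r_k=\rho_k$, and let $\rho_k(1)=\rho_k(2)=0.1$ and $\rho_k(j)=0.2$ for $j\ge3$. Then \eqref{eq:nm-mstar} returns $m_k^\star=1$. But $T_k(1)=10$, $T_k(2)=1+0.9/0.1=10$, and $T_k(3)=1+0.9+0.81/0.2=5.95$, so the first-plateau index is not a minimiser.

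The correct statement under a non-decreasing profile is that $T_k$ is non-increasing. Hence $\inf_m T_k(m)=\lim_{m\to\infty}T_k(m)$, and no finite depth does better than every deeper one. The claim $m_k^\star=\infty$ then holds literally if \eqref{eq:nm-mstar} is read with the strict inequality $\rho_k(m+1)<\rho_k(m)$, i.e.\ the first strict descent of the profile. With that replacement for your plateau remark, the proof is complete.
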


\paragraph{A bias--variance profile model.}

The saturation model of Definition~\ref{def:sat} is the monotone special
case $\rho_k(j)=q_k^\infty-(q_k^\infty-q_k^0)e^{-c_kj}$.  To capture the
non-monotone regime we add an explicit staleness penalty: with one more
remembered pool the learner gains information (a saturating learning
term) but also conditions on data one round older (a linear staleness
term).

\begin{proposition}[Bias--variance profile]\label{prop:nm-biasvar}
  Let
  \begin{equation}
    \rho_k(j)=q_k^0+\underbrace{g_k\bigl(1-e^{-c_kj}\bigr)}_{\text{learning gain}}
              -\underbrace{d_k\,(j-1)}_{\text{staleness bias}},
    \qquad g_k,c_k>0,\ d_k\ge0,
    \label{eq:biasvar}
  \end{equation}
  clamped to $(0,1)$.  Then:
  \begin{enumerate}[label=(\alph*),nosep]
  \item if $0<d_k<g_kc_k$, the profile is unimodal with continuous peak
    $j_k^\star=c_k^{-1}\ln\!\bigl(g_kc_k/d_k\bigr)$, so
    $m_k^\star\in\{\lfloor j_k^\star\rfloor,\lceil j_k^\star\rceil\}$;
  \item if $d_k\ge g_kc_k$, then $\rho_k$ is non-increasing and
    $m_k^\star=1$ (memory never helps at level $k$);
  \item if $d_k=0$, \eqref{eq:biasvar} reduces to the saturation model
    and $m_k^\star=\infty$ (the monotone hierarchy).
  \end{enumerate}
\end{proposition}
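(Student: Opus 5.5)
The plan is to treat $\rho_k$ as the restriction to integers of the smooth function $h(x)=q_k^0+g_k(1-e^{-c_kx})-d_k(x-1)$ on $x\ge1$. We then read off $m_k^\star$ from the sign law \eqref{eq:nm-signlaw} and the first-peak formula \eqref{eq:nm-mstar} of Theorem~\ref{thm:nm-margin}. Differentiating gives $h'(x)=g_kc_ke^{-c_kx}-d_k$ and $h''(x)=-g_kc_k^2e^{-c_kx}<0$, so $h$ is strictly concave. Since $\rho_k$ is the restriction of a strictly concave function, it is unimodal as a sequence: its increments $\rho_k(j+1)-\rho_k(j)=\int_j^{j+1}h'$ are strictly decreasing in $j$. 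So once an increment is $\le0$, all later increments are negative. Hence by Theorem~\ref{thm:nm-margin} the first non-positive increment marks the global minimiser of $T_k(\cdot)$.

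For case (a), with $0<d_k<g_kc_k$, the equation $h'(x)=0$ has the unique solution $j_k^\star=c_k^{-1}\ln(g_kc_k/d_k)>0$. The function $h$ increases on $[1,j_k^\star]$ and decreases afterwards. The increment from $j$ to $j+1$ is therefore positive for $j+1\le j_k^\star$ and negative for $j\ge j_k^\star$. Only the single step straddling $j_k^\star$, from $\lfloor j_k^\star\rfloor$ to $\lceil j_k^\star\rceil$, has an undetermined sign. The first $m$ with $\rho_k(m+1)\le\rho_k(m)$ is thus $\lfloor j_k^\star\rfloor$ or $\lceil j_k^\star\rceil$.

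Two edge cases need care. If $j_k^\star<1$, we take $m_k^\star=1$, consistent with the floor/ceiling statement once the floor is clamped to $1$; I would note this convention. If $j_k^\star$ is an integer, then $\lfloor j_k^\star\rfloor=\lceil j_k^\star\rceil$ and the claim is immediate.

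For case (b), with $d_k\ge g_kc_k$, we have $h'(x)\le g_kc_ke^{-c_k}-d_k<0$ for all $x\ge1$. So $\rho_k$ is strictly decreasing, and $m_k^\star=1$ by \eqref{eq:nm-mstar}. For case (c), with $d_k=0$, the profile is $q_k^0+g_k(1-e^{-c_kj})$, which is the saturation model of Definition~\ref{def:sat} with $q_k^\infty=q_k^0+g_k$. It is strictly increasing, so no index satisfies the stopping condition, $m_k^\star=\infty$, and Corollary~\ref{cor:nm-recover} applies.

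The main obstacle is the clamp to $(0,1)$. Clamping is a monotone transformation, so it preserves the sign of each increment whenever it is strict. Where both values are clamped to the same endpoint, an increment becomes $0$, which can shift the first index with $\rho_k(m+1)\le\rho_k(m)$. I would handle this by assuming the parameters keep $h$ in $(0,1)$ on the relevant range $j\le\lceil j_k^\star\rceil+1$. Alternatively, I would observe that clamping at the upper end only flattens the profile near its peak, which leaves $m_k^\star$ at or before $\lceil j_k^\star\rceil$. I would state this as a mild non-degeneracy proviso rather than try to resolve every degenerate clamped configuration.
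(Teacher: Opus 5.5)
Your proposal is correct and follows essentially the paper's route: strict concavity of the continuous extension from $h''<0$, the unique critical point $j_k^\star=c_k^{-1}\ln(g_kc_k/d_k)$, and the first-peak rule \eqref{eq:nm-mstar} (via the sign law \eqref{eq:nm-signlaw}) to turn the continuous peak into the integer optimum. Your added details go beyond the paper's ``boundary cases follow by inspection'' and make the argument more complete: the strictly decreasing integer increments, the single straddling step, and especially the non-degeneracy proviso on the upper clamp, without which (a) and (c) can genuinely fail once the unclamped profile reaches the upper clamp before its peak.
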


\begin{proof}
  Treating $j$ as continuous, $\rho_k'(j)=g_kc_ke^{-c_kj}-d_k$ and
  $\rho_k''(j)=-g_kc_k^2e^{-c_kj}<0$, so $\rho_k$ is concave; setting
  $\rho_k'=0$ gives $j_k^\star=c_k^{-1}\ln(g_kc_k/d_k)$, which lies in
  $(0,\infty)$ iff $0<d_k<g_kc_k$.  The boundary cases follow by
  inspection, and \eqref{eq:nm-mstar} converts the peak into the integer
  optimum.
\end{proof}

\paragraph{Illustration.}
Table~\ref{tab:nm-example} tabulates \eqref{eq:Tkm} for $M=8$ and the profile
\eqref{eq:biasvar} with $q_k^0=0.006$, $g_k=0.05$, $c_k=0.55$,
$d_k=0.005$ (so $g_kc_k=0.0275>d_k$, giving an interior optimum).  The
sojourn $T_k(m)$ is U-shaped, minimised at $m_k^\star=3$ in exact
agreement with the first-peak rule \eqref{eq:nm-mstar}; deeper memory
then degrades sharply as staleness dominates, mirroring the empirical
U-shape of Figure~\ref{fig:optdepth}.

\begin{table}[ht]
\centering
\caption{Non-monotone sojourn $T_k(m)$ from \eqref{eq:Tkm} under the
  bias--variance profile \eqref{eq:biasvar} ($M=8$).  The profile peaks
  at $j=3$, and $T_k$ is minimised there ($m_k^\star=3$), as predicted by
  Theorem~\ref{thm:nm-margin}.}
\label{tab:nm-example}
\renewcommand{\arraystretch}{1.2}
\begin{tabular}{@{}rccc@{}}
\toprule
$m$ & $\rho_k(m)$ & $r_k(m)$ & $T_k(m)$ \\
\midrule
1  & 0.0272 & 0.198 & 5.06 \\
2  & 0.0344 & 0.244 & 4.29 \\
3  & 0.0364 & 0.257 & \textbf{4.17} \\
4  & 0.0355 & 0.251 & 4.21 \\
5  & 0.0328 & 0.234 & 4.30 \\
6  & 0.0292 & 0.211 & 4.43 \\
8  & 0.0204 & 0.152 & 4.76 \\
10 & 0.0108 & 0.083 & 5.47 \\
12 & 0.0009 & 0.007 & 18.88 \\
\bottomrule
\end{tabular}
\end{table}

\paragraph{Level-adaptive memory.}

Because the running level $\ell(P_t)$ is observable---it is the rank of
the current best fitness---the algorithm can choose its window depth as
a function of the level.  Using depth $m_k^\star$ while at level $k$
leaves each per-level sojourn analysis unchanged (it depends only on the
within-level window evolution), so the resulting exit-time bound is
$\sum_k T_k(m_k^\star)=\sum_k\min_m T_k(m)$.

\begin{corollary}[Adaptive depth dominates any fixed depth]
\label{cor:nm-adaptive}
  Let $m^\star=\arg\min_m\sum_k T_k(m)$ be the best fixed depth.  The
  level-adaptive policy that uses depth $m_k^\star$ at level $k$ satisfies
  \[
    \sum_{k=k_0}^{K(\varepsilon)-1}T_k(m_k^\star)
    =\sum_{k=k_0}^{K(\varepsilon)-1}\min_{m}T_k(m)
    \ \le\ \min_{m}\sum_{k=k_0}^{K(\varepsilon)-1}T_k(m)
    =\sum_{k=k_0}^{K(\varepsilon)-1}T_k(m^\star),
  \]
  with strict inequality whenever the optimal depths $m_k^\star$ are not
  all equal.
\end{corollary}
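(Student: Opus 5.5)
The plan has three parts: justify that the level-adaptive policy is covered by the per-level sojourn analysis, prove the termwise inequality, and then handle the strictness claim.

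\emph{Validity of the adaptive bound.} The level-adaptive algorithm is not a single $m$-step chain, so Theorem~\ref{thm:nm-exit} does not apply verbatim. I would re-run the proof of Lemma~\ref{lem:nm-sojourn} with depth $m_k^\star$ during the level-$k$ sojourn. Elitism (Lemma~\ref{lem:monotone}) fixes the window structure: at round $s$ of the sojourn the depth-$m_k^\star$ window lies in $\mathcal W_k(\min(s+1,m_k^\star))$, because its most recent entries are at level $k$ and the older ones are at levels $\le k$. Hence the conditional escape probability is at least $r_k(\min(s+1,m_k^\star))$. Positivity (Assumption~\ref{ass:m-pos}, imposed for each depth used) gives almost-sure finiteness, and telescoping $\tau_\varepsilon=\sum_k(\tau_{k+1}-\tau_k)$ as in Theorem~\ref{thm:nm-exit} gives $\E[\tau_\varepsilon\mid P_0]\le\sum_k T_k(m_k^\star)$.

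\emph{The inequality chain.} The first equality $T_k(m_k^\star)=\min_m T_k(m)$ is the defining property of $m_k^\star$ as the per-level exit-time minimiser. By Theorem~\ref{thm:nm-margin} this holds when the profile is unimodal; otherwise I would simply define $m_k^\star$ as an argmin of $T_k$. The inequality is termwise: for every fixed $m$ and every $k$, $\min_{m'}T_k(m')\le T_k(m)$. Summing over $k$ gives $\sum_k\min_{m'}T_k(m')\le\sum_k T_k(m)$, and taking the minimum over $m$ gives the claim. The last equality is the definition of $m^\star$.

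\emph{Strictness (main obstacle).} Suppose equality held. Since every term satisfies $\min_{m'}T_k(m')\le T_k(m^\star)$, equality forces $T_k(m^\star)=\min_{m'}T_k(m')$ for every $k$, so $m^\star$ is a common minimiser of all the $T_k$. This contradicts the hypothesis provided the per-level minimisers are unique. Under a strictly unimodal profile, the strict sign law \eqref{eq:nm-signlaw} gives uniqueness, since $T_k$ strictly decreases before $m_k^\star$ and strictly increases after it. In general I would state strictness as holding whenever no single depth simultaneously minimises every $T_k$, which is what ``optimal depths not all equal'' means when the argmins are unique.
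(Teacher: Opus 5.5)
Your proposal is correct and follows the paper's own argument: the termwise bound $\min_m T_k(m)\le T_k(m^\star)$ is summed over $k$, and strictness comes from a level where that termwise inequality is strict. Your opening paragraph on the validity of the adaptive bound supplies the justification the paper gives in the paragraph just before the corollary.

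Your uniqueness caveat is a genuine sharpening. The paper identifies $T_k(m^\star)>\min_m T_k(m)$ with $m_k^\star\ne m^\star$, which tacitly assumes each $T_k$ has a unique minimiser, and this fails with ties. If $\rho_k(1)=\rho_k(2)>\rho_k(3)>\cdots$, the first-peak rule gives $m_k^\star=1$ although $T_k(1)=T_k(2)$. If another level's profile peaks strictly at $2$, then depth $2$ attains equality even though the $m_k^\star$ differ. So the right condition for strictness is exactly yours: no single depth minimises every $T_k$.
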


\begin{proof}
  Termwise, $T_k(m_k^\star)=\min_m T_k(m)\le T_k(m^\star)$; sum over $k$.
  Strictness holds as soon as some $T_k(m^\star)>\min_m T_k(m)$, i.e.\
  $m_k^\star\ne m^\star$ for some $k$.
\end{proof}

Thus, in the absence of monotone learning, the right design is not
``use as much memory as possible'' (Theorem~\ref{thm:hierarchy}) but
``match the memory depth to the learning profile of the current level.''
When the profile is increasing this prescribes unbounded memory and
recovers the hierarchy; when staleness bites it prescribes a finite,
level-tuned window, and Corollary~\ref{cor:nm-adaptive} quantifies the
gain over any one-size-fits-all depth.

\subsection{Numerical illustration: within-level dynamics}
\label{sec:num-within}

The global U-shape of Figure~\ref{fig:optdepth} is the aggregate of what
happens at each fitness level.  Section~\ref{sec:nonmonotone} predicts
that, when monotone learning fails, the governing object is the level-$k$
learning profile $\rho_k(j)$---the escape probability as a function of
the number $j$ of current-level pools the window holds---and that the
per-level sojourn is U-shaped with a finite optimal depth
(Theorems~\ref{thm:nm-margin} and~\ref{prop:nm-biasvar}).  We measure both
directly on the noisy-oracle algorithm of Section~\ref{sec:num-memory}.

Figure~\ref{fig:nm-profile} shows the empirical profile at three levels.
Each is \emph{unimodal}---rising as additional current-level pools reduce
estimation variance, then falling as the oldest pooled data turn
stale---exactly the bias--variance shape posited in
Proposition~\ref{prop:nm-biasvar}.  Moreover the peak shifts to larger
$j$ as the level $k$ increases: harder levels reward deeper memory.

\begin{figure}[ht]
\centering
\begin{tikzpicture}
\begin{axis}[
    width=0.7\textwidth, height=0.42\textwidth,
    xlabel={current-level pools in window, $j$},
    ylabel={escape probability $\rho_k(j)$},
    xmin=1, xmax=10, ymin=0.03, ymax=0.085,
    legend pos=south east, legend cell align=left,
    grid=both, major grid style={black!12}, tick align=outside, thick,
]
\addplot[blue, mark=*, mark size=1.3pt] coordinates {
(1,0.0517)(2,0.0658)(3,0.0699)(4,0.0751)(5,0.0802)(6,0.0797)(7,0.0740)};
\addlegendentry{$k=14$}
\addplot[teal, mark=square*, mark size=1.3pt] coordinates {
(1,0.0393)(2,0.0505)(3,0.0577)(4,0.0595)(5,0.0600)(6,0.0558)(7,0.0666)(8,0.0652)(9,0.0623)};
\addlegendentry{$k=15$}
\addplot[orange, mark=triangle*, mark size=1.5pt] coordinates {
(1,0.0336)(2,0.0462)(3,0.0466)(4,0.0507)(5,0.0527)(6,0.0568)(7,0.0564)(8,0.0582)(9,0.0582)(10,0.0565)};
\addlegendentry{$k=16$}
\end{axis}
\end{tikzpicture}
\caption{Empirical level-$k$ learning profile $\rho_k(j)$
  (Definition~\ref{def:profile}) on noisy \textsc{OneMax}
  ($N=6$, $M=12$, $\sigma=4$; $\ge10^3$ proposals per point, $900$ runs).
  Each profile is unimodal, and its peak moves to larger $j$ as the level
  rises---the empirical counterpart of the bias--variance profile of
  Proposition~\ref{prop:nm-biasvar}.}
\label{fig:nm-profile}
\end{figure}

Figure~\ref{fig:nm-sojourn} confirms the consequence for exit time: the
measured per-level sojourn $\bar T_k(m)$ is U-shaped in the memory depth
$m$, with a finite minimiser $m_k^\star$ that \emph{increases with the
level},
\[
  m_k^\star = 1,\,2,\,2,\,3,\,4,\,8,\,9 \quad\text{for}\quad k=13,\dots,19 .
\]
The global optimum $m^\star=5$ of Figure~\ref{fig:optdepth} is therefore a
cross-level compromise---too shallow for the high bottleneck levels, too
deep for the low ones---which is exactly why no single fixed depth is
ideal and why Corollary~\ref{cor:nm-adaptive} proposes matching the depth
to the level.

\begin{figure}[ht]
\centering
\begin{tikzpicture}
\begin{axis}[
    width=0.72\textwidth, height=0.44\textwidth,
    xlabel={memory depth $m$}, ylabel={per-level sojourn $\bar T_k(m)$},
    xmin=1, xmax=12, ymin=4, ymax=18,
    legend pos=north west, legend cell align=left,
    grid=both, major grid style={black!12}, tick align=outside, thick,
]
\addplot[orange, mark=triangle*, mark size=1.4pt] coordinates {
(1,17.32)(2,13.54)(3,11.58)(4,10.75)(5,10.33)(6,10.11)(7,9.99)(8,9.76)(9,10.02)(10,11.41)(11,11.33)(12,11.99)};
\addlegendentry{$k=18$\quad($m_k^\star=8$)}
\addplot[teal, mark=square*, mark size=1.3pt] coordinates {
(1,11.10)(2,8.19)(3,6.71)(4,6.56)(5,6.72)(6,6.72)(7,7.10)(8,7.34)(9,7.70)(10,8.43)(11,9.14)(12,9.54)};
\addlegendentry{$k=17$\quad($m_k^\star=4$)}
\addplot[blue, mark=*, mark size=1.3pt] coordinates {
(1,5.93)(2,5.27)(3,4.59)(4,4.98)(5,4.96)(6,5.02)(7,5.52)(8,5.89)(9,5.83)(10,6.25)(11,7.14)(12,7.21)};
\addlegendentry{$k=16$\quad($m_k^\star=3$)}
\end{axis}
\end{tikzpicture}
\caption{Measured per-level sojourn $\bar T_k(m)$ versus memory depth
  (same setting as Figure~\ref{fig:nm-profile}; $500$ runs per depth).  Every
  level is U-shaped---memory helps then hurts---and the optimal depth
  $m_k^\star$ grows with the level, confirming the finite, level-dependent
  optimum predicted by Theorem~\ref{thm:nm-margin}.}
\label{fig:nm-sojourn}
\end{figure}

\paragraph{Scope of the sharp results.}
The first-peak identity (Theorem~\ref{thm:nm-margin}) and the closed-form
sojourn (Proposition~\ref{prop:nm-exact}) hold \emph{exactly} under
composition-homogeneity---when the escape probability depends only on the
count $j$ of current-level pools.  The naive windowed learner used here
additionally pools stale \emph{lower}-level data, which couples the
levels and makes it only approximately composition-homogeneous;
accordingly it confirms the theory qualitatively (unimodal profile,
finite level-dependent optimum) but not to the digit, and a
level-adaptive schedule built from the measured $m_k^\star$ does not beat
the best fixed depth in our runs.  A level-gated learner (training only
on current-level pools) restores approximate composition-homogeneity but,
by removing the staleness bias, also flattens the non-monotonicity.
Designing a single learner that is at once strongly non-monotone and
composition-homogeneous---so that the level-adaptive gain of
Corollary~\ref{cor:nm-adaptive} is realised---is a concrete algorithmic
question raised by the theory.  This mirrors the status of monotone
learning in Section~\ref{sec:comparison}: the numerics confirm the
qualitative theory, while the sharp quantitative statements isolate
exactly the extra structure they require.

\section{The Cost of Evaluations}
\label{sec:control}

The bounds so far count \emph{rounds}.  In practice the binding resource
is the number of \emph{fitness-oracle evaluations}---docking runs, assays,
syntheses.  We now recast the theory in the currency of evaluations and
identify the evaluation-optimal operation of the algorithm; the treatment
of noisy oracles follows in Section~\ref{sec:ctrl-noise}.  Its experiments
run on the discrete test problem of Section~\ref{sec:num-setup}.

\subsection{Evaluation cost and the optimal budget}
\label{sec:ctrl-budget}

\begin{definition}[Evaluation cost]\label{def:evalcost}
  Let $\evals_\varepsilon$ be the number of oracle evaluations of
  $\varphi$ until an $\varepsilon$-molecule is first scored.  For
  \Cref{alg:gen-sel} ($N$ initial scorings, then $M$ proposals per round),
  $\evals_\varepsilon=N+M\,\tau_\varepsilon$, so by
  Theorem~\ref{thm:base-htb}
  $\E[\evals_\varepsilon\mid P_0]\le N+M\sum_{k=k_0}^{K(\varepsilon)-1}1/p_k$.
\end{definition}

Escaping level $k$ costs, in expectation, $M/p_k$ evaluations
($1/p_k$ rounds of $M$ scorings each), i.e.\
$E_k(M)=M/\bigl(1-(1-q_k)^M\bigr)$.

\begin{theorem}[Monotone evaluation cost]\label{thm:monoeval}
  For every $q\in(0,1)$, $M\mapsto E(M)=M/\bigl(1-(1-q)^M\bigr)$ is
  non-decreasing on integers $M\ge1$; hence $\min_{M\ge1}E(M)=E(1)=1/q$.
\end{theorem}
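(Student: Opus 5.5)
The plan is to compare consecutive values directly. Write $x=1-q\in(0,1)$, so that $E(M)=M/(1-x^M)$. The claim $E(M+1)\ge E(M)$ is equivalent to
\[
  (M+1)(1-x^M)\ \ge\ M(1-x^{M+1}).
\]
To see where this comes from, divide $1-x^M$ by $1-x$ and expand the geometric series: $1-x^M=(1-x)\sum_{i=0}^{M-1}x^i$. This gives
\[
  E(M)=\frac{M}{(1-x)\,S_M},\qquad S_M=\sum_{i=0}^{M-1}x^i .
\]
Hence $E(M)=\frac{1}{q}\cdot\frac{1}{a_M}$, where $a_M=S_M/M$ is the arithmetic mean of $1,x,\dots,x^{M-1}$. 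The statement therefore reduces to showing that $a_M$ is non-increasing in $M$.

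The key step is the monotonicity of $a_M$. The terms $x^i$ are decreasing in $i$ because $0<x<1$. Appending the term $x^M$, which is at most every earlier term and hence at most their mean $a_M$, cannot raise the average. Explicitly,
\[
  a_{M+1}=\frac{M a_M+x^M}{M+1}\le \frac{M a_M+a_M}{M+1}=a_M,
\]
using $x^M\le x^i$ for all $i<M$, which gives $x^M\le a_M$. Then $E(M+1)\ge E(M)$ for every integer $M\ge1$.

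The minimum is the case $M=1$: $a_1=1$, so $E(1)=1/q$, and monotonicity gives $\min_{M\ge1}E(M)=E(1)=1/q$.

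There is no real obstacle here. The only points needing care are that $q\in(0,1)$ makes all denominators positive, and that the inequality is non-strict; it is in fact strict for $q\in(0,1)$, since $x^M<x^{M-1}$. As a cross-check, the same conclusion should follow from convexity of $M\mapsto x^M$, through a chord-slope argument on $(1-x^M)/M$. I will present the averaging argument because it is the shortest.
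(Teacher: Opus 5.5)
Your proof is correct and complete, but it takes a different route from the paper's. The paper clears denominators and reduces $E(M+1)\ge E(M)$ to the single scalar inequality $(1-q)^M(1+Mq)\le1$, which it closes at once with $(1-q)^M\le e^{-Mq}$ and $1+Mq\le e^{Mq}$. You instead factor $1-(1-q)^M=q\sum_{i=0}^{M-1}(1-q)^i$ and write $E(M)=1/(q\,a_M)$, with $a_M$ the average of $1,(1-q),\dots,(1-q)^{M-1}$. Monotonicity then follows because appending a term no larger than every earlier one cannot raise an average.

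The paper's version is a one-liner, and its key inequality holds for every real $M\ge0$, so it uses integrality of $M$ only through the unit step. Yours relies on the finite geometric sum. In exchange, yours is purely algebraic and gives an exact, interpretable formula rather than just an inequality. Indeed $\sum_{i<M}(1-q)^i=\E[\min(G,M)]$ for $G$ geometric with success probability $q$. This is the expected number of batch members that a sequential scorer, stopping at the first escape, would actually evaluate. Hence $E(M)=\frac1q\cdot M/\E[\min(G,M)]$, and $1/a_M$ is exactly the overhead factor behind the informal explanation after Corollary~\ref{cor:minbudget} that the ``surplus $M-1$ scorings are wasted''.
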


\begin{proof}
  With $a=1-q$, $E(M{+}1)\ge E(M)$ reduces to $a^{M}(1+Mq)\le1$.  Since
  $a^{M}=(1-q)^M\le e^{-Mq}$ and $1+Mq\le e^{Mq}$, the product is $\le1$.
\end{proof}

\begin{corollary}[Evaluation-minimal budget]\label{cor:minbudget}
  $\E[\evals_\varepsilon\mid P_0]\le N+\sum_{k=k_0}^{K(\varepsilon)-1}M/(1-(1-q_k)^M)$
  is non-decreasing in $M$ and minimised at $M=1$:
  \[
    \E[\evals_\varepsilon\mid P_0]\ \le\ N+\sum_{k=k_0}^{K(\varepsilon)-1}\frac1{q_k}
    \ =\ N+D,
  \]
  the landscape difficulty (Definition~\ref{def:difficulty}).  Among all batch
  sizes, sequential evaluation $(M=1)$ minimises oracle calls: a batch of
  $M$ is scored in full even when its first proposal already escapes, and
  the surplus $M-1$ scorings are wasted.
\end{corollary}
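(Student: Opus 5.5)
The corollary should follow from Definition~\ref{def:evalcost} together with Theorem~\ref{thm:monoeval}, applied one level at a time. I would carry out three steps.

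\emph{Step 1: the evaluation bound for fixed $M$.} Start from $\evals_\varepsilon=N+M\tau_\varepsilon$. Take expectations and apply Theorem~\ref{thm:base-htb}, whose proof uses only Assumptions~\ref{ass:elitism} and~\ref{ass:positivity}, so it holds for every batch size $M\ge1$. This gives
\[
  \E[\evals_\varepsilon\mid P_0]\le N+\sum_{k=k_0}^{K(\varepsilon)-1}\frac{M}{1-(1-q_k)^M}=N+\sum_{k=k_0}^{K(\varepsilon)-1}E_k(M).
\]
One point needs care. The quantities $q_k$ in Definition~\ref{def:qk} are infima of single-proposal probabilities $G_t(\Akp\mid P)$, and these do not depend on $M$. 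I would check this explicitly, so that the same $q_k$ appear in the bound for every $M$ and the right-hand side can be read as a function of $M$ alone.

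\emph{Step 2: monotonicity in $M$.} By the remark after Definition~\ref{def:qk}, $q_k\ge\alpha|\Akp|>0$. If $q_k<1$, Theorem~\ref{thm:monoeval} applies with $q=q_k$ and shows that $M\mapsto E_k(M)$ is non-decreasing on the integers. The degenerate case $q_k=1$ must be handled separately: there $E_k(M)=M$, which is also non-decreasing, with $E_k(1)=1=1/q_k$. A finite sum of non-decreasing functions is non-decreasing, and adding the constant $N$ preserves this. The bound is therefore minimised at $M=1$. At $M=1$ we have $E_k(1)=1/q_k$, so the minimum value is $N+\sum_k 1/q_k$, which is $N+D$ by Definition~\ref{def:difficulty}.

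\emph{Step 3: the waste interpretation.} This is explanatory rather than part of the inequality. The round ends at the first escaping proposal within a batch. Conditional on escaping, the index of that proposal is, in the small-$q$ regime, close to $1$, yet all $M$ proposals are scored; the excess is the surplus. I would state this heuristically, noting that the real content is $a^M(1+Mq)\le1$ from Theorem~\ref{thm:monoeval}.

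I do not expect a serious obstacle. The only subtle step is the one flagged in Step 1: making clear that the minimisation is over the \emph{upper bound}, not over the true expectation, and that the $q_k$ are held fixed as $M$ varies.
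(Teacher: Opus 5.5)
Your Steps 1--2 are the argument the paper intends (it gives no separate proof): combine $\evals_\varepsilon=N+M\tau_\varepsilon$ with Theorem~\ref{thm:base-htb}, apply Theorem~\ref{thm:monoeval} level by level, and use $E_k(1)=1/q_k$; your extra care about the case $q_k=1$ and about minimising the bound rather than the true expectation is sound. One correction to your Step~3 heuristic: conditional on at least one escape, the index of the first escaping proposal has law $q_k(1-q_k)^{j-1}/\bigl(1-(1-q_k)^M\bigr)$. When $Mq_k\ll1$ this is approximately uniform on $\{1,\dots,M\}$, not concentrated near $1$, so the expected surplus is about $(M-1)/2$, and the paper's ``$M-1$ wasted'' is the worst case.
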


This is a deceptively simple but practically important result.  In drug
design, where each oracle call---a docking run, an assay, a synthesis and
measurement---dominates the cost, the optimal strategy is to propose one
candidate at a time rather than large batches.  The intuition is
straightforward: when a batch of $M$ candidates is scored, even if the
very first one already achieves the target fitness, the remaining $M-1$
evaluations are wasted.

\begin{remark}[Evaluation--round Pareto frontier]\label{rem:pareto}
  The budget governs two opposing costs,
  $\E[\evals_\varepsilon]\le N+M\sum_k1/p_k$ (evaluations, $\uparrow$ in
  $M$) versus $\E[\tau_\varepsilon]\le\sum_k1/p_k$ (rounds, $\downarrow$ in
  $M$).  Large $M$ buys fewer rounds (wall-clock, if the oracle is
  parallelisable) at the price of more evaluations; the extremes are $M=1$
  (evaluation-optimal) and $M\to\infty$ (one round per level).  For
  drug-design oracles, where each call dominates cost, the relevant corner
  is $M=1$.
\end{remark}

\subsection{The evaluation-minimal algorithm}
\label{sec:ctrl-algo}

Three refinements lower the constant in Corollary~\ref{cor:minbudget} without
weakening the guarantee.  \emph{(i) Memoisation:} cache scored molecules
and charge only cache misses, replacing $\evals_\varepsilon$ by the count
of \emph{distinct} molecules scored.  \emph{(ii) Early stopping:}
interleave scoring with the stopping test, halting on the first
$\varepsilon$-molecule.  \emph{(iii) Level-adaptive memory:} at $M=1$ the
per-level evaluation cost is exactly the within-level sojourn $T_k(m)$ of
Section~\ref{sec:nonmonotone} (eq.~\eqref{eq:Tkm} at $M=1$), so choosing the depth
$m_k^\star=\argmin_m T_k(m)$ of Theorem~\ref{thm:nm-margin} minimises it; since
$\ell(P_t)$ is observable this is online.  Combining the three refinements
yields EM-SGS (\Cref{alg:emsgs}).

\begin{algorithm}[ht]
\caption{Evaluation-Minimal Sequential Generative Selection (EM-SGS)}\label{alg:emsgs}
\begin{algorithmic}[1]
\Require $\varepsilon$; pool $P_0$ ($N$ scored molecules); depths $\{m_k^\star\}$
\State cache $\cache\leftarrow P_0$ with their oracle values;\ \ $\evals\leftarrow N$
\Repeat
  \State $k\leftarrow\ell(P_t)$; build $G_t$ from the last $m_k^\star$ elite pools; sample $y\sim G_t(\cdot\mid P_t)$
  \If{$y\in\cache$}
    \State reuse the cached value
  \Else
    \State score $\varphi(y)$;\ $\cache\leftarrow\cache\cup\{y\}$;\ $\evals\leftarrow\evals+1$
  \EndIf
  \If{$\varphi(y)\ge\phist-\varepsilon$}
    \Return $y,\ \evals$ \Comment{early stop}
  \EndIf
  \State $P_{t+1}\leftarrow\topN(P_t\cup\{y\})$;\ refit $G_{t+1}$
\Until{stopped}
\end{algorithmic}
\end{algorithm}

\begin{proposition}[Guarantee for EM-SGS]\label{prop:emsgs}
  Under Assumptions~\ref{ass:elitism} and~\ref{ass:m-pos}, EM-SGS reaches
  an $\varepsilon$-molecule almost surely, with
  \[
    \E[\evals_\varepsilon\mid P_0]
    \ \le\ N+\sum_{k=k_0}^{K(\varepsilon)-1}\min_m T_k(m)
    \ \le\ N+\sum_{k=k_0}^{K(\varepsilon)-1}\frac1{q_k^{m_k^\star}},
  \]
  and memoisation makes the realised distinct-evaluation count no larger.
\end{proposition}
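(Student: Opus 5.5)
The proof will treat EM-SGS as the $m$-step algorithm of \Cref{alg:gen-sel} with $M=1$, but with a level-dependent window depth. It will also use the bookkeeping that every round costs at most one oracle call. The argument has four steps.

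\emph{Step 1: almost-sure convergence.} At each round a single proposal is drawn from $G_t(\cdot\mid P_t)$, which is built from the last $m_k^\star$ pools. Assumption~\ref{ass:m-pos} is stated for windows; I will read it as holding uniformly over the finitely many depths in use. Under that reading, every $x^*\in\Neps$ is proposed with probability at least $\alpha$ in each round. The argument of Theorem~\ref{thm:base-conv} with $M=1$ then gives
\[
\Prob(\text{no }\varepsilon\text{-molecule proposed by round }T)\le(1-\alpha)^T .
\]
Early stopping halts exactly at such a proposal, so termination happens almost surely. Elitism (Assumption~\ref{ass:elitism}) is preserved by $\topN(P_t\cup\{y\})$, so Lemma~\ref{lem:monotone} applies unchanged.

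\emph{Step 2: from evaluations to rounds.} After the $N$ initial scorings, each round adds at most one to $\evals$: exactly one on a cache miss, zero on a hit. Hence $\evals_\varepsilon\le N+\tau_\varepsilon$. Memoisation only removes charges, which gives the final clause of the statement. Early stopping can only shorten the run relative to the round at which $P_{t+1}$ would enter $\Oms$. It stops at the first scored $\varepsilon$-molecule, and that molecule is proposed in the very round the pool would enter $\Oms$. So $\E[\evals_\varepsilon\mid P_0]\le N+\E[\tau_\varepsilon\mid P_0]$.

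\emph{Step 3: per-level sojourns.} I decompose $\tau_\varepsilon=\sum_{k=k_0}^{K-1}(\tau_{k+1}-\tau_k)$ as in Theorem~\ref{thm:nm-exit}. During a sojourn at level $k$, the algorithm uses the fixed depth $m_k^\star$. The window structure argument of Lemma~\ref{lem:nm-sojourn} then applies verbatim: the $s$-th round of the sojourn has window in $\mathcal W_k(\min(s+1,m_k^\star))$. This gives $\E[\tau_{k+1}-\tau_k]\le T_k(m_k^\star)=\min_m T_k(m)$ at $M=1$. This step is the main obstacle. The window looks back over up to $m_k^\star$ pools, and some of these were visited while the algorithm was at lower levels. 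Those pools are still at levels $\le k$, however, so the window lies in $\mathcal W_k(j)$ regardless of which depth was in force earlier. That is all the profile infimum requires. It is the point made informally before Corollary~\ref{cor:nm-adaptive}, and I would make it explicit, noting that $\rho_k$ is defined with the depth-$m_k^\star$ learner. Summing over $k$ gives the first inequality.

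\emph{Step 4: second inequality.} By Theorem~\ref{thm:nm-exit} applied at depth $m=m_k^\star$ with $M=1$, we have $T_k(m_k^\star)\le 1/p_k^{m_k^\star}$. With $M=1$, $p_k^{m}=q_k^{m}$, so $T_k(m_k^\star)\le 1/q_k^{m_k^\star}$. Summing over $k$ completes the chain. The only care needed is that $q_k^{m_k^\star}$ refers to the worst-case escape probability of Definition~\ref{def:m-qk} for the depth-$m_k^\star$ learner. Remark~\ref{rem:profile-vs-qkm} supplies the required comparison $\rho_k(j)\ge q_k^{m_k^\star}$.
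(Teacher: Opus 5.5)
Your proof is correct and follows the paper's argument. It gets almost-sure convergence from Theorem~\ref{thm:base-conv} at $M=1$, bounds the per-level evaluation cost by the $M=1$ sojourn $T_k(m_k^\star)$, and then uses $T_k(m)\le 1/p_k^m=1/q_k^m$ from Theorem~\ref{thm:nm-exit}, with caching only removing charges. You also make explicit what the paper leaves implicit: the bookkeeping $\evals_\varepsilon\le N+\tau_\varepsilon$ under early stopping and memoisation, the fact that a level-dependent depth still places every within-level window in $\mathcal W_k(\min(s+1,m_k^\star))$, and the identity $T_k(m_k^\star)=\min_m T_k(m)$ for $m_k^\star$ the argmin as in Section~\ref{sec:ctrl-algo}.
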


\begin{proof}
  At $M=1$, $p_k=q_k>0$ under Assumption~\ref{ass:m-pos}, so a.s.\ convergence
  follows from Theorem~\ref{thm:base-conv} with $M=1$.  The per-level evaluation
  cost equals the $M=1$ sojourn $T_k(m)$; minimising over $m$ gives
  $\min_m T_k(m)\le T_k(m_k^\star)\le1/q_k^{m_k^\star}$ by the bound
  $T_k(m)\le1/p_k^m$ of Theorem~\ref{thm:nm-exit} at $M=1$.  Caching only removes
  repeated scorings.
\end{proof}

EM-SGS is thus the joint minimiser over the two design axes: the budget
($M=1$, Theorem~\ref{thm:monoeval}) and the memory depth ($m_k^\star$,
Theorem~\ref{thm:nm-margin}).

\paragraph{Numerical illustration: the evaluation--round frontier.}

We test on the \textsc{OneMax} space of Section~\ref{sec:num-setup}.

\paragraph{Evaluation--round frontier (noiseless).}
Table~\ref{tab:eval} and Figure~\ref{fig:pareto} report total oracle evaluations and
rounds to the optimum ($n=40$, $N=20$, $m=1$; mean of $200$ runs) as $M$
varies.  Evaluations rise with $M$ exactly as Theorem~\ref{thm:monoeval}
predicts---minimal at $M=1$ ($240$ calls), $29\%$ below $M=32$
($338$)---while rounds fall from $236$ to $11$; memoisation and early
stopping remove a further $\sim\!6\%$.  The frontier has a knee near
$M=2$--$3$ (rounds already halved for almost no extra evaluations).

\begin{table}[ht]
\centering
\caption{Evaluation cost versus budget on \textsc{OneMax}
  ($n=40$, $N=20$, $m=1$; mean of $200$ runs).}
\label{tab:eval}
\renewcommand{\arraystretch}{1.15}
\begin{tabular}{@{}rccc@{}}
\toprule
budget $M$ & evals (plain) & evals (memo.+early stop) & rounds \\
\midrule
1 & 256 & \textbf{240} & 236 \\
2 & 257 & 241 & 118 \\
5 & 261 & 248 & 48 \\
8 & 272 & 258 & 32 \\
12 & 290 & 274 & 22 \\
20 & 314 & 298 & 15 \\
32 & 359 & 338 & 11 \\
\bottomrule
\end{tabular}
\end{table}

\begin{figure}[ht]
\centering
\begin{tikzpicture}
\begin{axis}[
    width=0.7\textwidth, height=0.42\textwidth,
    xlabel={rounds $\E[\tau_\varepsilon]$ (log scale)},
    ylabel={oracle evaluations $\E[\evals_\varepsilon]$},
    xmode=log, xmin=9, xmax=300, ymin=230, ymax=350,
    grid=both, major grid style={black!12}, minor grid style={black!6},
    tick align=outside, thick,
    nodes near coords, point meta=explicit symbolic,
    every node near coord/.append style={font=\scriptsize, anchor=south west},
]
\addplot[blue, mark=*, mark size=1.6pt] coordinates {
(236,240)[\,$M{=}1$] (118,241)[$2$] (48,248)[$5$] (32,258)[$8$]
(22,274)[$12$] (15,298)[$20$] (11,338)[\,$M{=}32$]};
\end{axis}
\end{tikzpicture}
\caption{Evaluation--round Pareto frontier on \textsc{OneMax}, parametrised
  by the budget $M$ (mean of $200$ runs; evaluations with memoisation and
  early stopping).  Lower-right ($M=1$) minimises oracle calls;
  upper-left ($M=32$) minimises rounds.}
\label{fig:pareto}
\end{figure}

\section{Noisy Oracles}\label{sec:noisy}

Real oracles---docking runs, assays, simulations---return $\varphi(x)$
corrupted by noise.  This section shows how the certification of an
$\varepsilon$-molecule, and its cost, change first under sub-Gaussian and
then under heavy-tailed noise, building on the evaluation-cost view above.

\subsection{Sub-Gaussian noise}
\label{sec:ctrl-noise}

Real oracles are \emph{noisy}: a query returns
$\tilde\varphi(x)=\varphi(x)+\xi$ with $\xi$ zero-mean,
$\sigma^2$-sub-Gaussian, independent across queries---the same noisy
oracle used in the numerical study of Section~\ref{sec:num-memory}.  Noise
undermines all three ingredients of Section~\ref{sec:ctrl-algo}: \emph{elitism}
on noisy scores may keep lucky molecules and drop good ones, so
$\ell(P_t)$ need no longer be monotone; \emph{memoisation} freezes a noisy
value; and \emph{stopping} fires falsely when
$\xi\ge\phist-\varepsilon-\varphi(x)$ pushes a sub-optimal molecule past
the threshold.

The remedy is to spend evaluations on \emph{certainty}.  Averaging $r$
queries, $\widehat\varphi_r(x)=\tfrac1r\sum_{i\le r}\tilde\varphi_i(x)$,
concentrates,
$\Prob(|\widehat\varphi_r(x)-\varphi(x)|\ge c)\le2e^{-rc^2/2\sigma^2}$,
with anytime radius $c_r(\delta)=\sigma\sqrt{2\ln(2/\delta)/r}$ and bounds
$\mathrm{LCB}_r=\widehat\varphi_r-c_r$,
$\mathrm{UCB}_r=\widehat\varphi_r+c_r$.  Each test is replaced by its
confident form: prefer $x$ to $x'$ only when
$\mathrm{LCB}(x)>\mathrm{UCB}(x')$; let the cache hold the running
$(\text{sum},\text{count})$ so re-querying \emph{refines} (memoisation
becomes \emph{accumulation}); and stop only when
$\mathrm{LCB}_r(x)\ge\phist-\varepsilon$.

\begin{proposition}[Replication to certify a gap]\label{prop:rep}
  To decide, with error probability $\le\delta$, whether $\varphi(x)$
  clears a threshold by a true margin $\gamma>0$ (or to order two
  molecules a true gap $\gamma$ apart), it suffices to average
  $r\ge 2\sigma^2\gamma^{-2}\ln(2/\delta)$ independent queries.
\end{proposition}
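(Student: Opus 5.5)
The plan is to reduce both decision problems to a single two-sided concentration event for the sample mean, and then pick $r$ so that the event has probability at least $1-\delta$. The starting point is the sub-Gaussian tail already recorded above: for $r$ independent queries with $\sigma^2$-sub-Gaussian, zero-mean noise, the average $\widehat\varphi_r(x)-\varphi(x)=\tfrac1r\sum_{i\le r}\xi_i$ is $(\sigma^2/r)$-sub-Gaussian. This follows by multiplying moment generating functions, $\E e^{\lambda\sum\xi_i/r}\le e^{\lambda^2\sigma^2/(2r)}$. The Chernoff bound then gives $\Prob(|\widehat\varphi_r(x)-\varphi(x)|\ge c)\le 2e^{-rc^2/(2\sigma^2)}$.

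\emph{Threshold case.} Suppose the true value is $\varphi(x)\ge\theta+\gamma$ or $\varphi(x)\le\theta-\gamma$. The decision rule compares $\widehat\varphi_r(x)$ with $\theta$. A wrong decision requires a deviation $|\widehat\varphi_r(x)-\varphi(x)|\ge\gamma$, so the error probability is at most $2e^{-r\gamma^2/(2\sigma^2)}$. Requiring this to be $\le\delta$ and solving for $r$ gives exactly $r\ge2\sigma^2\gamma^{-2}\ln(2/\delta)$. Equivalently, this is the condition $c_r(\delta)\le\gamma$ on the radius $c_r(\delta)=\sigma\sqrt{2\ln(2/\delta)/r}$. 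In the confidence-bound form, when the margin is positive the test $\mathrm{LCB}_r(x)\ge\theta$ fires on the good event, and it never fires falsely when the margin is negative. This needs the threshold to sit inside the margin band; I will state it as deciding between the two hypotheses separated by $\gamma$.

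\emph{Pairwise case.} For $x,x'$ with $\varphi(x)-\varphi(x')=\gamma$, I will use the difference of the two sample means, $\widehat\varphi_r(x)-\widehat\varphi_r(x')$. It is an average of $r$ independent differences $\xi_i-\xi_i'$, each $2\sigma^2$-sub-Gaussian. Ordering by the sign of this difference errs only if the averaged noise difference reaches $-\gamma$. A one-sided bound gives error at most $e^{-r\gamma^2/(4\sigma^2)}$.

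This constant is where I expect the main obstacle: taken literally, it does not match $2\sigma^2\gamma^{-2}\ln(2/\delta)$. I will first try to split the gap: it suffices that each mean lies within $\gamma/2$ of its true value. A union bound then needs $2\cdot2e^{-r\gamma^2/(8\sigma^2)}\le\delta$, which is worse by constants. I therefore expect the ordering claim to hold only up to an absolute constant factor, or with $\gamma$ read as the margin of each molecule relative to a separating threshold midway between them. That reading reduces the pairwise case to two instances of the threshold case at margin $\gamma$, with $\delta$ split between them. I will state the pairwise claim in that form, i.e.\ with a threshold lying a margin $\gamma$ from each value, and remark that the raw-gap version costs a constant factor (at most 4) in $r$. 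The threshold computation itself is routine once the sub-Gaussian tail is in hand.
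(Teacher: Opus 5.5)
Your threshold argument is the paper's proof. The paper only notes that $r\ge2\sigma^2\gamma^{-2}\ln(2/\delta)$ gives $c_r(\delta)\le\gamma$ and then invokes the two-sided sub-Gaussian tail, which is your computation. One-sided tails would even give $\ln(1/\delta)$.

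For the pairwise clause the paper writes only ``resp.\ orders the means'', and your suspicion is justified: with a raw gap $\gamma$, the stated $r$ does not suffice. Take Gaussian noise with the two values known up to labelling. The likelihood-ratio rule is then the sign of $\widehat\varphi_r(x)-\widehat\varphi_r(x')$, and this difference is $\mathcal N(\gamma,2\sigma^2/r)$. At $r=2\sigma^2\gamma^{-2}\ln(2/\delta)$, every rule therefore has worst-case error at least $\Prob\bigl(Z\ge\sqrt{\ln(2/\delta)}\bigr)$ with $Z\sim\mathcal N(0,1)$. This is $\approx0.0107$ at $\delta=0.01$, and its ratio to $\delta$ grows without bound as $\delta\to0$. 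Your difference bound $r\ge4\sigma^2\gamma^{-2}\ln(1/\delta)$ is within a factor $2$ of the stated one. The midpoint-threshold reading is the one under which the statement holds verbatim. So restricting the pairwise claim is the right fix; the gap sits in the paper's one-line argument, not in yours.

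Two small corrections.

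First, your confidence-bound remark is wrong at this $r$. On the good event you only get $\mathrm{LCB}_r(x)>\varphi(x)-2c_r(\delta)\ge\theta+\gamma-2c_r(\delta)$, which can lie below $\theta$ when $c_r(\delta)$ is close to $\gamma$. Guaranteed firing needs $c_r(\delta)\le\gamma/2$, i.e.\ four times as many queries. Only the ``never fires falsely'' half survives, so state the result for the rule that compares $\widehat\varphi_r(x)$ with $\theta$.

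Second, in the midpoint reading, splitting $\delta$ between two two-sided threshold tests gives $\ln(4/\delta)$, not $\ln(2/\delta)$. To get exactly the stated constant, note that a misordering forces $\widehat\varphi_r(x)\le\theta$ or $\widehat\varphi_r(x')\ge\theta$. Each is a one-sided deviation of size at least $\gamma$, with probability at most $e^{-r\gamma^2/(2\sigma^2)}$. The union bound $2e^{-r\gamma^2/(2\sigma^2)}\le\delta$ is then exactly $r\ge2\sigma^2\gamma^{-2}\ln(2/\delta)$.
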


\begin{proof}
  Then $c_r(\delta)=\sigma\sqrt{2\ln(2/\delta)/r}\le\gamma$, and the
  sub-Gaussian bound places $\varphi(x)$ on the correct side (resp.\ orders
  the means) except on an event of probability $\le\delta$.
\end{proof}

\begin{corollary}[Price of noise]\label{cor:priceofnoise}
  A confidence-$\delta$ robust strategy returns a true
  $\varepsilon$-molecule with probability $\ge1-\mathcal O(\delta)$ using
  \[
    \E[\evals_\varepsilon]\ \lesssim\ \sum_{k=k_0}^{K(\varepsilon)-1}\frac{r_k}{q_k}
    \ \approx\ 2\sigma^2\ln\tfrac2\delta
       \sum_{k=k_0}^{K(\varepsilon)-1}\frac1{\Delta_k^2\,q_k},
    \qquad r_k\approx\frac{2\sigma^2}{\Delta_k^2}\ln\tfrac2\delta,
  \]
  with $\Delta_k$ the fitness gap resolved at level $k$: the noiseless
  cost $\sum_k1/q_k$ (Corollary~\ref{cor:minbudget}) inflated, level by level, by
  $2\sigma^2\Delta_k^{-2}\ln(2/\delta)$.  Convergence weakens from
  almost-sure to high-probability.
\end{corollary}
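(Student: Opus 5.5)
The plan is to reduce the noisy run to the noiseless analysis of Corollary~\ref{cor:minbudget} on a high-probability ``good event'', and to account for replication as a multiplicative cost per decision. Fix a confidence parameter $\delta$ and let each confident comparison (admission to the pool, and the stopping test) at level $k$ use $r_k=\lceil 2\sigma^2\Delta_k^{-2}\ln(2/\delta')\rceil$ replicates, where $\Delta_k$ is the gap separating level $k$ from the escape set $\Akp$ (for the stopping test, the margin between $\phist-\varepsilon$ and the nearest non-target level). By Proposition~\ref{prop:rep}, each such decision is correct except with probability $\delta'$. Let $\mathcal G$ be the event that every decision made before stopping is correct.

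On $\mathcal G$ the dynamics is exactly the noiseless sequential ($M=1$) algorithm, with each scoring replaced by $r_k$ queries. Concretely, elitism holds with respect to the true $\varphi$, so Lemma~\ref{lem:monotone} applies and $\ell(P_t)$ is monotone. An escaping proposal in $\Akp$ is correctly recognised and admitted. Stopping fires only on a true $\varepsilon$-molecule. The layered argument of Lemma~\ref{lem:sojourn} with $M=1$ then bounds the expected number of proposals at level $k$ by $1/q_k$, each costing at most $r_k$ queries. Summing over levels gives $\sum_k r_k/q_k$; substituting $r_k$ gives the displayed approximation. The accumulating cache can only lower the count, since replicates already paid for are reused.

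The main obstacle is controlling $\Prob(\mathcal G^c)$, because the number of decisions is random and unbounded, so a naive union bound fails. My first choice is to split $\delta$ over decisions with a summable schedule, $\delta'_i=\delta/(2i^2)$ for the $i$-th decision, which costs only a $\ln i$ inflation inside $r_k$. Since $i$ is of the order of $\sum_k 1/q_k$ in expectation, this is absorbed into the ``$\lesssim$'' and ``$\approx$''. A union bound then gives $\Prob(\mathcal G^c)\le\sum_i\delta/(2i^2)=\mathcal O(\delta)$, which yields the probability-$1-\mathcal O(\delta)$ correctness claim. An alternative is to use the anytime radius $c_r(\delta)$ already introduced, which makes the bound uniform in $r$ per molecule.

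For the expectation, I would bound it either conditionally on $\mathcal G$, or on all runs by arguing that off $\mathcal G$ a wrong decision cannot stall the search forever. Uniform positivity (Assumption~\ref{ass:m-pos}) keeps proposing escaping molecules with probability $\ge\alpha$, so the contribution of $\mathcal G^c$ is $\mathcal O(\delta)$ times a finite geometric-tail bound as in Theorem~\ref{thm:base-conv}. Finally, the weakening from almost-sure to high-probability convergence is simply the statement that correctness is guaranteed only on $\mathcal G$.
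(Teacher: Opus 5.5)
Your core reduction is exactly how the paper arrives at the corollary: bound the number of sequential ($M=1$) proposals at level $k$ by $1/q_k$ as in Corollary~\ref{cor:minbudget}, and charge each of them the $r_k$ replicates of Proposition~\ref{prop:rep}. The paper offers no separate proof. Its $\ln(2/\delta)$ tacitly treats $\delta$ as the confidence of each individual decision, and the global $1-\mathcal O(\delta)$ is asserted rather than derived.

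The gap is in the layer you add to derive it. With $\delta'_i=\delta/(2i^2)$ the $i$-th decision needs $2\sigma^2\Delta^{-2}\ln(4i^2/\delta)$ replicates. Here $i$ runs up to the total number of proposals $T$, whose mean is controlled only by $D=\sum_k 1/q_k$. The resulting factor $\ln(4D^2/\delta)/\ln(2/\delta)$ is not a constant that $\lesssim$ or $\approx$ can hide. The $q_k$ are worst-case infima, and in the regime the paper targets $\ln D$ can be of order $\ln|\X|$; for uniform exploration on \textsc{OneMax} it is already linear in $n$.

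So for fixed $\delta$ your argument proves $\E[\evals_\varepsilon]\lesssim 2\sigma^2\ln(D^2/\delta)\sum_k\Delta_k^{-2}q_k^{-1}$, not the displayed bound. Even this needs more than $\E T\lesssim D$: the total replication is of order $\E[T\ln T]$, and $t\mapsto t\ln t$ is convex, so you also need a second-moment bound on $T$. That bound is available from the geometric sojourn tails of Lemma~\ref{lem:sojourn}.

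Your fallback does not rescue the constant either. Despite its name, $c_r(\delta)=\sigma\sqrt{2\ln(2/\delta)/r}$ is a fixed-$r$ Hoeffding radius, not a confidence sequence. For non-degenerate noise, the law of the iterated logarithm makes $|\widehat\varphi_r-\varphi(x)|$ exceed it for infinitely many $r$ almost surely. Even a genuine per-molecule confidence sequence would leave a union over the $\sim D$ molecules scored. You therefore have to choose between two readings:
\begin{itemize}
\item per-decision confidence with $\ln(2/\delta)$, which is the paper's reading, but for which the union bound gives only $1-\mathcal O(D\delta)$;
\item a global $1-\mathcal O(\delta)$ guarantee, paid for with $\ln(D/\delta)$.
\end{itemize}

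Two further steps also need repair.

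First, the expectation. The conditional escape bound of Lemma~\ref{lem:sojourn} is not available under $\Prob(\cdot\mid\mathcal G)$, because $\mathcal G$ depends on future noise. Instead, couple the noisy run with the noiseless chain driven by the same proposals. Then bound $\E[\evals_\varepsilon\mathbf{1}_{\mathcal G}]$ by the replicate-weighted noiseless count and divide by $\Prob(\mathcal G)$. Your unconditional alternative fails quantitatively. At $M=1$ the bound of Theorem~\ref{thm:base-conv} is $1/\beta=1/\alpha\ge|\X|$, so the $\mathcal G^c$ contribution $\mathcal O(\delta)\,r/\alpha$ dwarfs $\sum_k r_k/q_k$ whenever $D\ll\delta|\X|$, which is the case of interest. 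Handling it would need a renewal argument that restarts the layered bound after each error.

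Second, the definition of $\mathcal G$. ``Every decision is correct'' is not what Proposition~\ref{prop:rep} controls. A top-$N$ admission may compare molecules whose gap is smaller than $v_{k+1}-v_k$, or zero, so $\Prob(\mathcal G^c)\le\sum_i\delta'_i$ does not follow as stated. Take $\mathcal G$ to be the event that all confidence intervals hold, and derive from it only what the layered argument uses:
\begin{itemize}
\item an escaping proposal is admitted, which needs accuracy $(v_{k+1}-v_k)/2$;
\item the incumbent best is never evicted by a worse molecule, which needs accuracy $(v_k-v_{k-1})/2$.
\end{itemize}
Likewise, the stopping threshold must sit strictly between $v_{K-1}$ and $v_K$. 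Under the rule $\mathrm{LCB}\ge\phist-\varepsilon$, a target molecule lying exactly at $\phist-\varepsilon$ (as on \textsc{OneMax}) is certified with probability at most $\delta/2$. So even on $\mathcal G$ the noisy run does not stop where the noiseless one does.
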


\begin{remark}[The accuracy $\varepsilon$ enters twice under noise]\label{rem:eps}
  As for every layered bound, $\varepsilon$ is carried by the summation
  range $k_0,\dots,K(\varepsilon)-1$ (Remark~\ref{rem:eps-levels}):
  tightening $\varepsilon$ appends harder near-optimal levels, and
  $N+\sum_k1/q_k$ and $2\sigma^2\ln\tfrac2\delta\sum_k\Delta_k^{-2}/q_k$
  grow without bound as $\varepsilon\to0$ on hard landscapes.  Under noise
  $\varepsilon$ enters a \emph{second} time, through the certification
  margin: stopping on $\mathrm{LCB}_r(x)\ge\phist-\varepsilon$ resolves the
  gap $\Delta_{K-1}=\varphi(x)-(\phist-\varepsilon)$ of the returned
  molecule, which is the lattice spacing for a discrete fitness (so
  $\varepsilon$ acts only through $K(\varepsilon)$, as in our
  \textsc{OneMax} test) but as small as $\varepsilon$ in the worst
  continuous case, where by Proposition~\ref{prop:rep} the top level alone contributes
  $\sigma^2\varepsilon^{-2}\ln(2/\delta)$ to the replication cost.
\end{remark}

Charging every proposal the worst-case $r_k$ is wasteful---most are far
from the threshold and ranked correctly after a couple of queries.  A
\emph{sequential} test (query the incumbent until its bound crosses the
threshold or its budget is spent) concentrates replication on the few
near-threshold decisions, the robust analogue of $M=1$.

\begin{algorithm}[ht]
\caption{Robust EM-SGS (R-EM-SGS), noisy oracle}\label{alg:remsgs}
\begin{algorithmic}[1]
\Require $\varepsilon$; confidence $\delta$; pool $P_0$; depths $\{m_k^\star\}$; explore reps $r_0$; cap $r_{\max}$
\State cache $\cache$ holds $(\text{sum},\text{count})$ per molecule;\ query each of $P_0$ $\,r_0$ times
\Repeat
  \State $b\leftarrow\argmax_{x\in P_t}\widehat\varphi(x)$
  \While{$\widehat\varphi(b)\ge\phist-\varepsilon$ \textbf{and} $\mathrm{LCB}(b)<\phist-\varepsilon$ \textbf{and} $\mathrm{count}(b)<r_{\max}$}
    \State query $b$ once
  \EndWhile
  \If{$\mathrm{LCB}(b)\ge\phist-\varepsilon$}
    \Return $b$ \Comment{confident stop}
  \EndIf
  \State $k\leftarrow\ell(P_t)$;\ $G_t$ from last $m_k^\star$ pools;\ sample $y\sim G_t(\cdot\mid P_t)$;\ query $y$ $\,r_0$ times
  \State $P_{t+1}\leftarrow\topN(P_t\cup\{y\})$ by $\widehat\varphi$;\ refit $G_{t+1}$
\Until{stopped}
\end{algorithmic}
\end{algorithm}

\paragraph{Numerical illustration.}
Adding Gaussian noise $\xi\sim\mathcal N(0,\sigma^2)$ to the same oracle
(as in Section~\ref{sec:num-memory}; $n=30$, $N=10$, $m=1$, target
$\phist-\varepsilon=28$, $z=2$, $r_0=2$, $r_{\max}=60$; $150$ runs) gives
Table~\ref{tab:noise}.  The naive strategy of Section~\ref{sec:ctrl-algo} terminates
quickly but returns molecules of true fitness $27.4,25.3,22.9$---below the
target $28$---as $\sigma=1,2,3$: its success rate collapses
$0.46\!\to\!0.04\!\to\!0$.  Both robust variants are correct; fixed
$r=8$ pays $\sim\!10^3$ evaluations regardless, while the adaptive
\Cref{alg:remsgs} attains the same correctness at $273$--$594$ evaluations by
replicating only to certify a near-target incumbent.  The adaptive cost
grows with $\sigma$ in line with the $\sigma^2$ factor of
Corollary~\ref{cor:priceofnoise} (Figure~\ref{fig:priceofnoise}); at large $\sigma$ a
fixed $(\delta,r_{\max})$ no longer certifies every run, and maintaining
correctness demands more replication---the cost-for-certainty trade-off.

\begin{table}[ht]
\centering
\caption{Noisy \textsc{OneMax}: evaluations, success rate (fraction of runs
  returning a true $\varepsilon$-molecule), and mean true fitness of the
  returned molecule.}
\label{tab:noise}
\renewcommand{\arraystretch}{1.12}
\begin{tabular}{@{}llccc@{}}
\toprule
$\sigma$ & strategy & evals & success & returned $\varphi$ \\
\midrule
$1$ & naive (Section~\ref{sec:ctrl-algo}) & 107  & 0.46 & 27.4 \\
    & robust, fixed $r=8$          & 1037 & 1.00 & 29.0 \\
    & robust, adaptive (Alg.~3)    & 273  & 1.00 & 28.8 \\
\midrule
$2$ & naive                        & 105  & 0.04 & 25.3 \\
    & robust, fixed $r=8$          & 1210 & 1.00 & 29.0 \\
    & robust, adaptive (Alg.~3)    & 371  & 0.96 & 28.6 \\
\midrule
$3$ & naive                        & 102  & 0.00 & 22.9 \\
    & robust, fixed $r=8$          & 1558 & 0.99 & 28.9 \\
    & robust, adaptive (Alg.~3)    & 594  & 0.72 & 28.1 \\
\bottomrule
\end{tabular}
\end{table}

\begin{figure}[ht]
\centering
\begin{tikzpicture}
\begin{axis}[
    width=0.68\textwidth, height=0.4\textwidth,
    xlabel={noise level $\sigma$}, ylabel={evaluations (adaptive)},
    xmin=0.3, xmax=3.2, ymin=200, ymax=650,
    grid=both, major grid style={black!12}, tick align=outside, thick,
    legend pos=north west, legend cell align=left,
]
\addplot[domain=0.4:3.1, samples=60, dashed, gray, thick, forget plot]{242+39*x^2};
\addplot[violet, mark=*, mark size=1.5pt] coordinates {
(0.5,252)(1,273)(1.5,314)(2,371)(2.5,462)(3,594)};
\addlegendentry{adaptive (Alg.~3)}
\addplot[gray,only marks,mark=none] coordinates {(3.05,594)};
\addlegendentry{$\propto\sigma^2$ guide}
\end{axis}
\end{tikzpicture}
\caption{Price of noise: evaluations of the adaptive robust strategy grow
  with $\sigma$ as the $\sigma^2$ replication factor of
  Corollary~\ref{cor:priceofnoise} predicts (dashed guide $242+39\,\sigma^2$).
  Success rates $1.00,1.00,0.99,0.96,0.89,0.72$ for $\sigma=0.5,\dots,3$.}
\label{fig:priceofnoise}
\end{figure}

In one line: replace single scoring by \emph{replicated} scoring with
confidence bounds, memoisation by \emph{accumulation}, hard stopping by
$\mathrm{LCB}\ge\phist-\varepsilon$, and allocate repeats \emph{adaptively}
(\Cref{alg:remsgs}); the budget result $M=1$ is unchanged, and the cost gains the
price-of-noise factor of Corollary~\ref{cor:priceofnoise}.

\subsection{Heavy-tailed noise}
\label{sec:ctrl-heavy}

The confidence machinery of Section~\ref{sec:ctrl-noise}---and the price
of noise (Corollary~\ref{cor:priceofnoise})---rests on a \emph{sub-Gaussian}
tail, which lets the sample mean $\widehat\varphi_r$ concentrate
exponentially.  Real scores (docking artefacts, assay spikes) can be
heavier-tailed, and then the sample mean is the wrong tool: a single
outlier shifts it arbitrarily, the Hoeffding radius is mis-calibrated, and
certification either stops falsely or never stops.  Two substitutions
repair this and a third makes it automatic.

\paragraph{Robust location and distribution-free decisions.}
Replace the per-molecule sample mean by a heavy-tail-robust estimator---the
\emph{median-of-means} (split the $r$ queries into
$b=\lceil8\ln(2/\delta)\rceil$ blocks and take the median of the block
means) or a trimmed mean / Catoni M-estimator~\cite{Catoni2012,LugosiMendelson2019};
these regain sub-Gaussian-rate radii assuming only a finite variance.  When
even the variance is infinite, a sign/rank test makes the decisions
\emph{distribution-free}: to certify $\varphi(x)\ge T$ under symmetric
noise, count $c=\#\{\text{queries}>T\}$ and stop once the binomial lower
bound on $p=\Prob(\tilde\varphi(x)>T)$ clears $\tfrac12$; this needs no
moments at all and tolerates an infinite mean.

\begin{proposition}[Replication across noise regimes]\label{prop:heavy}
  To certify a true margin $\gamma>0$ with error $\le\delta$, the number of
  queries $r$ required scales as
  \[
  \begin{array}{ll}
   \text{(a) sub-Gaussian (sample mean):}
     & r\ \gtrsim\ \sigma^2\gamma^{-2}\ln(2/\delta)\quad(\text{Proposition~\ref{prop:rep}});\\[3pt]
   \text{(b) finite variance }\sigma^2\text{ (median-of-means):}
     & r\ \gtrsim\ \sigma^2\gamma^{-2}\ln(2/\delta)\quad(\text{larger constant});\\[3pt]
   \text{(c) finite }(1{+}\alpha)\text{-moment }\nu,\ 0<\alpha<1:
     & r\ \gtrsim\ \bigl(\nu\,\gamma^{-(1+\alpha)}\bigr)^{1/\alpha}\ln(2/\delta);\\[3pt]
   \text{(d) symmetric, sign test (no moments):}
     & r\ \gtrsim\ \dfrac{\ln(2/\delta)}{2\,(p_\gamma-\tfrac12)^2},\quad p_\gamma=\Prob(\xi<\gamma).
  \end{array}
  \]
\end{proposition}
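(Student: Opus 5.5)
The proof treats the four regimes separately, since each is a standard concentration statement for a different estimator. In every case we reduce certification to the same event: the estimate $\widehat\varphi$ of $\varphi(x)$ (or of a threshold comparison) deviates from its target by less than $\gamma$, with probability at least $1-\delta$. We then read off the smallest $r$ for which the estimator's confidence radius at level $\delta$ drops below $\gamma$. This is exactly the logic of Proposition~\ref{prop:rep}. A margin of $\gamma$ means the decision is correct whenever $|\widehat\varphi-\varphi(x)|<\gamma$. For pairwise ordering we apply the bound with $\gamma/2$ to each molecule and take a union bound, which changes only constants.

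Part (a) is Proposition~\ref{prop:rep} verbatim. For (b) we use median-of-means with $b=\lceil 8\ln(2/\delta)\rceil$ blocks of size $r/b$. Chebyshev gives each block mean a deviation above $2\sigma/\sqrt{r/b}$ with probability at most $1/4$. The median fails only if at least $b/2$ blocks fail, and a Hoeffding bound on the binomial count of failing blocks makes this $\le e^{-b/8}\le\delta/2$. The resulting radius is $c\,\sigma\sqrt{\ln(2/\delta)/r}$ with an absolute constant $c$, and setting it $\le\gamma$ gives $r\gtrsim\sigma^2\gamma^{-2}\ln(2/\delta)$ with a larger constant than in (a). For (c) we run the same block argument, replacing Chebyshev by the $(1{+}\alpha)$-moment bound for a block mean of size $n$. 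This bound, $\Prob(|\bar\xi_n|\ge t)\le C\nu\, n^{-\alpha}t^{-(1+\alpha)}$ (von Bahr–Esseen type; Lugosi–Mendelson / Bubeck et al.), makes each block fail with probability $\le1/4$ once $t\asymp(\nu/n^{\alpha})^{1/(1+\alpha)}$. The median-of-means radius is then $(\nu)^{1/(1+\alpha)}(\ln(2/\delta)/r)^{\alpha/(1+\alpha)}$ up to constants. Solving $\text{radius}\le\gamma$ gives $r\gtrsim \nu^{1/\alpha}\gamma^{-(1+\alpha)/\alpha}\ln(2/\delta)=(\nu\gamma^{-(1+\alpha)})^{1/\alpha}\ln(2/\delta)$.

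For (d) we fix the threshold $T$ and assume $\varphi(x)\ge T+\gamma$. Each indicator $\mathbf 1\{\tilde\varphi_i(x)>T\}$ is Bernoulli with parameter $p=\Prob(\xi>T-\varphi(x))\ge\Prob(\xi>-\gamma)=\Prob(\xi<\gamma)=p_\gamma>\tfrac12$, using symmetry. Conversely, if $\varphi(x)\le T$ then $p\le\tfrac12$. The test compares $c/r$ with $\tfrac12$, and Hoeffding for Bernoulli variables bounds the probability that $c/r$ falls on the wrong side of $\tfrac12$ by $e^{-2r(p_\gamma-1/2)^2}$. Taking the two-sided version to get $\ln(2/\delta)$ gives $r\ge\ln(2/\delta)/(2(p_\gamma-\tfrac12)^2)$. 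No moment assumption enters, only symmetry and continuity at $0$.

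The main obstacle is (c). The constants in the $(1{+}\alpha)$-moment inequality for sums of independent centred variables need care; for $1<1+\alpha\le 2$ the von Bahr–Esseen inequality gives $\E|\sum\xi_i|^{1+\alpha}\le 2n\nu$. We must also check that the exponent bookkeeping indeed produces $(\nu\gamma^{-(1+\alpha)})^{1/\alpha}$. Since the statement is only up to $\gtrsim$, it suffices to track exponents and absorb all constants, and we cite \cite{LugosiMendelson2019} for the block-mean tail rather than rederive it.
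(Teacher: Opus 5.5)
Your four rates are right, and (a) and the detection half of (d) follow the paper, but in (b)--(c) you take a different route. The paper's sketch cites Lugosi--Mendelson for two bounds and inverts them: the median-of-means radius $\sigma\sqrt{32\ln(2/\delta)/r}$ for (b), and, for (c), a \emph{trimmed mean} tuned to $\delta$ with radius $\nu^{1/(1+\alpha)}(\ln(2/\delta)/r)^{\alpha/(1+\alpha)}$. You derive (b) from Chebyshev per block plus a Hoeffding bound on the number of bad blocks. This recovers essentially the paper's constant, since $2\sigma\sqrt{b/r}$ with $b\approx 8\ln(2/\delta)$ is $\sigma\sqrt{32\ln(2/\delta)/r}$. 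For (c) you keep the same median-of-means estimator and only replace Chebyshev by von Bahr--Esseen plus Markov, $\Prob(|\bar\xi_n|\ge t)\le 2\nu n^{-\alpha}t^{-(1+\alpha)}$. With $n=r/b$ this reproduces the same radius, and your inversion to $(\nu\gamma^{-(1+\alpha)})^{1/\alpha}\ln(2/\delta)$ is correct. Your version is self-contained and shows that one estimator covers both (b) and (c), with only the per-block tail inequality changing. The paper's version is shorter but rests entirely on the citation.

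One step in (d) fails as written. For the false-certification side you assume only $\varphi(x)\le T$, so $p\le\tfrac12$. You then claim Hoeffding bounds the chance that $c/r$ lands on the wrong side of $\tfrac12$ by $e^{-2r(p_\gamma-1/2)^2}$. In that case Hoeffding actually gives $e^{-2r(1/2-p)^2}$, which is not small when $p$ is near $\tfrac12$. Concretely, at $\varphi(x)=T$ (using your continuity at $0$) one has $p=\tfrac12$, and the rule ``certify iff $c/r>\tfrac12$'' fires with probability close to $\tfrac12$ for every large $r$.

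There are two fixes, both harmless under $\gtrsim$:
\begin{itemize}
\item Use the margin on both sides, as you implicitly do in (a)--(c): assume $\varphi(x)\le T-\gamma$. Then $p\le\Prob(\xi>\gamma)\le 1-p_\gamma$, so the same exponent controls both error types.
\item Protect against every $\varphi(x)\le T$, as the paper's test does; it stops only when a binomial lower confidence bound on $p$ clears $\tfrac12$. This amounts to raising the cutoff to $\tfrac12+\sqrt{\ln(1/\delta)/(2r)}$, which costs roughly a factor $4$ in $r$.
\end{itemize}
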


\begin{proof}[Proof sketch]
  (a)~is Proposition~\ref{prop:rep}.  (b)~Median-of-means with
  $b=\lceil8\ln(2/\delta)\rceil$ blocks deviates by at most
  $\sigma\sqrt{32\ln(2/\delta)/r}$ with probability $\ge1-\delta$ under a
  finite variance~\cite{LugosiMendelson2019}; set this $\le\gamma$.
  (c)~A trimmed mean tuned to $\delta$ deviates by
  $\lesssim\nu^{1/(1+\alpha)}(\ln(2/\delta)/r)^{\alpha/(1+\alpha)}$ under a
  finite $(1{+}\alpha)$-moment~\cite{LugosiMendelson2019}; invert.
  (d)~The count $c$ is $\mathrm{Binomial}(r,p_\gamma)$ with
  $p_\gamma=\Prob(\xi>-\gamma)=\Prob(\xi<\gamma)$ by symmetry, and
  Hoeffding's inequality certifies $p_\gamma>\tfrac12$ once
  $r\ge\ln(2/\delta)/\bigl(2(p_\gamma-\tfrac12)^2\bigr)$.
\end{proof}

\begin{remark}[Price of heavy tails, and what to change]\label{rem:heavy}
  The price-of-noise factor $2\sigma^2\Delta_k^{-2}\ln(2/\delta)$ of
  Corollary~\ref{cor:priceofnoise} survives a heavy but finite-variance tail
  with only a worse constant (case (b)); a genuinely infinite variance
  degrades the exponent (case (c)); and the distribution-free route (d)
  trades the fitness gap $\Delta_k$ for the probability gap
  $p_{\Delta_k}-\tfrac12$, costly when the noise density at the threshold
  is low but valid for any tail.  Operationally, three changes suffice:
  estimate a \emph{robust} scale (MAD or interquartile range) in place of
  the sample variance when forming $\widehat\sigma$; \emph{diagnose} the
  tail online---e.g.\ a ratio $\mathrm{SD}/(1.48\,\mathrm{MAD})\gg1$ flags
  a heavy tail---and switch from the cheap mean to a robust estimator or
  the sign test when it is heavy; and, when the noise is \emph{asymmetric}
  (so neither mean nor median equals $\varphi$), define the design objective
  as a fixed quantile of the score distribution, with the levels
  $1,\dots,K(\varepsilon)$ and escape probabilities $q_k$ taken relative to
  that quantile.  These prescriptions are tested numerically below.
\end{remark}

\paragraph{Numerical test.}
We stress the certifier on the noisy \textsc{OneMax} oracle of
Section~\ref{sec:num-memory}, replacing the Gaussian noise by a Student-$t$
variate of $\nu$ degrees of freedom, each scaled to a common interquartile
range (that of $\mathcal N(0,2)$) so that the bulk is fixed and only the
\emph{tail} varies (Figure~\ref{fig:ht-density}); $\nu$ ranges from $\infty$
(Gaussian) through $\nu=2$ (variance just infinite) to $\nu=1$ (Cauchy,
infinite mean).  Holding the controller of Section~\ref{sec:ctrl-algo} fixed
($\varepsilon=2$, $\delta=0.1$, $r_0=2$; $150$ runs), we change only the
location estimate and the certification rule: the sub-Gaussian sample mean,
a median-of-means estimate with a robust (MAD) scale radius, and the
distribution-free sign test of Proposition~\ref{prop:heavy}(d).

\begin{figure}[ht]
\centering
\begin{tikzpicture}
\begin{axis}[
    width=0.66\textwidth, height=0.4\textwidth,
    xlabel={noise $\xi$}, ylabel={density (log scale)}, ymode=log,
    xmin=-7, xmax=7, ymin=3e-4, ymax=0.32,
    grid=both, major grid style={black!12}, minor grid style={black!6},
    tick align=outside, thick, legend pos=north east, legend cell align=left,
]
\addplot[black, thick] coordinates {
(-7.0,0.0004)(-6.6,0.0009)(-6.2,0.0016)(-5.8,0.0030)(-5.4,0.0052)(-5.0,0.0088)(-4.6,0.0142)(-4.2,0.0220)(-3.8,0.0328)(-3.4,0.0470)(-3.0,0.0648)(-2.6,0.0857)(-2.2,0.1089)(-1.8,0.1330)(-1.4,0.1561)(-1.0,0.1760)(-0.6,0.1907)(-0.2,0.1985)(0.0,0.1995)(0.2,0.1985)(0.6,0.1907)(1.0,0.1760)(1.4,0.1561)(1.8,0.1330)(2.2,0.1089)(2.6,0.0857)(3.0,0.0648)(3.4,0.0470)(3.8,0.0328)(4.2,0.0220)(4.6,0.0142)(5.0,0.0088)(5.4,0.0052)(5.8,0.0030)(6.2,0.0016)(6.6,0.0009)(7.0,0.0004)};
\addlegendentry{$\nu=\infty$ (Gaussian)}
\addplot[teal, thick] coordinates {
(-7.0,0.0053)(-6.6,0.0065)(-6.2,0.0080)(-5.8,0.0098)(-5.4,0.0122)(-5.0,0.0154)(-4.6,0.0195)(-4.2,0.0249)(-3.8,0.0321)(-3.4,0.0416)(-3.0,0.0540)(-2.6,0.0701)(-2.2,0.0904)(-1.8,0.1148)(-1.4,0.1423)(-1.0,0.1700)(-0.6,0.1932)(-0.2,0.2066)(0.0,0.2084)(0.2,0.2066)(0.6,0.1932)(1.0,0.1700)(1.4,0.1423)(1.8,0.1148)(2.2,0.0904)(2.6,0.0701)(3.0,0.0540)(3.4,0.0416)(3.8,0.0321)(4.2,0.0249)(4.6,0.0195)(5.0,0.0154)(5.4,0.0122)(5.8,0.0098)(6.2,0.0080)(6.6,0.0065)(7.0,0.0053)};
\addlegendentry{$\nu=3$}
\addplot[orange, thick] coordinates {
(-7.0,0.0077)(-6.6,0.0088)(-6.2,0.0102)(-5.8,0.0118)(-5.4,0.0139)(-5.0,0.0165)(-4.6,0.0197)(-4.2,0.0239)(-3.8,0.0293)(-3.4,0.0364)(-3.0,0.0459)(-2.6,0.0586)(-2.2,0.0757)(-1.8,0.0986)(-1.4,0.1278)(-1.0,0.1621)(-0.6,0.1956)(-0.2,0.2174)(0.0,0.2204)(0.2,0.2174)(0.6,0.1956)(1.0,0.1621)(1.4,0.1278)(1.8,0.0986)(2.2,0.0757)(2.6,0.0586)(3.0,0.0459)(3.4,0.0364)(3.8,0.0293)(4.2,0.0239)(4.6,0.0197)(5.0,0.0165)(5.4,0.0139)(5.8,0.0118)(6.2,0.0102)(6.6,0.0088)(7.0,0.0077)};
\addlegendentry{$\nu=1.5$}
\addplot[red, thick] coordinates {
(-7.0,0.0084)(-6.6,0.0095)(-6.2,0.0107)(-5.8,0.0121)(-5.4,0.0139)(-5.0,0.0160)(-4.6,0.0187)(-4.2,0.0221)(-3.8,0.0264)(-3.4,0.0321)(-3.0,0.0397)(-2.6,0.0500)(-2.2,0.0645)(-1.8,0.0849)(-1.4,0.1136)(-1.0,0.1523)(-0.6,0.1970)(-0.2,0.2309)(0.0,0.2360)(0.2,0.2309)(0.6,0.1970)(1.0,0.1523)(1.4,0.1136)(1.8,0.0849)(2.2,0.0645)(2.6,0.0500)(3.0,0.0397)(3.4,0.0321)(3.8,0.0264)(4.2,0.0221)(4.6,0.0187)(5.0,0.0160)(5.4,0.0139)(5.8,0.0121)(6.2,0.0107)(6.6,0.0095)(7.0,0.0084)};
\addlegendentry{$\nu=1$ (Cauchy)}
\end{axis}
\end{tikzpicture}
\caption{Student-$t$ noise densities used in the test, scaled to a common
  interquartile range (that of $\mathcal N(0,2)$).  On the logarithmic axis
  the Gaussian tail plunges quadratically while the heavier-tailed densities
  decay only polynomially---at $\xi=\pm7$ the Cauchy density is about
  $20\times$ the Gaussian.  Only the tail (and the slightly taller
  leptokurtic peak) differs.}
\label{fig:ht-density}
\end{figure}

Table~\ref{tab:ht} reports the outcome.  The mean-based certifier's success
rate---the fraction of runs that truly return an $\varepsilon$-molecule---
falls from $0.95$ under Gaussian noise to $0$ at Cauchy: an outlier inflates
a molecule's mean and the variance radius is mis-calibrated, so it stops on
noise rather than fitness.  A robust \emph{point} estimate does not rescue
it---median-of-means with a robust-scale radius \emph{under-covers},
ignoring the very outliers that drive the mean's uncertainty, and is no
better.  Only the sign test, certifying the median crossing through a
binomial bound, stays correct across every tail (success $\ge0.98$ including
Cauchy), realising Proposition~\ref{prop:heavy}(d).  Its price is exactly the
replication that proposition predicts: the queries grow from $594$
(Gaussian) to $3368$ (Cauchy) as the probability gap $p_\Delta-\tfrac12$ at
the threshold shrinks (Figure~\ref{fig:ht-succ})---the heavy-tailed face of
the price of noise.

\begin{table}[ht]
\centering
\caption{Evaluations and success (in parentheses) under Student-$t$ noise
  on \textsc{OneMax} ($\varepsilon{=}2$, $150$ runs).  Mean and
  median-of-means certification collapse as the tail fattens; the
  distribution-free sign test stays correct, at a query cost that grows with
  the tail.}
\label{tab:ht}
\renewcommand{\arraystretch}{1.12}
\begin{tabular}{@{}lccc@{}}
\toprule
tail $\nu$ & sample mean & median-of-means & sign test \\
\midrule
$\infty$ (Gaussian) & 465\,(0.95) & 412\,(0.95) & 594\,(\textbf{0.99}) \\
$4$                 & 486\,(0.63) & 390\,(0.56) & 698\,(\textbf{0.99}) \\
$3$                 & 463\,(0.43) & 336\,(0.30) & 727\,(\textbf{0.99}) \\
$2$ (var.\ $\infty$)& 313\,(0.19) & 209\,(0.11) & 921\,(\textbf{0.99}) \\
$1.5$               & 382\,(0.05) & 140\,(0.03) & 1552\,(\textbf{0.99}) \\
$1$ (Cauchy)        & 160\,(0.00) & 63\,(0.00)  & 3368\,(\textbf{0.98}) \\
\bottomrule
\end{tabular}
\end{table}

\begin{figure}[ht]
\centering
\begin{tikzpicture}
\begin{axis}[
    width=0.66\textwidth, height=0.38\textwidth,
    xlabel={tail heaviness $1/\nu$\ \ (0 = Gaussian, $0.5$ = infinite variance, $1$ = Cauchy)},
    ylabel={success rate}, xmin=-0.05, xmax=1.05, ymin=-0.05, ymax=1.08,
    grid=both, major grid style={black!12}, tick align=outside, thick,
    legend pos=south west, legend cell align=left,
]
\addplot[red, mark=o, mark size=1.5pt, densely dashed] coordinates {
(0,0.95)(0.25,0.63)(0.333,0.43)(0.5,0.19)(0.667,0.05)(1,0.00)};
\addlegendentry{sample mean}
\addplot[orange, mark=square, mark size=1.5pt, densely dashed] coordinates {
(0,0.95)(0.25,0.56)(0.333,0.30)(0.5,0.11)(0.667,0.03)(1,0.00)};
\addlegendentry{median-of-means}
\addplot[blue, mark=*, mark size=1.5pt] coordinates {
(0,0.99)(0.25,0.99)(0.333,0.99)(0.5,0.99)(0.667,0.99)(1,0.98)};
\addlegendentry{sign test (distribution-free)}
\addplot[black, dashed, thin, forget plot] coordinates {(0.5,-0.05)(0.5,1.08)};
\end{axis}
\end{tikzpicture}
\caption{Success rate versus tail heaviness on noisy \textsc{OneMax}.  Mean
  and median-of-means certification collapse beyond a mild tail; the
  distribution-free sign test holds $\ge0.98$ throughout, including the
  infinite-mean Cauchy case.  The vertical line marks $\nu=2$ (infinite
  variance).}
\label{fig:ht-succ}
\end{figure}

\section{Concluding Remarks}
\label{sec:concl}

We have given a self-contained convergence, hitting-time, and
evaluation-cost theory for closed-loop generative selection, organised
around a single dichotomy: results that hold for any oracle, and results
that quantify what noise costs.  In the first group the layered
hitting-time bound, its memory-dependent refinements and the
non-monotone exit-time theory, and the reduction of the proposal budget to
its evaluation-optimal corner $M=1$ form a coherent picture in which the
landscape difficulty $\sum_k 1/q_k$ is the fundamental quantity.  In the
second group, sub-Gaussian noise inflates this difficulty by a
price-of-noise factor $\sigma^2/\Delta^2$, and heavy tails replace the
sample mean by robust or distribution-free certifiers whose cost degrades
gracefully with the tail.

The theory developed here is deliberately abstract: it does not depend on
the specific form of the generative model, the fitness landscape, or the
noise distribution beyond broad qualitative assumptions (uniform
positivity, sub-Gaussian or finite-variance tails).  This abstraction is a
strength---the bounds apply to any instantiation of generative
selection---but also a limitation, because the tightness of every bound
hinges on the escape probabilities $q_k$ and their memory-dependent
refinements $q_k^m$, which are algorithm-specific quantities.  Three
problems stand out as particularly pressing.

\paragraph{1. Quantifying and maximising the escape probabilities.}
Every bound in this paper is only as good as the lower bounds on the escape
probabilities $q_k$ and their memory-dependent versions $q_k^m$.  Deriving
$q_k^m$ from the generative model---as a function of the pool size, the
number of fine-tuning steps, the molecular representation, and the geometry
of the chemical-similarity graph---and proving matching lower bounds that
capture the benefit of memory, in the spirit of Sudholt~\cite{Sudholt2013},
is the central quantitative challenge.  A sharp answer would turn the
qualitative guarantees here into predictive, algorithm-specific running
times.  We suspect that for neural generative models (e.g.\ variational
autoencoders, normalising flows, or diffusion models trained on molecular
graphs), the escape probabilities are governed by the overlap between the
model's latent representation and the fitness-relevant degrees of freedom,
but a rigorous characterisation remains open.

\paragraph{2. Memory, monotonicity, and composition-homogeneity.}
The memory hierarchy holds under the monotone-learning condition, and the
non-monotone theory locates an optimal depth at the first peak of the
learning profile; but our numerical study shows that a naive windowed
learner is only approximately composition-homogeneous, so the sharp
per-level results and the promised gain of level-adaptive memory do not
fully materialise.  Designing a learner that is at once strongly
non-monotone and composition-homogeneous---so that adaptive depth pays
off---and characterising the optimal depth and the three-way trade-off
among pool size, proposal budget, and memory remain open.  One promising
direction is to replace the sliding-window average with a
forgetting-factor or kernel-weighted estimate that downweights older pools
smoothly, which may restore composition-homogeneity while preserving the
bias--variance trade-off that makes finite memory beneficial.

\paragraph{3. Robust certification under realistic noise and in continuous
spaces.}
Our heavy-tailed treatment assumes symmetric noise and a finite chemical
space.  Several extensions are needed for practical drug design.
Anytime-valid sequential tests would tighten the query cost and remove the
manual tuning of confidence parameters.  Asymmetric heavy tails call for a
quantile objective, against which the levels and escape probabilities must
be redefined.  Continuous latent representations $\X\subseteq\R^p$ turn the
memory-dependent kernel into a history-dependent Markov kernel on an
uncountable state space, to which the stability theory of Meyn and
Tweedie~\cite{MeynTweedie1993} applies only after substantial adaptation to
learned kernels.  Together these would extend the present theory to the
oracles and spaces of practical drug design, where the search space is
continuous (e.g.\ latent codes of a molecular autoencoder) and the oracle
is a noisy multi-objective assay with unknown tail behaviour.

Beyond these specific open problems, we believe the broader contribution of
this paper is a conceptual framework for analysing adaptive generative
loops that separates the learning dynamics (memory, escape probabilities)
from the search dynamics (elitism, hitting times) and identifies the
landscape difficulty $\sum_k 1/q_k$ as the fundamental quantity that
governs performance.  We hope this framework will serve as a foundation for
tighter, algorithm-specific analyses and for the principled design of
generative-selection algorithms in drug discovery and beyond.

\appendix

\section{Table of Notation}
\label{app:notation}

Table~\ref{tab:notation} collects the symbols used throughout the paper,
grouped by role.  Page-level cross-references point to the defining
equation, definition, or assumption.

\renewcommand{\arraystretch}{1.25}
\begin{small}
\begin{longtable}{@{}p{0.20\textwidth} p{0.62\textwidth} p{0.12\textwidth}@{}}
\caption{Summary of notation.}\label{tab:notation}\\
\toprule
\textbf{Symbol} & \textbf{Meaning} & \textbf{Ref.}\\
\midrule
\endfirsthead
\multicolumn{3}{@{}l}{\small\itshape Table~\ref{tab:notation} (continued)}\\
\toprule
\textbf{Symbol} & \textbf{Meaning} & \textbf{Ref.}\\
\midrule
\endhead
\midrule
\multicolumn{3}{r@{}}{\small\itshape continued on next page}\\
\endfoot
\bottomrule
\endlastfoot

\multicolumn{3}{@{}l}{\textit{Spaces and fitness}}\\
$\X$ & Finite chemical (search) space; $|\X|$ may exceed $10^{12}$ & \S\ref{sec:framework}\\
$\varphi$ & Scalar fitness functional $\varphi\colon\X\to\R$ (higher is better) & \S\ref{sec:framework}\\
$\phist$ & Global optimum $\phist=\max_{x\in\X}\varphi(x)$ & \S\ref{sec:framework}\\
$\varepsilon$ & Optimality tolerance & \S\ref{sec:framework}\\
$\Neps$ & Target set $\{x:\varphi(x)\ge\phist-\varepsilon\}$ & \S\ref{sec:framework}\\
$N$ & Elite pool size (number of retained molecules) & \S\ref{sec:framework}\\
$\Omn$ & Population space, the $N$-element subsets of $\X$ & \S\ref{sec:framework}\\
$P,P_t$ & A population (elite pool); the pool at round $t$ & Alg.~1\\
$f(P)$ & Elite fitness $f(P)=\max_{x\in P}\varphi(x)$ & \S\ref{sec:framework}\\
$M$ & Proposal budget (candidates generated per round) & Alg.~1\\
$C_t$ & Set of $M$ candidates proposed at round $t$ & Alg.~1\\

\midrule
\multicolumn{3}{@{}l}{\textit{Fitness levels and absorbing structure}}\\
$v_1<\dots<v_L$ & Distinct fitness values on $\X$ & \Cref{def:levels}\\
$L_j$ & Level set $\{x:\varphi(x)=v_j\}$ & \Cref{def:levels}\\
$K$ & Index of the first level meeting the $\varepsilon$-target & \Cref{def:levels}\\
$f_k$ & Fitness value at level $k$, $f_k=v_k$ ($k<K$) & \Cref{def:levels}\\
$\Akp$ & Escape set $\{y:\varphi(y)>f_k\}=\bigcup_{j>k}L_j$ & \Cref{def:levels}\\
$\ell(P)$ & Level of population $P$ (rank of best molecule, capped at $K$) & \Cref{def:levels}\\
$\Oms$ & Absorbing set $\{P:\ell(P)=K\}$ & \Cref{def:absorbing}\\
$\Omt$ & Transient set $\Omn\setminus\Oms$ & \Cref{def:absorbing}\\
$\tau_\varepsilon$ & Hitting time $\inf\{t:P_t\in\Oms\}$ & \Cref{def:absorbing}\\
$\tau_k$ & First time the level reaches $k$, $\inf\{t:\ell(P_t)\ge k\}$ (so $\tau_\varepsilon=\tau_{K(\varepsilon)}$) & \Cref{lem:sojourn}\\

\midrule
\multicolumn{3}{@{}l}{\textit{Generative kernels and Markov structure}}\\
$G_t$ & Generative model / transition kernel at round $t$ & Alg.~1\\
$\G$ & Space of probability kernels on $\X$ & \Cref{def:augmented}\\
$U$ & Uniform distribution on $\X$ (non-adaptive baseline) & \Cref{ass:positivity}\\
$\alpha$ & Uniform-positivity floor, $G_t(x\mid\cdot)\ge\alpha$ & \Cref{ass:positivity}\\
$\beta$ & Per-round absorption probability $1-(1-\alpha)^M$ & \Cref{thm:base-conv}\\
$Z_t$ & Augmented state $(P_t,G_t)$ & \Cref{def:augmented}\\
$\mathbf{H}_t$ & History process $(P_0,\dots,P_t)$ & \Cref{def:full}\\
$S_\infty$ & History state space $\bigcup_{n\ge1}\Omn^n$ & \Cref{def:full}\\
$F$ & Full-memory map $\bigcup_t\Omn^{t+1}\to\G$ & \Cref{def:full}\\
$G(\cdot)$ & $m$-step memory map $\Omn^m\to\G$ & \Cref{def:m-kernel}\\
$S_t^m$ & $m$-step window $(P_{t-m+1},\dots,P_t)$ & \Cref{def:m-kernel}\\
$m$ & Memory depth (number of past pools used by $G_t$) & \Cref{def:m-kernel}\\

\midrule
\multicolumn{3}{@{}l}{\textit{Escape probabilities and hitting-time bounds}}\\
$q_k$ & Single-proposal escape probability at level $k$ (static) & \Cref{def:qk}\\
$p_k$ & Population escape probability $1-(1-q_k)^M$ & \Cref{def:qk}\\
$q_k^0,p_k^0$ & Baseline (uniform $G_t\equiv U$): $q_k^0=\alpha|\Akp|$ & \Cref{cor:chain}\\
$q_k^\infty,p_k^\infty$ & Full-memory escape probabilities & \Cref{def:full-qk}\\
$q_k^m,p_k^m$ & $m$-step escape probabilities & \Cref{def:m-qk}\\
$D,\,D^m$ & Landscape difficulty $\sum_k 1/q_k$; $m$-step version $\sum_k 1/q_k^m$ & \Cref{def:difficulty}\\

\midrule
\multicolumn{3}{@{}l}{\textit{Learning, regret, and the saturation model}}\\
$q_k^{(s)}$ & Escape probability at the $s$-th round spent at level $k$ & \S\ref{sec:full}\\
$r_s$ & Per-round relative regret, $q_k^{(s)}\ge q_k^\infty(1-r_s)$ & \Cref{thm:online}\\
$R_\infty$ & Cumulative regret $\sum_{s\ge0}r_s$ & \Cref{thm:online}\\
$c_k$ & Learning rate in the saturation model & \Cref{def:sat}\\
$A_k(m)$ & Transient sojourn term $\sum_{s<m}\prod_{j<s}(1-p_k^j)$ & \Cref{prop:sojourn-sat}\\
$B_k(m)$ & Tail weight $\prod_{j<m}(1-p_k^j)$ & \Cref{prop:sojourn-sat}\\
$\Delta_k(m)$ & Sojourn saving from $m$ steps of memory & \Cref{prop:sojourn-sat}\\

\midrule
\multicolumn{3}{@{}l}{\textit{Multi-objective extension}}\\
$\Phi$ & Vector objective $\Phi=(\Phi_1,\dots,\Phi_d)\colon\X\to\R^d$ & \S\ref{sec:multiobjective}\\
$\succeq$ & Componentwise (Pareto) dominance on $\R^d$ & \S\ref{sec:multiobjective}\\
$r$ & Hypervolume reference point, $r_i<\Phi_i(x)\ \forall x,i$ & \Cref{def:hv}\\
$\lambda_d$ & $d$-dimensional Lebesgue measure & \Cref{def:hv}\\
$\mathcal{H}(P)$ & Hypervolume indicator of population $P$ & \Cref{def:hv}\\
$q_k^{\mathrm{HV}}$ & Hypervolume escape probability at level $k$ & \Cref{def:hv-qk}\\
$\mathcal{N}_\varepsilon^{\mathrm{HV}}$ & Hypervolume target set & \S\ref{sec:multiobjective}\\

\midrule
\multicolumn{3}{@{}l}{\textit{Numerical illustration (\Cref{sec:num-setup})}}\\
$n$ & Bit-string length; test space $\X=\{0,1\}^n$ & \Cref{sec:num-setup}\\
$\widehat\theta_{t,i}$ & Empirical frequency of bit $i$ over pools in memory & \Cref{sec:num-setup}\\
$\theta_{t,i}$ & Floored marginal $\alpha'+(1-2\alpha')\widehat\theta_{t,i}$ & \Cref{sec:num-setup}\\
$\alpha'$ & Per-coordinate exploration floor & \Cref{sec:num-setup}\\
$m^\star$ & Empirically optimal memory depth & \Cref{sec:num-memory}\\
$\sigma$ & Standard deviation of evaluation noise & \Cref{sec:num-memory}\\

\end{longtable}
\end{small}

\section*{Acknowlegdement}
The authors acknowledge the use of AI-based tools for language refinement of the manuscript and for assistance during software development. All AI-generated content was reviewed and verified by the authors, who take full responsibility for the final work.

\section*{Data and code availability}
\addcontentsline{toc}{section}{Data and code availability}
The numerical illustrations are fully reproducible.  The OneMax simulation,
the scripts that regenerate each figure's data, and the verification scripts
for the quantitative claims accompany the source, together with the
externalised figure data and the rendered figures.  No external data sets were
used.

\section*{Funding}
\addcontentsline{toc}{section}{Funding}
This work was supported by the Cluster of Excellence MATH+.

\section*{Competing interests}
\addcontentsline{toc}{section}{Competing interests}
The authors declare no competing interests.


\end{document}